\newtheorem{theorem}{Theorem}[section]
\newtheorem{proposition}[theorem]{Proposition}
\newtheorem{lemma}[theorem]{Lemma}
\newtheorem{corollary}[theorem]{Corollary}
\theoremstyle{definition}
\newtheorem{remark}[theorem]{Remark}
\newtheorem{step}{Step}
\theoremstyle{remark}
\newcommand{\R}{\mathbb{R}}
\newcommand{\N}{\mathbb{N}}
\renewcommand{\epsilon}{\varepsilon}
\newcommand{\MNg}[1]{}%
\newcommand{\cC}{\mathcal{C}}
\newcommand{\cJ}{\mathcal{J}}
\newcommand{\cN}{\mathcal{N}}
\newcommand{\cP}{\mathcal{P}}
\newcommand{\cL}{\mathcal{L}}
\newcommand{\X}{\mathcal{X}}
\newcommand{\Y}{\mathcal{Y}}
\newcommand{\as}{\mbox{-a.s.}}
\newcommand{\Xcpt}{\mathcal{X}_{\mathrm{cpt}}}
\newcommand{\Ycpt}{\mathcal{Y}_{\mathrm{cpt}}}
\newcommand{\eps}{\varepsilon}
\newcommand{\qandq}{\quad\mbox{and}\quad}
\newcommand{\qforallq}{\quad\mbox{for all}\quad}
\newcommand{\1}{\mathbf{1}}
\DeclareMathOperator*{\argmin}{arg\, min}
\numberwithin{equation}{section}
\begin{document}

\author{Marcel Nutz}
\thanks{MN acknowledges support by an Alfred P.\ Sloan Fellowship and NSF Grants DMS-1812661,  DMS-2106056.}
\address[MN]{Departments of Statistics and Mathematics, Columbia University,  1255 Amsterdam Avenue, New York, NY 10027, USA}
\email{mnutz@columbia.edu}

\author{Johannes Wiesel}
\address[JW]{Department of Statistics, Columbia University, 1255 Amsterdam Avenue,
New York, NY 10027, USA}
\email{johannes.wiesel@columbia.edu}

\title[Stability of Schr\"odinger Potentials]{Stability of Schr\"odinger Potentials\\and Convergence of Sinkhorn's Algorithm}

\date{\today}

\begin{abstract}
  We study the stability of  entropically regularized optimal transport  with respect to the marginals. Given marginals converging weakly, we establish a strong convergence for the Schr\"odinger potentials describing the density of the optimal couplings. When the  marginals converge in total variation, the optimal couplings also converge in total variation. This is applied to show that Sinkhorn's algorithm converges in total variation when costs are quadratic and marginals are subgaussian, or more generally for all continuous costs satisfying an integrability condition.
\end{abstract}

\keywords{Optimal Transport; Entropic Regularization; Schr\"odinger potentials; Sinkhorn's Algorithm; IPFP}
\subjclass[2010]{90C25; 49N05}

\maketitle

\section{Introduction}\label{se:intro}

Let $(\X,\mu)$ and $(\Y,\nu)$ be Polish probability spaces and $\Pi(\mu,\nu)$ the set of all couplings; that is, probability measures~$\pi$ on~$\X\times\Y$ with marginals~$(\mu,\nu)$. Moreover, let $c:\X\times\Y\to\R_{+}$ be continuous. The entropic optimal transport problem with regularization parameter~$\eps\in(0,\infty)$ is
\begin{equation}\label{eq:epsOT}
  \cC_{\eps}(\mu,\nu):=\inf_{\pi\in\Pi(\mu,\nu)} \int_{\X\times\Y} c(x,y) \, \pi(dx,dy) + \eps H(\pi|\mu\otimes\nu), 
\end{equation}
where $H(\,\cdot\,|\mu\otimes\nu)$ denotes relative entropy with respect to the product of the marginals,
$$
  H(\pi|\mu\otimes\nu):=\begin{cases}
\int \log \frac{d\pi}{d(\mu\otimes\nu)} \,d\pi, & \pi\ll \mu\otimes\nu,\\
\infty, & \pi\not\ll \mu\otimes\nu.
\end{cases} 
$$

Entropic optimal transport traces back to the Schr\"odinger bridge problem associated with Schr\"odinger's thought experiment~\cite{Schrodinger.31} on the most likely evolution of a particle system, see~\cite{Follmer.88, Leonard.14} for surveys. More recently, popularized by~\cite{Cuturi.13}, the problem has received immense interest as an approximation of the (unregularized) Monge--Kantorovich optimal transport problem corresponding to $\eps=0$, especially for computing the 2-Wasserstein distance in high-dimensional applications such as machine learning, statistics, image and language processing (e.g., \cite{AlvarezJaakkola.18, WGAN.17,ChernozhukovEtAl.17,RubnerTomasiGuibas.00}). As a result, the cost of principal interest  is~$c(x,y)=\|x-y\|^{2}$ on~$\R^{d}\times\R^{d}$ and the convergence properties as $\eps\to0$ have been studied in detail; see \cite{
Berman.20, BlanchetJambulapatiKentSidford.18, ChenGeorgiouPavon.16, CominettiSanMartin.94, ConfortiTamanini.19, GigliTamanini.21, Leonard.12, Mikami.02, Mikami.04, NutzWiesel.21, Pal.19, Weed.18}, among others.
The main appeal of~\eqref{eq:epsOT} in this computational context is that it can be solved efficiently and at large scale by Sinkhorn's algorithm; see \cite{PeyreCuturi.19} and its numerous references. The algorithm is initialized  at the probability measure $\pi_{-1}\propto e^{-c/\eps}d(\mu\otimes\nu)$ and  its iterates are defined for $t\geq0$ by
\begin{align}\label{eq:SinkPrimalImplicit}
    \pi_{2t} := \argmin_{\Pi(\ast,\nu)} H(\,\cdot\,|\pi_{2t-1}), \qquad
    \pi_{2t+1} := \argmin_{\Pi(\mu,\ast)} H(\,\cdot\,|\pi_{2t}), 
\end{align}
where $\Pi(\ast,\nu)$ is the set of measures on $\X\times\Y$ with second marginal~$\nu$ (and arbitrary first marginal), and $\Pi(\mu,\ast)$ is defined analogously. The algorithm  alternatingly ``fits'' the marginals~$\mu$ and~$\nu$, hence is also called iterative proportional fitting procedure (IPFP). The argmin in~\eqref{eq:SinkPrimalImplicit} can be solved explicitly, and then each step of the algorithm only requires an explicit integration against~$e^{-c/\eps}$, cf.\ Section~\ref{se:sink}.
The algorithm dates back as far as~\cite{DemingStephan.40} and its convergence properties are well studied when the cost~$c$ is bounded: the convergence of $\pi_{n}$ to the solution~$\pi_{*}$ of~\eqref{eq:epsOT} holds in total variation (and in relative entropy),  see \cite{Carlier.21, ChenGeorgiouPavon.16, FranklinLorenz.89, IrelandKullback.68,Kullback.68,Ruschendorf.95,Sinkhorn.64,SinkhornKnopp.67}, among others. More precisely, \cite{Ruschendorf.95} relaxed the boundedness condition, but introduced several other conditions, including one (see~(B1) in~\cite{Ruschendorf.95}) that essentially forces $c$  to be bounded from above in one variable and thus excludes quadratic cost with unbounded marginal supports.

One main result of this paper is the convergence $\|\pi_{n}-\pi_{*}\|_{TV}\to0$ of Sinkhorn's algorithm~\eqref{eq:SinkPrimalImplicit} for quadratic cost and arbitrary subgaussian marginals. More generally, our result (see Corollary~\ref{co:sink}) holds for the continuous cost~$c$ and marginals~$(\mu,\nu)$ as soon  as
  \begin{equation}\label{eq:sinkExpCondIntro}
    \mbox{$e^{\beta c}\in L^{1}(\mu\otimes\nu)$ \;for some $\beta>0$}.
  \end{equation}
A fairly elementary proof of the  convergence $\|\pi_{n}-\pi_{*}\|_{TV}\to0$, following the same idea of weak-star compactness as~\cite{Ruschendorf.95}, was recently given in~\cite{Nutz.20} under the condition that~\eqref{eq:sinkExpCondIntro} holds for some $\beta>\eps^{-1}$. We emphasize that this condition  does not capture the regime of principal interest: when approximating the 2-Wasserstein distance and marginals are standard Gaussians, say, this condition forces $\eps>1/2$, but the approximation often requires~$\eps$ to be several orders of magnitude smaller (e.g., \cite{PeyreCuturi.19}). While the case $\beta>\eps^{-1}$ is broadly similar to the case of bounded cost, it seems that the regime $\beta<\eps^{-1}$  requires a fundamentally different line of attack---which brings  us to the main theorem of this paper.

In all that follows, we focus on $\eps=1$ in~\eqref{eq:epsOT} for notational simplicity; the general case is easily retrieved by replacing~$c$ with~$c/\eps$. Assuming that
\begin{equation}\label{eq:EOT}
  \cC(\mu,\nu):=\inf_{\pi\in\Pi(\mu,\nu)} \int_{\X\times\Y} c(x,y) \, \pi(dx,dy) + H(\pi|\mu\otimes\nu)
\end{equation}
is finite, there is a unique solution $\pi_{*}\in\Pi(\mu,\nu)$, and~$\pi_{*}$ is uniquely characterized within $\Pi(\mu,\nu)$ by having a density of the form 
  $$
    \frac{d\pi_{*}}{d(\mu\otimes\nu)} = e^{f\oplus g -c }\quad \mu\otimes\nu\as
  $$
for some measurable functions $f: \X\to\R$ and $g: \Y\to\R$, where we write $f\oplus g$ for $(x,y)\mapsto f(x)+g(y)$. The functions $f,g$ are called  called \emph{(Schr\"odinger) potentials} and they are integrable (under~$\mu$ and~$\nu$, respectively) in the cases relevant below; a sufficient condition for integrability is~$c\in L^{1}(\mu\otimes\nu)$. The sum $f\oplus g$ is uniquely determined $\mu\otimes\nu$-a.s., whereas the individual functions are unique up to an additive constant: clearly $(f+a,g-a)$ have the same sum for any $a\in\R$. Thus, they are unique after choosing a normalization removing this degree of freedom. See Appendix~\ref{se:background} for the preceding facts and further background.

Our aim is to establish stability of the potentials with respect to the marginals. Consider sequences $\mu_{n}\to \mu$ and $\nu_{n}\to\nu$ of marginals converging weakly (i.e., in the topology induced by bounded  continuous functions). Denoting by $(f_{n},g_{n})$ associated potentials, we want to state that $(f_{n},g_{n})\to (f,g)$ in a suitable sense. In view of the above characterization, a key step in this endeavor is to establish a form of compactness.
In general, it is not straightforward how to formalize the convergence of potentials. E.g., in a computational context, we may be interested in discrete measures $(\mu_{n},\nu_{n})$ approximating a continuous pair $(\mu,\nu)$. Then, these measures are  mutually singular and the spaces $L^{p}(\mu_{n})$ and $L^{p}(\mu)$ are not immediately comparable.  If $\mu\ll\mu_{n}$ (and similarly for $\nu_{n}$), this issue is milder as convergence in $\mu$-probability yields a natural topology. However, compactness still turns out to be an issue in the regime of interest. If~$c$ has high integrability, the weak-star topology in $L^{1}$ can be used, similarly to the arguments for Sinkhorn convergence in~\cite{Ruschendorf.95}. In some cases one can even use the Arzel\`a--Ascoli theorem (see Appendix~\ref{se:unifStability}).  But in the regime of interest here, where we want to cover quadratic cost and subgaussian marginals, we have not succeeded with off-the-shelf compactness concepts. 

Instead, we shall build compactness through an approximation scheme  and properties specific to the problem at hand, eventually using compactness of bounded sets in Euclidean space. This construction is the main technical contribution of the paper. It will also allow us to cover the case of mutually singular measures (in fact, focusing on equivalent measures would not result in a substantial simplification).
The approximation scheme has the form
\[
\begin{tikzcd}[column sep=3em]
F_{n}^{k} \arrow{r}{n\to\infty } \arrow[swap]{d}{k\to\infty} & F^{k} \arrow{d}{k\to\infty } \\%
f_{n} \arrow[dashed]{r} & f
\end{tikzcd}
\]
where for fixed~$n$, the potential $f_{n}$ is approximated by a sequence $(F_{n}^{k})_{k\in\N}$ converging in $\mu_{n}$-probability, with some additional uniformity in~$n$. The functions $F_{n}^{k}$ are piecewise constant;  more precisely, they are  simple functions based on a partition $(D^{k}_{j})_{j\in\N}$ of~$\X$ and have nonzero values on finitely many sets $D^{k}_{j}$. Using specific regularity properties of the potentials, these sets can be chosen independently of~$n$. Therefore, $(F_{n}^{k})_{n\in\N}$ can be identified with a sequence in a finite-dimensional space and compactness can be established from a priori bounds. Passing to a subsequence, this results in the (uniform) convergence $F_{n}^{k}\to F^{k}$ at the top of the diagram. The limiting functions $F^{k}$, in turn, are shown to form a Cauchy sequence in $\mu$-probability, thus yielding a limit~$f$ that is well-defined under~$\mu$. A similar construction is applied to $(g_{n})$, yielding a function $g$, and we shall prove that~$(f,g)$ are indeed potentials for the limiting marginals $(\mu,\nu)$.

The scheme in the diagram also acts as a way to formalize a strong convergence $f_{n}\to f$. It implies
convergence in distribution; that is, $(f_{n})_{\#}\mu_{n}\to f_{\#}\mu$ weakly, where $f_{\#}$ denotes the pushforward under $f$. Convergence in distribution is a natural notion given the weak convergence setting, but it is far from strong enough to imply the desired conclusions. If $\mu\ll\mu_{n}$, we show that our scheme implies the convergence in $\mu$-probability (and similarly for $\nu$) under a fairly general condition on the Radon--Nikodym derivatives; cf.\ Corollary~\ref{cor:abs_cont}. This condition is satisfied in particular whenever the marginals convergence in total variation, allowing us to deduce via Scheff\'e's lemma a result of its own interest (Corollary~\ref{co:TVstability}): the optimal couplings are stable in total variation; i.e., for marginals with 
$\|\mu_{n}-\mu\|_{TV}\to0$ and $\|\nu_{n}-\nu\|_{TV}\to0$, the corresponding  optimizers satisfy $\|\pi_{n}-\pi_{*}\|_{TV}\to0$.
Returning to the convergence of Sinkhorn's algorithm, we interpret each iteration of the algorithm as solving an entropic optimal transport problem with changing marginals $(\mu_{n},\nu_{n})$. These marginals converge in total variation to $(\mu,\nu)$, and we infer the convergence of the algorithm to the desired limit~$\pi_{*}$.

Several recent works have addressed the stability of entropic optimal transport from different angles. The first result is in~\cite{CarlierLaborde.20}, for a setting with bounded cost and marginals equivalent to a common reference measure with densities uniformly bounded above and below. The authors show by a differential approach that the potentials are continuous  in~$L^{p}$ relative to the marginal densities. %
Still with bounded cost (and some other conditions), \cite{DeligiannidisDeBortoliDoucet.21} establishes uniform continuity of the potentials relative to the marginals in Wasserstein distance~$W_{1}$; this result is based on the Hilbert--Birkhoff projective metric. 
Closer to the present unbounded setting, \cite{GhosalNutzBernton.21b} obtains stability of the optimal couplings in weak convergence for general continuous costs. Based on the geometric approach first proposed in~\cite{BerntonGhosalNutz.21}, the main restriction of the technique is that the underlying spaces need  to satisfy Lebesgue's theorem on differentiation of measures which generally holds only in finite-dimensional spaces. 
As a by-product, the main result of the present paper yields a similar stability result for weak convergence; cf.\ Theorem~\ref{thm:main}\,(i). The present result also applies  in an infinite-dimensional context; the more important difference, however,  is that we achieve a strong form of convergence, whereas~\cite{GhosalNutzBernton.21b} is silent about any convergence of the densities or potentials. In particular, we can infer stability in the sense of total variation convergence (Corollary~\ref{co:TVstability}) and the  corresponding convergence of Sinkhorn's algorithm  (Corollary~\ref{co:sink}). 
It is worth noting  that these two stabilities are at opposites ends of the spectrum: in the weak topology, compactness for sets of couplings is immediate; the difficulty  is to ensure that a  limit is the optimal coupling  for its marginals. For a limit in total variation, the latter is easy, but obtaining compactness is difficult due to the strength of the topology. In the present work, we effectively reduce the dimension by focusing on densities with a decomposition given by potentials and then obtain compactness through the potentials.
A last related work is~\cite{EcksteinNutz.21}, which was conducted concurrently. Here stability of the coupling in Wasserstein distance~$W_{p}$ is shown  under certain growth and integrability conditions. Obtained by control-theoretic arguments through a transport inequality, the main strength of this result lies in being quantitative (which the present one is not). On the other hand, \cite{EcksteinNutz.21} is once again silent about the densities or potentials, and does not yield a convergence in total variation. Indeed, we are not aware of previous stability results in total variation beyond bounded settings. Finally, we would like to mention the ongoing research~\cite{ChiariniConfortiGrecoTamanini.21} kindly pointed out to us by Giovanni Conforti. In the setting of dynamic Schr\"odinger bridges satisfying a logarithmic Sobolev inequality for the underlying dynamics and marginal distributions with finite Fisher information, the authors study quantitative bounds for the relative entropy of Schr\"odinger bridges with different marginals and the convergence of the gradients of the potentials towards the Brenier map as~$\eps\to0$.

The remainder of this paper is organized as follows. Section~\ref{se:mainResults} contains the main results on stability. Section~\ref{se:sink} details the application to Sinkhorn's algorithm. The proof of the main result, Theorem~\ref{thm:main}, is split into ten~steps which are reported in Section~\ref{se:mainProof}. For convenience, Appendix~\ref{se:background} summarizes background on entropic optimal transport. Appendix~\ref{se:unifStability} details how stability, even uniformly on compacts, can be obtained rather directly under strong integrability conditions. Lastly, Appendix~\ref{se:omittedProofs} contains some proofs that we defer in the body of the text.

\section{Stability}\label{se:mainResults}

Let $\X,\Y$ be Polish spaces endowed with their Borel $\sigma$-fields and $\cP(\X),\cP(\Y)$  their sets of Borel probability measures. We recall that $c:\X\times \Y\to [0,\infty)$ is continuous. In Theorem~\ref{thm:main} below, we consider the entropic optimal transport problem~\eqref{eq:EOT} for marginals $(\mu_{n},\nu_{n})\in\cP(\X)\times\cP(\Y)$ converging to marginals $(\mu,\nu)$. The condition~\eqref{eq:posBddL1Cond} of the theorem implies that $\cC(\mu_{n},\nu_{n})<\infty$ and that there exist optimal couplings $\pi_{n}\in\Pi(\mu_{n},\nu_{n})$ with associated potentials $(f _{n},g_{n})$;  see Section~\ref{se:background} for these facts and further background.

Before stating the theorem, let us comment on the normalization chosen therein.
As mentioned in the Introduction, $f _{n}$ and $g_{n}$ are only unique up to an additive constant. This does not affect the sum $f_{n}\oplus g_{n}$ determining the density of~$\pi_{n}$, but in order to obtain a separate convergence for $f_{n}$ and $g_{n}$, it is clearly necessarily to impose an additional condition to  pin down this constant. There are many possible choices; in Theorem~\ref{thm:main}, we work with $\alpha_{n}:=\int \arctan(f_n)\,d\mu_n$. As $\arctan$ is strictly increasing, fixing the value of the integral is equivalent to determining the additive constant (and conversely, there is a version of the potentials such that, e.g., $\alpha_{n}=0$). Since $\arctan$ is bounded, it is clear that $(\alpha_{n})$ always converges after passing to subsequence. Furthermore, this type of normalization is compatible with convergence in distribution. %

\begin{theorem}\label{thm:main}
For $n\in \N$, let $(\mu_n, \nu_n)\in \mathcal{P}(\mathcal{X})\times \mathcal{P}(\mathcal{Y})$ satisfy $\cC(\mu_{n},\nu_{n})<\infty$ and let $(f_{n},g_{n})$ be corresponding potentials. Suppose that %
\begin{align}\label{eq:posBddL1Cond}
\sup_{n\in\N} \int f^+_n\,d\mu_n<\infty, \qquad \sup_{n\in\N}\int g^+_n\,d\nu_n<\infty 
\end{align}
and that $\mu_n,\nu_n$ converge weakly to~$\mu,\nu$. Then
\begin{enumerate}[labelindent=\parindent, labelindent=0pt, leftmargin=*, label=(\roman*), 
align=left,  topsep=6pt, itemsep=6pt]
\item[(i)] 
$\cC(\mu,\nu)<\infty$ and the optimal couplings converge weakly: $\pi_{n}\to\pi_{*}$.
\end{enumerate}
\begin{enumerate}[labelindent=\parindent, labelindent=0pt, leftmargin=*, label=(\roman*), 
align=left,  topsep=6pt, itemsep=6pt]
\item[(ii)]
Suppose also that the normalizations $\alpha_{n}:=\int \arctan(f_n)\,d\mu_n$ converge to a limit~$\alpha$ (this always holds along a subsequence).
Let $(\eps_k)_{k\in\N}\subset (0,1)$ satisfy $\eps_k \downarrow0$. There are measurable functions 
$$
F_{n}^{k},F^{k},f:\X\to\R,  \qquad G_{n}^{k},G^{k},g:\Y\to\R, \qquad k,n\in\N
$$
and a subsequence $(n_{l})_{l\in\N}\subset\N$ such that
\begin{align}
&\mbox{$(f,g)\in L^{1}(\mu)\times L^{1}(\nu)$ are the potentials for $(\mu,\nu)$ with $\int \arctan(f)\,d\mu=\alpha$}, \label{eq:limitingPotentials}\\
&\mbox{$(f_{n},\mu_{n}) \to (f,\mu)$ in distribution; that is, $(f_{n})_{\#}\mu_{n}\to f_{\#}\mu$,} \label{eq:claimConvInDistrib}\\
&\lim_{k\to\infty} \sup_{n\in \N} \mu_n( |f_n-F^k_n|\ge \eps_{k})=0, \label{eq:claim2} \\
& \lim_{l\to\infty} F_{n_{l}}^k =F^k \; \mbox{ uniformly, \; for each $k\in\N$,} \label{eq:claim3} \\
&\lim_{k\to\infty} F^k = f \;\mbox{ in $\mu$-probability,}  \label{eq:claim1}
\end{align}
and analogous properties hold for $G_{n}^{k},G^{k},g$. 

\item[(iii)] 
Suppose that $(f^{+}_{n},g^{+}_{n})_{n\in\N}$ are uniformly integrable wrt.\ $(\mu_{n},\nu_{n})_{n\in\N}$\textup{;} that is,
\begin{align}\label{eq:uniform_int}
\lim_{C\to \infty}\sup_{n\in \N} \int f_n \1_{f_n>C}\,d\mu_n=0, \qquad \lim_{C\to \infty}\sup_{n\in \N} \int g_n \1_{g_n>C}\,d\nu_n=0.
\end{align}
Then the optimal values converge: $\cC(\mu_{n},\nu_{n})\to\cC(\mu,\nu)$. If $\alpha_{n},\alpha,f,g$ are as in~(ii), then 
\begin{align}\label{eq:meansConverge}
   \lim_{n\to\infty} \int f_n\,d\mu_n =\int f\,d\mu, \qquad \lim_{n\to\infty} \int g_n\,d\nu_n =\int g\,d\nu.
\end{align}
\end{enumerate}
\end{theorem}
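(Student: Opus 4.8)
The plan is to build the approximating simple functions $F_n^k$ first, establish their uniform a priori bounds in finite-dimensional space, extract the convergent subsequence, and only then identify the limit as the correct potential. The key input that makes the whole scheme work is a regularity property of the potentials: because $\frac{d\pi_n}{d(\mu_n\otimes\nu_n)}=e^{f_n\oplus g_n-c}$ integrates to $1$ against $\mu_n\otimes\nu_n$, disintegrating in the $y$-variable gives, for $\mu_n$-a.e.\ $x$, the identity $1=\int e^{f_n(x)+g_n(y)-c(x,y)}\,\nu_n(dy)$, i.e.\ $f_n(x)=-\log\int e^{g_n(y)-c(x,y)}\,\nu_n(dy)$ (and symmetrically for $g_n$). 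This $(c,\nu_n)$-transform representation is what I would use to transfer integrability of $g_n^+$ and continuity of $c$ into \emph{equicontinuity-type} control on $f_n$ along a well-chosen countable grid of reference points, which in turn lets me choose the partition $(D_j^k)$ of $\X$ independently of $n$.

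\textbf{Step-by-step.} (a) Fix a countable dense set in $\X$ and in $\Y$; using \eqref{eq:posBddL1Cond} and the $(c,\nu_n)$-transform, derive pointwise a priori bounds $|f_n(x)|\le \phi(x)$, $|g_n(y)|\le\psi(y)$ on these grids for some locally bounded $\phi,\psi$ not depending on $n$ (one direction of the bound is the transform formula; the other uses that $\int f_n^+\,d\mu_n$ and $\int g_n^+\,d\nu_n$ are bounded together with the normalization $\alpha_n$ via a Markov/chaining argument). (b) For each $k$, partition $\X$ into countably many pieces $D_j^k$ of diameter $\le\eps_k$ (say along the grid), set $F_n^k:=\sum_j f_n(x_j^k)\1_{D_j^k}$ for grid centers $x_j^k$, and similarly $G_n^k$; truncate to finitely many $j$ using the a priori bounds and tightness of $(\mu_n)$ to get \eqref{eq:claim2} with uniformity in $n$ (here continuity of $f_n$ on each small cell, again via the transform, controls $|f_n-F_n^k|$). (c) Having fixed the finitely many cells, $(F_n^k)_n$ lives in a bounded subset of $\R^{m_k}$, so a diagonal extraction over $k$ produces a subsequence $(n_l)$ with $F_{n_l}^k\to F^k$ uniformly for every $k$, which is \eqref{eq:claim3}; the normalizations $\alpha_n$ pass to the limit along the way. (d) Show $(F^k)_k$ is Cauchy in $\mu$-probability: combine \eqref{eq:claim2}, \eqref{eq:claim3} and the weak convergence $\mu_{n_l}\to\mu$ to compare $F^k$ and $F^{k'}$ through the $F_{n_l}$'s, obtaining the limit $f$ in $\mu$-probability, which is \eqref{eq:claim1}; likewise $g$. (e) Identify $(f,g)$: pass to the limit in $1=\int e^{f_{n_l}(x)+g_{n_l}(y)-c(x,y)}\,\nu_{n_l}(dy)$ and in $\int f_{n_l}\oplus g_{n_l}\,d\pi_{n_l}$-type quantities to check $e^{f\oplus g-c}$ is a probability density w.r.t.\ $\mu\otimes\nu$ with the right marginals, hence by uniqueness of potentials it \emph{is} the potential pair for $(\mu,\nu)$, and the normalization $\int\arctan(f)\,d\mu=\alpha$ survives because $\arctan$ is bounded and continuous; this gives \eqref{eq:limitingPotentials} and, by disintegration, the weak convergence $\pi_{n_l}\to\pi_*$, which upgrades to full-sequence convergence by the usual subsequence argument since $\pi_*$ is unique — this is (i). Statement \eqref{eq:claimConvInDistrib} then follows from \eqref{eq:claim2}--\eqref{eq:claim1} plus $\mu_{n_l}\to\mu$ weakly. (f) For (iii), under the uniform integrability \eqref{eq:uniform_int} upgrade the convergence of $\int\arctan(f_n)\,d\mu_n$ to $\int f_n\,d\mu_n\to\int f\,d\mu$ (and symmetrically) by splitting at level $C$, using \eqref{eq:claimConvInDistrib} on the truncated part and \eqref{eq:uniform_int} on the tail; then $\cC(\mu_n,\nu_n)=\int c\,d\pi_n+H(\pi_n|\mu_n\otimes\nu_n)=\int (f_n\oplus g_n)\,d\pi_n$ (since $H=\int\log\frac{d\pi_n}{d(\mu_n\otimes\nu_n)}\,d\pi_n=\int(f_n\oplus g_n-c)\,d\pi_n$) $=\int f_n\,d\mu_n+\int g_n\,d\nu_n$, so \eqref{eq:meansConverge} immediately yields $\cC(\mu_n,\nu_n)\to\cC(\mu,\nu)$.

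\textbf{Main obstacle.} The crux is Step (a)--(b): obtaining a priori bounds on $f_n$ and a \emph{uniform-in-$n$} modulus that legitimizes replacing $f_n$ by a fixed-partition simple function. The boundedness of $\int f_n^+\,d\mu_n$ alone is far too weak (it controls $f_n$ only in an $L^1$-averaged, one-sided sense), so everything hinges on exploiting the $(c,\nu_n)$-transform structure: it converts the integrability $e^{\beta c}\in L^1$-type hypotheses and the one-sided bounds into genuine pointwise, locally uniform estimates, and it transmits the continuity of $c$ to $f_n$. Making this quantitative while keeping the partition independent of $n$ — in particular handling the interaction between the (possibly) unbounded supports, the tails of $\mu_n$, and the growth of $c$ — is where the real work lies; everything downstream (compactness in $\R^{m_k}$, the Cauchy property, the limit identification) is then comparatively routine diagonal-extraction and weak-convergence bookkeeping.
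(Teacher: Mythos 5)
Your overall architecture coincides with the paper's: simple-function approximation on a partition chosen independently of $n$, finite-dimensional compactness with a diagonal extraction, a Cauchy-in-probability argument for $(F^k)$, and identification of the limit via the verification/duality result. The crux, however, is your step (a)--(b), and the specific claim you make there fails in exactly the regime the theorem targets. The Schr\"odinger equation $e^{-f_n(x)}=\int e^{g_n(y)-c(x,y)}\,\nu_n(dy)$ gives a pointwise \emph{upper} bound on $f_n(x)$ (Jensen, using that $\int g_n\,d\nu_n$ is bounded below), but a pointwise \emph{lower} bound at a prescribed grid point would require an upper bound on $\int e^{g_n(y)-c(x,y)}\,\nu_n(dy)$, i.e.\ essentially $\sup_n\int e^{g_n}\,d\nu_n<\infty$ --- the strong integrability hypothesis of Proposition~\ref{pr:unifStability}, not available under \eqref{eq:posBddL1Cond} alone. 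A Markov argument from $\sup_n\int|f_n|\,d\mu_n<\infty$ controls $f_n$ only on sets of large $\mu_n$-measure that \emph{depend on $n$}; a fixed dense grid point can lie in the exceptional set for infinitely many $n$. Likewise $f_n$ has no uniform modulus on the cells: the oscillation estimate (Lemma~\ref{lem:ext}) holds only on the $n$-dependent sets $A_n^k$, and only up to an additive error $-\log(1-\delta)$. This is precisely why the paper chooses $n$-dependent representatives $x_{j,n}^k\in D_j^k\cap A_n^k$ and passes through a Kirszbraun-type extension $f_n^k$ before running the finite-dimensional compactness argument; without some such device your construction of $F_n^k$ and the uniform bound on its coefficients do not go through.

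Two further steps are materially incomplete. In (e), ``passing to the limit'' in the Schr\"odinger equation is the hardest part of the whole proof (Step~\ref{st:step8} in the paper): one must truncate the potentials at a level $C$, show the truncation error is small uniformly in $n$ (Lemma~\ref{lem:bounded2}, which rests on \eqref{eq:bddL1Cond} and the fact that the $\pi_n$ are probability measures), compare the truncated densities through the simple functions $F_n^k,G_n^k$, and rule out by contradiction that the candidate measure $e^{f\oplus g-c}\,d(\mu\otimes\nu)$ has infinite mass; none of this is routine weak-convergence bookkeeping, and your outline does not indicate how the limit passage is justified. In (f), the truncation argument fails for the negative parts: \eqref{eq:uniform_int} controls only $f_n\1_{f_n>C}$, so convergence in distribution yields only the upper semicontinuity $\limsup_n\int f_n\,d\mu_n\le\int f\,d\mu$. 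The matching lower bound is obtained in the paper for the \emph{sum} $\int f_n\,d\mu_n+\int g_n\,d\nu_n=\cC(\mu_n,\nu_n)$ via duality and lower semicontinuity of the primal value (Step~\ref{st:step10}); only combining the two one-sided estimates gives \eqref{eq:meansConverge}. In particular the logical order in (iii) is the reverse of what you propose: \eqref{eq:meansConverge} does not precede the convergence of $\cC(\mu_n,\nu_n)$, it is deduced together with it.
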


The next two results discuss the main condition~\eqref{eq:posBddL1Cond} of the theorem.

\begin{remark}\label{rk:bddL1Cond}
Condition~\eqref{eq:posBddL1Cond} implies 
\begin{align}\label{eq:negBddL1Cond}
 \inf_{n\in\N} \int f_n\,d\mu_n>-\infty, \qquad \inf_{n\in\N} \int g_n\,d\nu_n>-\infty,
\end{align}
and thus~\eqref{eq:posBddL1Cond} is equivalent to boundedness in $L^{1}$:
\begin{align}\label{eq:bddL1Cond}
\sup_{n\in\N} \int |f_n|\,d\mu_n<\infty, \qquad \sup_{n\in\N}\int |g_n|\,d\nu_n <\infty.
\end{align}
\end{remark}

Remark~\ref{rk:bddL1Cond} follows from the duality $\int f_n\,d\mu_n+\int g_n\,d\nu_n=\cC(\mu_{n},\nu_{n})\geq0$; cf.\ Proposition~\ref{pr:duality}.
We can provide a sufficient condition for~\eqref{eq:posBddL1Cond} in terms of the given data as follows.

\begin{lemma}\phantomsection\label{le:suff_cond_for_bddL1} 
\begin{enumerate}[labelindent=\parindent, labelindent=0pt, leftmargin=*, label=(\roman*), 
align=left,  topsep=6pt, itemsep=6pt]
\item[(i)] Let $(f_{n},g_{n})$ satisfy~\eqref{eq:negBddL1Cond}. The following condition is sufficient for~\eqref{eq:posBddL1Cond} and~\eqref{eq:bddL1Cond}\textup{:}
\begin{align}\label{eq:condition_cost_int}
\sup_{n\in\N} \int c\,d (\mu_n\otimes \nu_n)<\infty.
\end{align}
If $c$ is uniformly integrable wrt.\ $(\mu_n\otimes \nu_n)_{n\in\N}$, then~\eqref{eq:uniform_int} holds as well.

\item[(ii)] The following condition is sufficient for~\eqref{eq:condition_cost_int}\textup{:} 
\begin{align}\label{eq:condition_entropy}
\sup_{n\in \N} \big[H(\mu_{n}|\mu)+H(\nu_{n}|\nu)\big]<\infty \qandq \mbox{$e^{\beta c}\in L^{1}(\mu\otimes\nu)$ \;for some $\beta>0$}.
\end{align}
If moreover $e^{\beta \phi(c)}\in L^{1}(\mu\otimes\nu)$ for an increasing, superlinearly growing function~$\phi$, then $c$ is uniformly integrable wrt.\ $(\mu_n\otimes \nu_n)_{n\in\N}$.
\end{enumerate}
\end{lemma}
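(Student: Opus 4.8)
The plan is to derive both parts from two standard ingredients: the Schr\"odinger fixed-point equations satisfied by the potentials, and the Gibbs variational principle (Donsker--Varadhan formula) $\int h\,dQ\le H(Q\,|\,P)+\log\int e^{h}\,dP$, valid whenever $\int e^{h}\,dP<\infty$.

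\emph{Part (i).} I would start from the marginal constraints on the optimal coupling. Writing $d\pi_n/d(\mu_n\otimes\nu_n)=e^{f_n\oplus g_n-c}$ and integrating out the $\Y$-variable, the requirement that the first marginal be $\mu_n$ forces $\int_{\Y}e^{f_n(x)+g_n(y)-c(x,y)}\,\nu_n(dy)=1$ for $\mu_n$-a.e.\ $x$, i.e.\ $f_n(x)=-\log\int_{\Y}e^{g_n(y)-c(x,y)}\,\nu_n(dy)$. Applying Jensen's inequality to the concave logarithm gives the pointwise bound
\[
f_n(x)\ \le\ \int_{\Y}c(x,y)\,\nu_n(dy)-\int g_n\,d\nu_n.
\]
Since $c\ge 0$ and, by~\eqref{eq:negBddL1Cond}, $M:=\sup_{n}\left(-\int g_n\,d\nu_n\right)^{+}<\infty$, this yields $f_n^{+}(x)\le \int_{\Y}c(x,y)\,\nu_n(dy)+M$; integrating in $x$ against $\mu_n$ bounds $\int f_n^{+}\,d\mu_n$ by $\int c\,d(\mu_n\otimes\nu_n)+M$. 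The symmetric estimate controls $\int g_n^{+}\,d\nu_n$, so~\eqref{eq:condition_cost_int} implies~\eqref{eq:posBddL1Cond}, and then~\eqref{eq:bddL1Cond} follows from Remark~\ref{rk:bddL1Cond}. For the uniform-integrability claim, observe that $h_n\colon x\mapsto \int_{\Y}c(x,y)\,\nu_n(dy)$ is a conditional expectation of $c$ under $\mu_n\otimes\nu_n$ (given the first coordinate), and its law under $\mu_n$ is the image of $\mu_n\otimes\nu_n$ under the projection to $\X$. Conditional expectations of a uniformly integrable family are uniformly integrable --- choose a single de la Vall\'ee Poussin function for the family $\{c\}$ and apply conditional Jensen --- so $\{h_n\}$ is uniformly integrable w.r.t.\ $\{\mu_n\}$. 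As $0\le f_n^{+}\le h_n+M$, the family $\{f_n^{+}\}$ is dominated by a uniformly integrable family and is therefore itself uniformly integrable w.r.t.\ $\{\mu_n\}$; this is the first half of~\eqref{eq:uniform_int}, and the second half is symmetric.

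\emph{Part (ii).} Using the additivity $H(\mu_n\otimes\nu_n\,|\,\mu\otimes\nu)=H(\mu_n|\mu)+H(\nu_n|\nu)$ and the Gibbs inequality with $P=\mu\otimes\nu$, $Q=\mu_n\otimes\nu_n$ and $h=\beta c$, I obtain
\[
\int c\,d(\mu_n\otimes\nu_n)\ \le\ \frac{1}{\beta}\left(H(\mu_n|\mu)+H(\nu_n|\nu)+\log\int e^{\beta c}\,d(\mu\otimes\nu)\right),
\]
whose right-hand side is bounded uniformly in $n$ under~\eqref{eq:condition_entropy}; this gives~\eqref{eq:condition_cost_int}. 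For the last assertion, we may assume $\phi(0)=0$ (replace $\phi$ by $\phi-\phi(0)$), so that $\phi\ge 0$ on $[0,\infty)$; the Gibbs inequality with $h=\beta\phi(c)$ then gives $\sup_{n}\int\beta\phi(c)\,d(\mu_n\otimes\nu_n)<\infty$. Since $\phi$ grows superlinearly, $G:=\beta\phi$ is nonnegative with $G(t)/t\to\infty$ as $t\to\infty$, so the sufficiency direction of the de la Vall\'ee Poussin criterion (which needs only this, not convexity of $G$) shows that $c$ is uniformly integrable w.r.t.\ $\{\mu_n\otimes\nu_n\}$; combined with part~(i), this yields~\eqref{eq:uniform_int}.

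I do not expect a serious obstacle. The only points requiring some care are the sign bookkeeping in the Jensen step of part~(i) and, in part~(ii), the observation that $G=\beta\phi$ is an admissible test function for de la Vall\'ee Poussin's criterion even though $\phi$ need not be convex. Since $c\ge 0$, all integrands appearing in the Gibbs inequality are nonnegative, so there are no well-definedness issues.
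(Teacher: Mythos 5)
Your proof is correct and follows essentially the same route as the paper's: the Schr\"odinger equation plus Jensen's inequality for part (i), and the Gibbs/Donsker--Varadhan variational formula together with the additivity of relative entropy over products for part (ii). The only cosmetic difference is in the uniform-integrability step of (i), where the paper uses the $\eps$--$\delta$ characterization directly (if $\mu_n(A)\le\delta$ then $(\mu_n\otimes\nu_n)(A\times\Y)\le\delta$) rather than your conditional-Jensen/de la Vall\'ee Poussin domination argument; both are routine and valid, as is your (correct) observation that the sufficiency direction of de la Vall\'ee Poussin does not require convexity of $\phi$.
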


The proof is deferred to Appendix~\ref{se:omittedProofs}. 
We remark that the assumption~\eqref{eq:negBddL1Cond} is mostly a matter of normalization. Indeed, suppose that $f_{n}^{+}\in L^{1}(\mu_{n})$ and $g_{n}^{+}\in L^{1}(\nu_{n})$---which necessarily holds under~\eqref{eq:condition_cost_int}, cf.\ Proposition~\ref{pr:existence}. Then $\int f_n\,d\mu_n+\int g_n\,d\nu_n=\cC(\mu_{n},\nu_{n})\geq0$ by duality (cf.\ Proposition~\ref{pr:duality}). Therefore, \eqref{eq:negBddL1Cond} always holds after choosing a suitable normalization for $(f_{n},g_{n})$, for instance the centering $\int f_n \,d\mu_n=0$.

In the remainder of the section, we discuss corollaries of Theorem~\ref{thm:main} that hold when the marginals $(\mu_{n},\nu_{n})$  have additional properties.
In the general setting of Theorem~\ref{thm:main}, there is no natural function space where one could formulate the convergence $f_{n}\to f$ in a straightforward way. Whereas if $\mu\ll \mu_n$ for all $n\in \N$, the potentials $(f_{n})$ are well-defined $\mu$-a.s.\ and one can naturally ask whether $f_{n}\to f$ in $\mu$-probability. The following gives an affirmative answer under a  weak condition of boundedness-in-probability on the Radon--Nikodym derivatives.

\begin{corollary}\label{cor:abs_cont}
Let~\eqref{eq:posBddL1Cond} hold and let $\mu_n,\nu_n$ converge weakly to~$\mu,\nu$. Suppose that $\mu\ll \mu_n$, $\nu\ll \nu_n$  and
\begin{align}\label{eq:bounded_prob}
 \limsup_{C\to\infty}\limsup_{n\to\infty} \mu\left( \frac{d\mu}{d\mu_n}\ge C\right) =0, \qquad
 \limsup_{C\to\infty}\limsup_{n\to\infty} \nu\left( \frac{d\nu}{d\nu_n}\ge C\right) =0.
\end{align}
Then $f_{n}\oplus g_{n}\to f\oplus g$ in $\mu\otimes\nu$-probability, where $(f,g)$ are arbitrary potentials for~$(\mu,\nu)$. If $\alpha_{n},\alpha,f,g$ are as in~Theorem~\ref{thm:main}\,(ii), then $f_n\to f$ in $\mu$-probability and $g_n\to g$ in $\nu$-probability.
\end{corollary}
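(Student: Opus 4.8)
The plan is to deduce Corollary~\ref{cor:abs_cont} from Theorem~\ref{thm:main}\,(ii) by combining the approximation scheme encoded in~\eqref{eq:claim2}--\eqref{eq:claim1} with the boundedness-in-probability hypothesis~\eqref{eq:bounded_prob}, which lets us transfer $\mu_n$-probability estimates to $\mu$-probability estimates. First I would invoke Theorem~\ref{thm:main}\,(ii): after passing to a subsequence, we obtain potentials $(f,g)$ for $(\mu,\nu)$ together with simple functions $F^k_n, F^k$ (and $G^k_n, G^k$) satisfying the three convergences. The target is $f_n\to f$ in $\mu$-probability, and the triangle inequality suggests splitting $|f_n-f|$ along the path $f_n \rightsquigarrow F^k_n \rightsquigarrow F^k \rightsquigarrow f$: for fixed $k$, control $\mu(|f_n-F^k_n|\ge\delta)$ using~\eqref{eq:claim2}, control $\mu(|F^k_n-F^k|\ge\delta)$ using the uniform convergence~\eqref{eq:claim3} (here uniform convergence is exactly what makes the $\mu$-measure irrelevant, since $\|F^k_n-F^k\|_\infty\to0$), and control $\mu(|F^k-f|\ge\delta)$ using~\eqref{eq:claim1}. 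Choosing $k$ large first, then $n$ large, closes the argument.

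The one genuine subtlety — and the step I expect to be the main obstacle — is that~\eqref{eq:claim2} gives a bound on $\mu_n(|f_n-F^k_n|\ge\eps_k)$, a measure under $\mu_n$, whereas we need the corresponding statement under $\mu$. This is precisely where~\eqref{eq:bounded_prob} enters. The plan is: for any Borel set $A$, write $\mu(A)=\int_A \frac{d\mu}{d\mu_n}\,d\mu_n$, split according to whether $\frac{d\mu}{d\mu_n}$ exceeds a threshold $C$ or not, bound $\mu\big(A\cap\{\frac{d\mu}{d\mu_n}<C\}\big)\le C\,\mu_n(A)$, and bound the remainder by $\mu\big(\frac{d\mu}{d\mu_n}\ge C\big)$. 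Applying this with $A=\{|f_n-F^k_n|\ge\eps_k\}$ and using~\eqref{eq:claim2} to make $\mu_n(A)$ uniformly small in $n$ for large $k$, while~\eqref{eq:bounded_prob} makes the remainder small after taking $\limsup_n$ and then $C\to\infty$, we obtain $\limsup_{k}\limsup_n \mu(|f_n-F^k_n|\ge\eps_k)=0$. The analogous argument handles $G$.

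Next I would treat the $f\oplus g$ statement, which is slightly stronger in that it does not require the normalization and in fact needs no passage to a subsequence in its conclusion. The point is that $f_n\oplus g_n$ is determined $\mu_n\otimes\nu_n$-a.s.\ independently of the normalization constant: shifting $(f_n,g_n)$ by $(\pm a_n)$ leaves the sum unchanged. So from the subsequential convergence $f_{n_l}\to f$ in $\mu$-probability and $g_{n_l}\to g$ in $\nu$-probability just established (for the specific normalization in Theorem~\ref{thm:main}\,(ii)), one gets $f_{n_l}\oplus g_{n_l}\to f\oplus g$ in $\mu\otimes\nu$-probability along that subsequence. To upgrade from subsequential convergence to full-sequence convergence, I would run the standard argument: any subsequence of $(\mu_n,\nu_n)$ still satisfies all hypotheses of the corollary, hence has a further subsequence along which $f_n\oplus g_n$ converges in $\mu\otimes\nu$-probability; since the density $e^{f\oplus g - c}$ of $\pi_*$ is uniquely determined $\mu\otimes\nu$-a.s., the limit $f\oplus g$ is the same (up to $\mu\otimes\nu$-null sets, hence as an element of $L^0$) for every such sub-subsequence, and the subsequence principle for convergence in probability yields convergence of the full sequence. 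Finally, for the $f_n\to f$ and $g_n\to g$ claims under the normalization, I would note that $\int\arctan(f_n)\,d\mu_n$ need not converge along the full sequence, but the same subsequence argument applies: every subsequence has a further subsequence along which Theorem~\ref{thm:main}\,(ii) applies with a (possibly subsequence-dependent) limit $\alpha$, but the resulting potential $f$ is pinned down by $\int\arctan f\,d\mu = \lim\alpha_n$ once we additionally assume, as in the corollary's hypothesis tying it to Theorem~\ref{thm:main}\,(ii), that $(\alpha_n)$ converges; then uniqueness of $f$ given its normalization forces the same limit for all sub-subsequences, and we conclude as before.
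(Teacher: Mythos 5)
Your proposal is correct and follows essentially the same route as the paper: the same triangle-inequality decomposition $f_n \rightsquigarrow F^k_n \rightsquigarrow F^k \rightsquigarrow f$, the same transfer inequality $\mu(A)\le C\mu_n(A)+\mu\bigl(\tfrac{d\mu}{d\mu_n}\ge C\bigr)$ applied to $A=\{|f_n-F^k_n|\ge\eps\}$ with the same order of limits, and the same subsequence-principle reduction (which the paper performs at the outset rather than at the end). No gaps.
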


\begin{proof}
 As $f\oplus g$ is uniquely determined and convergence in probability is metrizable, it suffices to show that any subsequence of $f_{n}\oplus g_{n}$ has a subsequence converging to~$f\oplus g$. Thus we may further assume that $\alpha_{n},\alpha,f,g$ are as in~Theorem~\ref{thm:main}\,(ii) and show the convergence of~$(f_n)$~and $(g_{n})$. Fix $\eps>0$ and a subsequence of $(f_n)$. By~\eqref{eq:claim1} and~\eqref{eq:claim3} in Theorem~\ref{thm:main}, there exist a further subsequence (not relabeled) and functions $F_{n}^k, F^k$ such that along this subsequence,
\begin{align}\label{eq:claim_n1}
\lim_{k\to \infty} \mu(|F^k-f|\ge \eps)&=0 \quad\qandq\quad \lim_{n\to \infty} \mu ( |F_n^k -F^k|\ge \eps)=0.
\end{align}
On the  other hand, for any $C>0$,
\begin{align*}
\mu( |f_n-F^k_n|\ge \eps)
& \leq \mu\left(  \frac{d\mu}{d\mu_n}\ge C \right) + \int \1_{\{ |f_n-F^k_n|\ge \eps\}\cap\{\frac{d\mu}{d\mu_n}\leq C\}} \frac{d\mu}{d\mu_n}\,d\mu_n\\
& \leq \mu\left(  \frac{d\mu}{d\mu_n}\ge C \right) + C\mu_n( |f_n-F^k_n|\ge \eps) 
\leq \mu\left(  \frac{d\mu}{d\mu_n}\ge C \right) + C \delta_{k}
\end{align*}
for some $\delta_{k}\geq0$ with $\lim_{k}\delta_{k}=0$, by~\eqref{eq:claim2}. %
Taking $n\to \infty$,  then $k\to \infty$ and finally $C\to \infty$, we obtain 
\begin{align*}
\limsup_{k\to\infty}\limsup_{n\to \infty} \mu( |f_n-F^k_n|\ge \eps)=0.
\end{align*}
Together with~\eqref{eq:claim_n1}, this yields
\begin{align*}
\lim_{n\to \infty} \mu(|f_n-f|\ge 3\eps)&\le  \limsup_{k\to \infty} \limsup_{n\to \infty} \mu( |f_n-F^k_n|\ge \eps ) \\
& \quad + \limsup_{k\to \infty} \limsup_{n\to \infty} \mu (  |F_n^k -F^k|\ge \eps)  \\
& \quad +\limsup_{k\to \infty} \mu(|F^k-f|\ge \eps)=0.
\end{align*}
The proof for $g$ is analogous.
\end{proof}

Condition~\eqref{eq:bounded_prob} holds in particular for sequences converging in total variation.

\begin{lemma}\label{le:TVconvImpliesCond}
  Suppose that $\mu\ll \mu_n$ and $\mu_{n}\to\mu$ in total variation. Then
$$
  \limsup_{C\to\infty}\limsup_{n\to\infty} \mu\left( \frac{d\mu}{d\mu_n}\ge C\right) =0.
$$
\end{lemma}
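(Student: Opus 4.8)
### Proof strategy for Lemma~\ref{le:TVconvImpliesCond}

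\textbf{Overview.} The plan is to exploit that total variation convergence is exactly $L^1$ convergence of the densities with respect to a dominating measure, and then combine this with the fact that, pointwise under $\mu$, the Radon--Nikodym derivatives $\tfrac{d\mu}{d\mu_n}$ cannot all be simultaneously large without $\mu_n$ losing mass. Write $\rho_n := \tfrac{d\mu}{d\mu_n}$. The event $\{\rho_n \ge C\}$ should have small $\mu$-measure when $n$ is large, because on that event $\mu_n$ is much smaller than $\mu$, and total variation convergence forces $\mu_n$ and $\mu$ to be close on all sets.

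\textbf{Key steps.} First I would reduce to a reference measure: since $\mu \ll \mu_n$, both are absolutely continuous with respect to, say, $\lambda_n := \tfrac12(\mu+\mu_n)$ (or one can work directly with $\mu_n$). The cleanest route is the following direct estimate. On the set $A_{n,C} := \{\rho_n \ge C\}$ we have
\begin{equation*}
  \mu_n(A_{n,C}) = \int_{A_{n,C}} d\mu_n = \int_{A_{n,C}} \frac{1}{\rho_n}\,d\mu \le \frac{1}{C}\,\mu(A_{n,C}) \le \frac{1}{C}.
\end{equation*}
Hence $\mu(A_{n,C}) \le \mu_n(A_{n,C}) + \|\mu - \mu_n\|_{TV} \le \tfrac1C + \|\mu-\mu_n\|_{TV}$, using the definition of total variation distance as $\sup_B |\mu(B) - \mu_n(B)|$ (up to the usual normalization constant, which I would fix once at the start). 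Now let $n \to \infty$: by hypothesis $\|\mu - \mu_n\|_{TV} \to 0$, so $\limsup_{n\to\infty} \mu(A_{n,C}) \le \tfrac1C$. Finally let $C \to \infty$ to conclude $\limsup_{C\to\infty}\limsup_{n\to\infty}\mu(A_{n,C}) = 0$, which is exactly the claim.

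\textbf{Main obstacle.} There is essentially no deep obstacle here; the only points requiring care are bookkeeping ones. One must be consistent about the normalization convention for $\|\cdot\|_{TV}$ (whether it is $\sup_B|\mu(B)-\mu_n(B)|$ or twice that, i.e. the $L^1$ norm of the density difference); whichever is used, the inequality $\mu(A)-\mu_n(A) \le \|\mu-\mu_n\|_{TV}$ (or $\le \tfrac12\|\cdot\|_{TV}$) holds and the limit argument is unaffected. One should also note that the chain of inequalities $\int_{A_{n,C}} \tfrac{1}{\rho_n}\,d\mu = \int_{A_{n,C}} d\mu_n$ uses $\mu \ll \mu_n$ so that $\tfrac{d\mu_n}{d\mu} = 1/\rho_n$ holds $\mu$-a.e.\ on $\{\rho_n > 0\}$, and that $\{\rho_n = 0\}$ is $\mu$-null. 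With these conventions pinned down, the proof is the three-line computation above followed by the iterated limit.
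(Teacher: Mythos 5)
Your proposal is correct and coincides with the paper's own proof: the same Markov-type bound $\mu_n(\{\tfrac{d\mu}{d\mu_n}\ge C\})\le C^{-1}$ (the paper obtains it via $\mu(A_n)=\int_{A_n}\tfrac{d\mu}{d\mu_n}\,d\mu_n\ge C\mu_n(A_n)$, avoiding any mention of $\tfrac{d\mu_n}{d\mu}$, but this is the same estimate read in the other direction) followed by $\mu(A_n)\le \mu_n(A_n)+\|\mu-\mu_n\|_{TV}$ and the iterated limit. No gaps.
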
 

The proof is deferred to Appendix~\ref{se:omittedProofs}. 
Our final result is the stability of the optimal couplings in the topology of total variation, complementing the weak stability shown in Theorem~\ref{thm:main}\,(i).

\begin{corollary}\label{co:TVstability}
Let~\eqref{eq:posBddL1Cond} hold and let $\mu_n,\nu_n$ converge in total variation to~$\mu,\nu$ where $\mu\ll\mu_{n}$ and $\nu\ll\nu_{n}$. Then $\pi_{n}\to\pi_{*}$ in total variation.
\end{corollary}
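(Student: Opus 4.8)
The plan is to combine Corollary~\ref{cor:abs_cont}, Lemma~\ref{le:TVconvImpliesCond}, and Scheff\'e's lemma. First, observe that total variation convergence is stronger than weak convergence, so $\mu_n\to\mu$ and $\nu_n\to\nu$ weakly, and together with~\eqref{eq:posBddL1Cond} all hypotheses of Theorem~\ref{thm:main} and Corollary~\ref{cor:abs_cont} are in force provided we verify~\eqref{eq:bounded_prob}. That verification is exactly the content of Lemma~\ref{le:TVconvImpliesCond}, applied separately to $(\mu_n)$ and $(\nu_n)$. Hence by Corollary~\ref{cor:abs_cont}, $f_n\oplus g_n\to f\oplus g$ in $\mu\otimes\nu$-probability, where $(f,g)$ are potentials for $(\mu,\nu)$.

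Next I would pass from convergence of potentials to convergence of densities against the \emph{fixed} reference measure $\mu\otimes\nu$. Write $h_n:=e^{f_n\oplus g_n-c}=\frac{d\pi_n}{d(\mu_n\otimes\nu_n)}$ and $h:=e^{f\oplus g-c}=\frac{d\pi_*}{d(\mu\otimes\nu)}$. Since $f_n\oplus g_n\to f\oplus g$ in $\mu\otimes\nu$-probability and $c$ is fixed, $h_n\to h$ in $\mu\otimes\nu$-probability. I then want to express $\frac{d\pi_n}{d(\mu\otimes\nu)}$: because $\mu\ll\mu_n$ and $\nu\ll\nu_n$, we have $\mu\otimes\nu\ll\mu_n\otimes\nu_n$ with density $\frac{d\mu}{d\mu_n}(x)\frac{d\nu}{d\nu_n}(y)$, and therefore on the set where this density is positive, $\frac{d\pi_n}{d(\mu\otimes\nu)}=h_n\big/\big(\frac{d\mu}{d\mu_n}\frac{d\nu}{d\nu_n}\big)$. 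This is slightly delicate because $\pi_n$ need not be absolutely continuous with respect to $\mu\otimes\nu$ in general (if $\mu_n\otimes\nu_n$ charges sets null for $\mu\otimes\nu$). The clean way around this is to not insist on a common dominating measure at all but instead work directly in total variation: decompose
\[
\|\pi_n-\pi_*\|_{TV}\le \|\pi_n-h\cdot(\mu_n\otimes\nu_n)\|_{TV}+\|h\cdot(\mu_n\otimes\nu_n)-h\cdot(\mu\otimes\nu)\|_{TV},
\]
where $h\cdot(\mu_n\otimes\nu_n)$ denotes the measure with density $h$ with respect to $\mu_n\otimes\nu_n$ (which need not be a probability measure, but that is harmless). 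The first term equals $\int|h_n-h|\,d(\mu_n\otimes\nu_n)$ up to the normalization, and the second is controlled by $\|\mu_n\otimes\nu_n-\mu\otimes\nu\|_{TV}\cdot\|h\|_\infty$-type estimates—except $h$ is unbounded, so this must be handled by a truncation argument.

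So the real work is the first term: showing $\int |h_n-h|\,d(\mu_n\otimes\nu_n)\to 0$. Here is where Scheff\'e enters. We know $h_n\to h$ in $\mu\otimes\nu$-probability; upgrading to convergence in $\mu_n\otimes\nu_n$-probability uses that $\frac{d\mu}{d\mu_n}\frac{d\nu}{d\nu_n}\to1$ in $\mu\otimes\nu$-probability (a consequence of total variation convergence and Lemma~\ref{le:TVconvImpliesCond}-type boundedness), which lets us transfer probability-convergence statements back and forth on the set where the Radon--Nikodym derivatives are comparable. We also know $\int h_n\,d(\mu_n\otimes\nu_n)=\pi_n(\X\times\Y)=1=\pi_*(\X\times\Y)=\int h\,d(\mu\otimes\nu)$, and one checks $\int h\,d(\mu_n\otimes\nu_n)\to\int h\,d(\mu\otimes\nu)=1$ as well (again by total variation convergence of the product marginals, via dominated convergence after truncating $h$ and using $\int h\,d(\mu_n\otimes\nu_n)\le$ something uniformly bounded—this needs the uniform integrability of $h$, which follows since $\int h_n\,d(\mu_n\otimes\nu_n)=1$ and $h_n\to h$). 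Then the generalized Scheff\'e lemma (convergence a.s./in probability plus convergence of integrals implies $L^1$ convergence) applied to the sequence $h_n$ with respect to the \emph{varying} measures $\mu_n\otimes\nu_n$ gives $\int|h_n-h|\,d(\mu_n\otimes\nu_n)\to0$, hence $\|\pi_n-\pi_*\|_{TV}\to0$.

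The main obstacle I anticipate is the bookkeeping around the varying reference measures $\mu_n\otimes\nu_n$: Scheff\'e's lemma in its textbook form is stated for a fixed measure, so one must either cite/prove a version with varying measures, or reduce to a fixed measure by writing everything against $\mu\otimes\nu$ and carefully handling the singular part of $\pi_n$ relative to $\mu\otimes\nu$ (showing that this singular part has vanishing mass, which again follows from total variation convergence of marginals plus $\mu\ll\mu_n$, $\nu\ll\nu_n$). The truncation estimates for the unbounded density $h$ are routine but need the uniform integrability of $\{h_n\}$ with respect to $\{\mu_n\otimes\nu_n\}$, which in turn is what Scheff\'e is really exploiting—so the argument is somewhat circular-looking and must be organized carefully: first establish $h_n\to h$ in $\mu_n\otimes\nu_n$-probability and $\int h_n\,d(\mu_n\otimes\nu_n)\to\int h\,d(\mu\otimes\nu)$ (the latter needing a separate argument that $h$ is $\mu_n\otimes\nu_n$-integrable with integrals converging, e.g.\ via $\pi_*\ll\mu\otimes\nu\ll\mu_n\otimes\nu_n$ and Fatou plus the constraint that total mass is $1$), and only then invoke Scheff\'e.
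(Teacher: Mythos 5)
The first half of your argument is exactly the paper's: Corollary~\ref{cor:abs_cont} plus Lemma~\ref{le:TVconvImpliesCond} give $f_n\oplus g_n\to f\oplus g$ in $\mu\otimes\nu$-probability (along a subsequence, after fixing the normalization), and the transfer of convergence in probability between $\mu\otimes\nu$ and $\mu_n\otimes\nu_n$ is indeed harmless, since total variation convergence is uniform convergence over measurable sets. The gap is in the second half. Your decomposition hinges on showing $\int h\,d(\mu_n\otimes\nu_n)\to 1$ for the \emph{fixed} unbounded density $h=e^{f\oplus g-c}$, equivalently on $h$ being uniformly integrable with respect to the varying measures $(\mu_n\otimes\nu_n)$. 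This does not follow from the hypotheses, and neither of your two suggested justifications works: Fatou (or truncation plus monotone convergence) only yields $\liminf_n\int h\,d(\mu_n\otimes\nu_n)\ge\int h\,d(\mu\otimes\nu)=1$, which is the wrong direction; and "$\int h_n\,d(\mu_n\otimes\nu_n)=1$ plus $h_n\to h$" gives no control on $\int h\,d(\mu_n\otimes\nu_n)$ at all. Concretely, take $\mu_n=(1-\tfrac1n)\mu+\tfrac1n\lambda_n$ with $\lambda_n$ concentrated where $e^{f}$ is of size $n^2$; then $\mu\ll\mu_n$, $\|\mu_n-\mu\|_{TV}\to0$, yet $\int e^{f}\,d\mu_n\to\infty$. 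In such a situation both terms of your decomposition can be infinite, and the Scheff\'e domination $(h-h_n)^+\le h$ is useless because $h$ need not be integrable against $\mu_n\otimes\nu_n$. This is precisely the circularity you flag at the end, and it is not resolved by reorganization: any route that integrates the fixed limit $h$ against the moving measures runs into it.

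The paper escapes by inverting the densities so that Scheff\'e is applied against the \emph{fixed} measure $\mu\otimes\nu$, where total masses are automatically bounded by one. Since $\mu\otimes\nu\ll\mu_n\otimes\nu_n\sim\pi_n$, one may form $\frac{d(\mu\otimes\nu)}{d\pi_n}=e^{-(f_n\oplus g_n-c)}\frac{d\mu}{d\mu_n}\frac{d\nu}{d\nu_n}$, and total variation convergence of the marginals gives $\frac{d\mu}{d\mu_n}\to1$ in $\mu$-probability (via the Lebesgue decomposition of $\mu_n$ and $L^1(\mu)$-convergence of $d\mu_n'/d\mu$), so this reciprocal density converges in $\mu\otimes\nu$-probability to $\frac{d(\mu\otimes\nu)}{d\pi_*}$. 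Inverting back, the density of the absolutely continuous part $\pi_n'$ of $\pi_n$ with respect to $\mu\otimes\nu$ converges in $\mu\otimes\nu$-probability to $\frac{d\pi_*}{d(\mu\otimes\nu)}$; here $\pi_n'(\X\times\Y)\le1$ while Fatou gives $\liminf_n\pi_n'(\X\times\Y)\ge1$, so Scheff\'e yields $\pi_n'\to\pi_*$ in total variation and the singular parts $\pi_n''$ have vanishing mass. If you want to salvage your write-up, replace the forward decomposition by this reciprocal argument.
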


\begin{proof}
 For the sake of readability, we state here the proof under the additional assumption that $\mu_n\sim \mu$ and $\nu_n\sim \nu$; the general case is deferred to Appendix~\ref{se:omittedProofs}. 
 By Corollary~\ref{cor:abs_cont} and Lemma~\ref{le:TVconvImpliesCond}, we have $f_{n}\to f$ in $\mu$-probability and $g_{n}\to g$ in $\nu$-probability after passing to a subsequence.
 Under the additional assumption, $\frac{d\mu_n}{d\mu}\to1$ in $L^{1}(\mu)$ and $\frac{d\nu_n}{d\nu}\to1$ in $L^{1}(\nu)$. We see that
 $$
   \frac{d\pi_{n}}{d(\mu\otimes\nu)} = \frac{d\pi_{n}}{d(\mu_{n}\otimes\nu_{n})}\frac{d\mu_n}{d\mu}\frac{d\nu_n}{d\nu} = e^{f_{n}\oplus  g_{n}-c}\frac{d\mu_n}{d\mu}\frac{d\nu_n}{d\nu} 
   \,\to\, e^{f\oplus  g-c} = \frac{d\pi_{*}}{d(\mu\otimes\nu)}
 $$
 in $\mu\otimes\nu$-probability. But then the convergence also holds in $L^{1}(\mu\otimes\nu)$, by  Scheff\'e's lemma, and we conclude that $\pi_{n}\to\pi_{*}$ in total variation. The convergence of the original sequence follows.
\end{proof}

\section{Convergence of Sinkhorn's Algorithm}\label{se:sink}

Fix marginals $(\mu,\nu)\in\cP(\X)\times\cP(\Y)$ and a continuous cost $c:\X\times\Y\to[0,\infty)$. Sinkhorn's algorithm~\eqref{eq:SinkPrimalImplicit} can be written in terms of potentials. Set $\varphi_{0}:=0$  and 
\begin{align}\label{eq:SinkDual}
\begin{split}
    \psi_{t}(y) &:= -\log \int_{\X} e^{\varphi_{t}(x)-c(x,y)} \, \mu(dx),\\
    \varphi_{t+1}(x)&:= -\log \int_{\Y} e^{\psi_{t}(y)-c(x,y)}\, \nu(dy)
\end{split}  
\end{align}
for $t\geq0$, and
define the measures
$$
  d\pi(\varphi,\psi) :=e^{\varphi\oplus\psi-c}\,d(\mu\otimes\nu), \qquad 
\pi_{2t}:=\pi(\varphi_{t},\psi_{t}), \qquad \pi_{2t-1} := \pi(\varphi_{t},\psi_{t-1}),
$$
where $\psi_{-1}:=0$. One can check by direct calculation that~$\pi_{n}\in\cP(\X\times\Y)$ are the same measures as in~\eqref{eq:SinkPrimalImplicit}. Denoting by $(\mu_{n},\nu_{n})$ the marginal distributions of~$\pi_{n}$, the following summarizes well known properties of Sinkhorn's algorithm (e.g., \cite[Section~6]{Nutz.20}).

\begin{lemma}\label{le:sinkFacts}
  Let $\cC(\mu,\nu)<\infty$. We have $\mu_{n}\sim\mu$ and $\nu_{n}\sim\nu$ for all $n\geq0$. Moreover, 
  $H(\mu_{n}|\mu)+H(\nu_{n}|\nu)\to0$; in  particular, $\mu_{n}\to\mu$ and $\nu_{n}\to\nu$ in total variation. For $t\geq0$, the marginals satisfy
\begin{equation}\label{eq:sinkItMarginals}
  \mu_{2t+1}=\mu, \qquad \nu_{2t}=\nu, \qquad \frac{d\mu_{2t}}{d\mu}=e^{\varphi_{t}-\varphi_{t+1}}, \qquad \frac{d\nu_{2t-1}}{d\nu}=e^{\psi_{t-1}-\psi_{t}}.
\end{equation}  
  It follows that for $n\geq1$,
  $$
   \frac{d\pi_{n}}{d(\mu_{n}\otimes\nu_{n})}
  =e^{f_{n}\oplus g_{n}-c} \quad \mu_{n}\otimes\nu_{n}\as,
  $$   
  where
  \begin{equation}\label{eq:sinkToPotentials}
  \begin{cases}
    f_{n} := \varphi_{t+1}, \quad& g_{n} := \psi_{t}\qquad \mbox{ if }~n=2t, \\
    f_{n} := \varphi_{t}, \quad &g_{n} := \psi_{t}\qquad \mbox{ if }~n=2t-1. 
  \end{cases}  
  \end{equation}
\end{lemma}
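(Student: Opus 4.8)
The plan is to exploit the closed-form description of each step of Sinkhorn's algorithm as a relative-entropy ($I$-)projection onto a marginal constraint, and then to run the standard Pythagorean argument for such projections. Since $\cC(\mu,\nu)<\infty$, the measure $\pi_{-1}\propto e^{-c}\,d(\mu\otimes\nu)$ is a well-defined probability measure equivalent to $\mu\otimes\nu$ (note $0<\int e^{-c}\,d(\mu\otimes\nu)\le1$). Arguing inductively, the minimizer $\pi_{2t}=\argmin_{\Pi(\ast,\nu)}H(\,\cdot\,|\pi_{2t-1})$ exists, is unique, is equivalent to $\pi_{2t-1}$, and is obtained by multiplying $d\pi_{2t-1}$ by the factor $\frac{d\nu}{d\nu_{2t-1}}(y)$, which depends on $y$ only; symmetrically, the odd step multiplies by a factor depending on $x$ only. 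This immediately yields $\mu_n\sim\mu$ and $\nu_n\sim\nu$ for all $n\ge0$, and substituting these reweighting factors identifies the iterates with the measures $\pi(\varphi_t,\psi_t)$ and $\pi(\varphi_{t+1},\psi_t)$ built from~\eqref{eq:SinkDual}; in particular every $\pi_n$ is a probability measure. This part is routine; cf.\ \cite[Section~6]{Nutz.20}.

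The marginal and density identities then follow by direct computation. The first marginal of $\pi(\varphi,\psi)$ has $\mu$-density $x\mapsto e^{\varphi(x)}\int_\Y e^{\psi(y)-c(x,y)}\,\nu(dy)$ and the second has $\nu$-density $y\mapsto e^{\psi(y)}\int_\X e^{\varphi(x)-c(x,y)}\,\mu(dx)$. Inserting the relations $\psi_t=-\log\int e^{\varphi_t-c}\,d\mu$ and $\varphi_{t+1}=-\log\int e^{\psi_t-c}\,d\nu$ from~\eqref{eq:SinkDual}, with $\pi_{2t}=\pi(\varphi_t,\psi_t)$ and $\pi_{2t-1}=\pi(\varphi_t,\psi_{t-1})$, gives at once $\nu_{2t}=\nu$, $\mu_{2t+1}=\mu$, $\frac{d\mu_{2t}}{d\mu}=e^{\varphi_t-\varphi_{t+1}}$ and $\frac{d\nu_{2t-1}}{d\nu}=e^{\psi_{t-1}-\psi_t}$. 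For the density formula, observe that $\frac{d\pi_n}{d(\mu_n\otimes\nu_n)}$ is $\frac{d\pi_n}{d(\mu\otimes\nu)}$ divided by $\frac{d\mu_n}{d\mu}(x)\,\frac{d\nu_n}{d\nu}(y)$; for $n=2t$ this equals $e^{\varphi_t\oplus\psi_t-c}/e^{\varphi_t-\varphi_{t+1}}=e^{\varphi_{t+1}\oplus\psi_t-c}$, and for $n=2t-1$ it equals $e^{\varphi_t\oplus\psi_{t-1}-c}/e^{\psi_{t-1}-\psi_t}=e^{\varphi_t\oplus\psi_t-c}$, which is exactly~\eqref{eq:sinkToPotentials}.

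The only substantive point is the entropy convergence. Since $\pi_{2t+1}$ is the $I$-projection of $\pi_{2t}$ onto the linear family $\Pi(\mu,\ast)$, Csisz\'ar's Pythagorean identity gives, for every $\rho\in\Pi(\mu,\ast)$,
$$
H(\rho|\pi_{2t}) = H(\rho|\pi_{2t+1}) + H(\pi_{2t+1}|\pi_{2t}),
$$
and symmetrically for the even step. Taking $\rho=\pi_*$, which lies in $\Pi(\mu,\nu)\subset\Pi(\mu,\ast)\cap\Pi(\ast,\nu)$, shows that $n\mapsto H(\pi_*|\pi_n)$ is nonincreasing, and writing out densities gives $H(\pi_*|\pi_{-1})=\cC(\mu,\nu)+\log\int e^{-c}\,d(\mu\otimes\nu)<\infty$, where finiteness uses only $\cC(\mu,\nu)=H(\pi_*|\mu\otimes\nu)+\int c\,d\pi_*<\infty$. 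Telescoping therefore yields $\sum_{n\ge0}H(\pi_n|\pi_{n-1})<\infty$, so the increments vanish. Finally, because $\frac{d\pi_{2t+1}}{d\pi_{2t}}$ depends on $x$ only and is the Radon--Nikodym factor relating $\mu$ and $\mu_{2t}$, integrating out $y$ reduces $H(\pi_{2t+1}|\pi_{2t})$ to a relative entropy of the marginals $\mu$ and $\mu_{2t}$, and likewise $H(\pi_{2t}|\pi_{2t-1})$ to one of $\nu$ and $\nu_{2t-1}$; together with $\mu_{2t+1}=\mu$ and $\nu_{2t}=\nu$ this gives $H(\mu_n|\mu)+H(\nu_n|\nu)\to0$, and Pinsker's inequality upgrades it to $\mu_n\to\mu$ and $\nu_n\to\nu$ in total variation.

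I expect the main obstacle to be the bookkeeping in the last paragraph: correctly identifying each step as an $I$-projection onto the right family, matching its reweighting factor to the appropriate marginal and potential index, and checking the finiteness of $H(\pi_*|\pi_{-1})$, which is exactly where the standing assumption $\cC(\mu,\nu)<\infty$ is used. The rest reduces to direct substitution in the explicit formulas for entropy projections.
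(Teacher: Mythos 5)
The paper offers no proof of this lemma (it is stated as a summary of known facts with a citation to \cite[Section~6]{Nutz.20}), so your argument is being judged on its own. Your identification of the iterates with $\pi(\varphi_t,\psi_t)$, $\pi(\varphi_{t+1},\psi_t)$, the marginal computations in \eqref{eq:sinkItMarginals}, the density formula \eqref{eq:sinkToPotentials}, and the Pythagorean telescoping $H(\pi_*|\pi_{n-1})-H(\pi_*|\pi_n)=H(\pi_n|\pi_{n-1})$ with $H(\pi_*|\pi_{-1})=\cC(\mu,\nu)+\log\int e^{-c}\,d(\mu\otimes\nu)<\infty$ are all correct. But there is a genuine gap in the last paragraph, hidden by your deliberately order-free phrase ``a relative entropy of the marginals $\mu$ and $\mu_{2t}$.'' The $I$-projection of $\pi_{2t}$ onto $\Pi(\mu,\ast)$ reweights by $\frac{d\mu}{d\mu_{2t}}(x)$, so $H(\pi_{2t+1}|\pi_{2t})=\int\log\frac{d\mu}{d\mu_{2t}}\,d\mu=H(\mu|\mu_{2t})$, and likewise $H(\pi_{2t}|\pi_{2t-1})=H(\nu|\nu_{2t-1})$. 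Your telescoping therefore proves $H(\mu|\mu_n)+H(\nu|\nu_n)\to0$, i.e.\ the relative entropies with the arguments in the \emph{opposite} order from the statement. Relative entropy is asymmetric and neither direction controls the other in general; moreover the direction genuinely matters downstream, because Corollary~\ref{co:sink} feeds $\sup_n[H(\mu_n|\mu)+H(\nu_n|\nu)]<\infty$ into Lemma~\ref{le:suff_cond_for_bddL1}\,(ii), whose Donsker--Varadhan bound requires the iterate in the first slot. (Your version does suffice for the total variation conclusion, since Pinsker works with either direction.)

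The fix is one application of the data-processing inequality, using the marginal identities you already established. Since $\mu_{2t-1}=\mu$ and marginalization onto the first coordinate decreases relative entropy,
\begin{equation*}
H(\mu_{2t}|\mu)=H(\mu_{2t}|\mu_{2t-1})\le H(\pi_{2t}|\pi_{2t-1})=H(\nu|\nu_{2t-1}),
\end{equation*}
and symmetrically $H(\nu_{2t+1}|\nu)=H(\nu_{2t+1}|\nu_{2t})\le H(\pi_{2t+1}|\pi_{2t})=H(\mu|\mu_{2t})$. The right-hand sides are exactly the summable increments from your telescoping, so $H(\mu_n|\mu)+H(\nu_n|\nu)\to0$ follows. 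With this one line added, your proof is complete.
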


In brief, $(f_{n},g_{n})$ are potentials for the marginals $(\mu_{n},\nu_{n})$ which in turn converge to~$(\mu,\nu)$ in total variation. The stability result of Corollary~\ref{co:TVstability} then yields the following  convergence result. As  emphasized in the Introduction, it covers quadratic costs with arbitrary subgaussian marginals and the problem~\eqref{eq:epsOT} with arbitrary regularization parameter~$\eps>0$.

\begin{corollary}\label{co:sink}
  Suppose that 
  \begin{equation*}%
    \mbox{$e^{\beta c}\in L^{1}(\mu\otimes\nu)$ \;for some $\beta>0$}.
  \end{equation*}
  Then $\cC(\mu,\nu)<\infty$ and the Sinkhorn iterates $(\pi_{n})$ converge to~$\pi_{*}$ in total variation.
\end{corollary}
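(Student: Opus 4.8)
The plan is to reduce the statement entirely to the machinery already assembled, so that essentially nothing new needs to be proved. First I would verify the hypothesis $\cC(\mu,\nu)<\infty$: since $c\geq 0$ and $e^{\beta c}\in L^1(\mu\otimes\nu)$, for the product coupling $\mu\otimes\nu$ we have $H(\mu\otimes\nu|\mu\otimes\nu)=0$ and $\int c\,d(\mu\otimes\nu)\leq \beta^{-1}\int e^{\beta c}\,d(\mu\otimes\nu)<\infty$ (using $t\leq\beta^{-1}e^{\beta t}$), so the infimum in \eqref{eq:EOT} is finite. This licenses the use of Lemma \ref{le:sinkFacts}: the Sinkhorn iterates $\pi_n$ have marginals $(\mu_n,\nu_n)$ with $\mu_n\sim\mu$, $\nu_n\sim\nu$, $H(\mu_n|\mu)+H(\nu_n|\nu)\to 0$, hence $\mu_n\to\mu$ and $\nu_n\to\nu$ in total variation, and by \eqref{eq:sinkToPotentials} the functions $(f_n,g_n)$ are genuine potentials for $(\mu_n,\nu_n)$.

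The remaining task is to check the hypothesis \eqref{eq:posBddL1Cond} of Theorem \ref{thm:main}, i.e.\ $\sup_n\int f_n^+\,d\mu_n<\infty$ and $\sup_n\int g_n^+\,d\nu_n<\infty$, so that Corollary \ref{co:TVstability} applies with $\mu\ll\mu_n$, $\nu\ll\nu_n$ and total-variation convergence of the marginals, yielding $\pi_n\to\pi_*$ in total variation. Here I would invoke Lemma \ref{le:suff_cond_for_bddL1}. Since $H(\mu_n|\mu)+H(\nu_n|\nu)\to 0$ is bounded and $e^{\beta c}\in L^1(\mu\otimes\nu)$, condition \eqref{eq:condition_entropy} holds, hence \eqref{eq:condition_cost_int} holds, i.e.\ $\sup_n\int c\,d(\mu_n\otimes\nu_n)<\infty$. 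By part (i) of that lemma, once we know \eqref{eq:negBddL1Cond} holds for some normalization of $(f_n,g_n)$, we get \eqref{eq:posBddL1Cond} and \eqref{eq:bddL1Cond}. As noted in the remark following Lemma \ref{le:suff_cond_for_bddL1}, \eqref{eq:condition_cost_int} forces $f_n^+\in L^1(\mu_n)$, $g_n^+\in L^1(\nu_n)$, and then the duality $\int f_n\,d\mu_n+\int g_n\,d\nu_n=\cC(\mu_n,\nu_n)\geq 0$ gives \eqref{eq:negBddL1Cond} after centering, e.g.\ replacing $(f_n,g_n)$ by $(f_n-a_n, g_n+a_n)$ with $a_n:=\int f_n\,d\mu_n$. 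Since total-variation convergence of $\pi_n$ to $\pi_*$ is unaffected by this additive renormalization (it only changes $f_n\oplus g_n$ by $0$), we may assume this normalization from the outset.

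The one subtlety I would be careful about is the indexing of the Sinkhorn sequence versus the convergence $\pi_n\to\pi_*$ in Theorem \ref{thm:main}: there, $\pi_*$ denotes the optimal coupling for the \emph{limiting} marginals $(\mu,\nu)$, which is precisely the solution of \eqref{eq:EOT}, so the conclusion is exactly the desired statement. I expect no genuine obstacle here — the only mild point is confirming that the normalization adjustment is harmless, which is immediate since it leaves all the $\pi_n$ (and $\pi_*$) unchanged. Finally, since the subsequence in Corollary \ref{co:TVstability} is extracted but the limit $\pi_*$ is unique, and total-variation convergence is metric, a standard subsequence argument upgrades convergence along a subsequence to convergence of the full sequence $(\pi_n)$, completing the proof.
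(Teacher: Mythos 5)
Your proposal is correct, and its overall skeleton coincides with the paper's: verify $\cC(\mu,\nu)<\infty$, invoke Lemma~\ref{le:sinkFacts} for equivalence and total-variation convergence of the Sinkhorn marginals, check~\eqref{eq:posBddL1Cond}, and conclude via Corollary~\ref{co:TVstability}. The one step where you genuinely diverge is the verification of~\eqref{eq:posBddL1Cond}. The paper does this directly for the \emph{un}-normalized Sinkhorn potentials, exploiting the explicit recursion~\eqref{eq:SinkDual}: Jensen's inequality together with the facts $\int\varphi_t\,d\mu\ge0$ and $\int\psi_t\,d\nu\ge0$ (imported from \cite[Lemma~6.4]{Nutz.20}) gives the pointwise bound $f_n(x)\le\int c(x,y)\,\nu_n(dy)$ for $n=2t$ (and its analogue for $g_n$ and for odd $n$), whence $\int f_n^+\,d\mu_n\le\int c\,d(\mu_n\otimes\nu_n)$, which is bounded by~\eqref{eq:SinkPrf1}. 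You instead recenter the potentials so that $\int f_n\,d\mu_n=0$, use duality to get~\eqref{eq:negBddL1Cond}, and then invoke the general Lemma~\ref{le:suff_cond_for_bddL1}\,(i); this is exactly the route anticipated in the remark following that lemma, and it is legitimate because the recentering leaves $f_n\oplus g_n$, hence every $\pi_n$, unchanged, and because $c\in L^1(\mu_n\otimes\nu_n)$ guarantees $f_n\in L^1(\mu_n)$ so the centering constant is finite. Your version is slightly more modular (it needs nothing Sinkhorn-specific beyond Lemma~\ref{le:sinkFacts}), while the paper's avoids any renormalization discussion at the cost of citing additional structural facts about the iterates. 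The final subsequence remark you add is harmless but unnecessary, since Corollary~\ref{co:TVstability} already asserts convergence of the full sequence.
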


\begin{proof}
  As $H(\mu_{n}|\mu)+H(\nu_{n}|\nu)\to0$ by Lemma~\ref{le:sinkFacts}, Lemma~\ref{le:suff_cond_for_bddL1} yields that%
  \begin{align}\label{eq:SinkPrf1}
    \sup_{n\in\N} \int c\,d (\mu_n\otimes \nu_n)<\infty.
  \end{align} 
  In particular, $\cC(\mu_{n},\nu_{n})<\infty$ and $(f_{n},g_{n})$ as defined in Lemma~\ref{le:sinkFacts} are potentials for the marginals $(\mu_{n},\nu_{n})$; cf.\ Propositions~\ref{pr:existence} and~\ref{pr:duality}. 
Next, we show that~$(f_{n},g_{n})$ satisfy~\eqref{eq:posBddL1Cond}. In general, the Sinkhorn iterates satisfy $(\varphi_{t},\psi_{t})\in  L^{1}(\mu)\times L^{1}(\nu)$ as well as $\int \varphi_{t}\,d\mu\geq0$ and $\int \psi_{t}\,d\nu\geq -\log \int e^{-c}\,d(\mu\otimes\nu)$; see \cite[Lemma~6.4 and its footnote]{Nutz.20}. Here, as $c\geq0$, we have $\int \varphi_{t}\,d\mu\geq0$ and $\int \psi_{t}\,d\nu\geq0$.

Consider~$n=2t$, then $\nu_{n}=\nu$. Using~\eqref{eq:SinkDual}, Jensen's inequality and $\int \psi_{t}\,d\nu\geq0$,
  \begin{align*}
    f_{n}(x) = \varphi_{t+1}(x) \leq  \int c(x,y) \, \nu(dy) =  \int c(x,y) \, \nu_{n}(dy)
  \end{align*}
  and hence 
  $
    \int f_{n}^{+}\,d\mu_{n} \leq \int c \, d(\mu_{n}\otimes\nu_{n})
  $
  which is bounded by~\eqref{eq:SinkPrf1}. Similarly, \eqref{eq:SinkDual} implies
  $
    g_{n}(y) =  \psi_{t}(y) \leq \int c(x,y) \, \mu(dx)
  $
  and hence 
  \begin{align*}
    \int g_{n}^{+}\,d\nu_{n} = \int g_{n}^{+}\,d\nu \leq \int c \, d(\mu\otimes\nu) <\infty.
  \end{align*}  
  The argument for~$n=2t-1$ is symmetric, so that~\eqref{eq:posBddL1Cond} holds.
  As the marginals are  equivalent and converge in total variation by Lemma~\ref{le:sinkFacts}, the claim follows by Corollary~\ref{co:TVstability}. 
\end{proof} 

\section{Proof of Theorem \ref{thm:main}}\label{se:mainProof}

The proof is structured into several steps.
\begin{itemize}[labelindent=3em, leftmargin=*,align=left,  topsep=9pt, itemsep=2pt]
\item[Step~\ref{st:step1}:] Definition of sets $A_{n}^{k}, \Xcpt^{k}$.
\item[Step~\ref{st:step2}:] Definition of $f_{n}^{k}$.
\item[Step~\ref{st:step3}:] Boundedness of $(f_{n}^{k})_{n\in\N}$ on compact sets.
\item[Step~\ref{st:step4}:] Construction of $F_{n}^{k}$  and $F^{k}$.
\item[Step~\ref{st:step5}:] Further properties related to $F_{n}^{k}$  and $F^{k}$.
\item[Step~\ref{st:step6}:] Proof that  $(F^{k})_{k\in\N}$ is $\mu$-Cauchy and definition of $f$.
\item[Step~\ref{st:step7}:] Proof that $(f_{n})_{\#}\mu_{n}\to f_{\#}\mu$.
\item[Step~\ref{st:step8}:] Proof that $f,g$ induce a coupling $\pi$ and $\pi_{n}\to\pi$.
\item[Step~\ref{st:step9}:] Identification of the limit, end of proof of Theorem~\ref{thm:main}\,(i),(ii).
\item[Step~\ref{st:step10}:] Proof of Theorem~\ref{thm:main}\,(iii).
\end{itemize} 

As noted in Appendix~\ref{se:background}, we may choose versions of the potentials  satisfying the Schr\"odinger equations~\eqref{eq:SE} everywhere. For brevity, we generally only detail the arguments for~$(f_n)$, the arguments for~$(g_n)$ are symmetric. Recall that a sequence $(\eps_k)_{k\in\N}\subset (0,1)$ with $\eps_{k}\downarrow 0$ is given. We define $(\tilde{\delta}_k)_{k\in\N}\subset(0,1/2)$ by $\tilde{\delta}_k=\eps_{k}/2$ and another sequence $(\delta_{k})_{k\in\N}$ by $\delta_k= 1-e^{-\tilde{\delta}_k}$; i.e.,
\begin{equation}\label{eq:deltaRel}
  \tilde{\delta}_k= -\log(1-\delta_k).
\end{equation}
It follows that $0\leq \delta_k\leq \tilde{\delta}_k$ and $\tilde{\delta}_{k}\downarrow 0$. For notational convenience, we set $\mu_0:=\mu$, $\nu_0:=\nu$ and $\N_0:=\N\cup \{0\}$.

Step~\ref{st:step1} is based on the following generalization of~\cite[Lemma~2.3]{NutzWiesel.21}  extending that result from a single measure to a tight set of measures.

\begin{lemma}\label{lem:ext}
Let $\delta\in (0,1)$. There are compact sets $\Xcpt(\delta)\subseteq \mathcal{X}$, $\Ycpt(\delta)\subseteq \mathcal{Y}$ with 
$$ 
\mu_n(\Xcpt(\delta))\ge 1-\delta, \qquad \nu_n(\Ycpt(\delta))\ge 1-\delta, \qquad n\in \N_0
$$
and measurable sets $A_{n}\subseteq \Xcpt(\delta)$, $B_{n}\subseteq \Ycpt(\delta)$ for $n\in\N$ such that 
\begin{align*}
&\mu_n(A_{n})\ge 1-\delta, \qquad \nu_n(B_{n}) \ge 1-\delta, \qquad n\in \N, \\[.3em]
\left|f_n(x_1)-f_n(x_2)\right|&\le \sup_{y\in \Ycpt(\delta)} \left|c(x_1, y)-c(x_2,y)\right| -\log(1-\delta) \quad\mbox{for} \quad x_1,x_2\in A_{n},\\[.5em]
\left|g_n(y_1)-g_n(y_2)\right|%
&\le \sup_{x\in \Xcpt(\delta)} \left|c(x, y_1)-c(x,y_2)\right| -\log(1-\delta)  \quad\mbox{for} \quad y_1, y_2\in B_{n}.
\end{align*}
\end{lemma}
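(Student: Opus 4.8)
The plan is to derive this from the single-measure result \cite[Lemma~2.3]{NutzWiesel.21} combined with tightness of the families $\{\mu_n\}_{n\in\N_0}$ and $\{\nu_n\}_{n\in\N_0}$. First I would record the key regularity fact behind the single-measure lemma: if $(f_n,g_n)$ are potentials satisfying the Schr\"odinger equations everywhere, then for $\mu_n$-a.e.\ $x_1$ and $\mu_n$-a.e.\ $x_2$ one has the pointwise estimate
\[
  f_n(x_1) - f_n(x_2) = \log \frac{\int e^{g_n(y)-c(x_2,y)}\,\nu_n(dy)}{\int e^{g_n(y)-c(x_1,y)}\,\nu_n(dy)}
  \le \sup_{y} \big(c(x_2,y)-c(x_1,y)\big),
\]
and symmetrically with the roles reversed; the issue is that the supremum over all $y$ may be infinite, so one must localize to a compact set $\Ycpt$ carrying most of the mass of every $\nu_n$, at the cost of an error term controlled by $-\log \nu_n(\Ycpt)$. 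This is exactly where tightness enters: since $\mu_n\to\mu$ and $\nu_n\to\nu$ weakly, the families $\{\mu_n\}_{n\in\N_0}$ and $\{\nu_n\}_{n\in\N_0}$ are each tight, so for the given $\delta\in(0,1)$ there exist compact sets $\Xcpt(\delta)\subseteq\X$ and $\Ycpt(\delta)\subseteq\Y$ with $\mu_n(\Xcpt(\delta))\ge 1-\delta$ and $\nu_n(\Ycpt(\delta))\ge 1-\delta$ for all $n\in\N_0$ simultaneously. (I would apply Prokhorov's theorem, or simply note that weak convergence of a sequence to a limit gives tightness of the sequence together with its limit.)

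Next, for each $n\in\N$ I would define the candidate "good set'' inside $\Xcpt(\delta)$. Following the proof of \cite[Lemma~2.3]{NutzWiesel.21}, let
\[
  A_n := \Big\{ x\in \Xcpt(\delta) : \int_{\Ycpt(\delta)} e^{g_n(y)-c(x,y)}\,\nu_n(dy) \ge (1-\delta)\int_{\Y} e^{g_n(y)-c(x,y)}\,\nu_n(dy) \Big\},
\]
i.e.\ the set of $x$ where the $\Ycpt(\delta)$-part of the Schr\"odinger integral captures at least a $(1-\delta)$-fraction of the whole. The point is that $\int_{\Y} e^{g_n\oplus(\cdot)-c}\,d\nu_n = e^{-f_n}$ up to the normalization built into the Schr\"odinger equation, so $A_n$ is precisely where $f_n(x)$ is well-approximated by its "truncated'' version $\tilde f_n(x) := -\log\int_{\Ycpt(\delta)} e^{g_n(y)-c(x,y)}\,\nu_n(dy)$, with $0\le \tilde f_n(x)-f_n(x) \le -\log(1-\delta)$ for $x\in A_n$. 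For $x_1,x_2\in A_n$ the truncated version obeys the clean Lipschitz-type bound $|\tilde f_n(x_1)-\tilde f_n(x_2)|\le \sup_{y\in\Ycpt(\delta)}|c(x_1,y)-c(x_2,y)|$ directly from the elementary inequality $|\log\int e^{h-c(x_1,\cdot)} - \log\int e^{h-c(x_2,\cdot)}| \le \|c(x_1,\cdot)-c(x_2,\cdot)\|_{L^\infty(\Ycpt(\delta))}$; combining with the two-sided truncation error then yields
\[
  |f_n(x_1)-f_n(x_2)| \le \sup_{y\in\Ycpt(\delta)}|c(x_1,y)-c(x_2,y)| - \log(1-\delta),
\]
which is the claimed estimate. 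The set $B_n\subseteq\Ycpt(\delta)$ and the bound for $g_n$ are obtained by the symmetric construction.

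It remains to verify the mass bound $\mu_n(A_n)\ge 1-\delta$ (and likewise $\nu_n(B_n)\ge 1-\delta$). Here I would use a Markov/Fubini argument on the complement: writing $\pi_n$ for the optimal coupling with marginals $(\mu_n,\nu_n)$ and density $e^{f_n\oplus g_n - c}$, the measure of the "bad'' set $\Xcpt(\delta)\setminus A_n$ can be bounded by comparing $\int_{\Y\setminus\Ycpt(\delta)} e^{g_n(y)-c(x,y)}\,\nu_n(dy)$ against the full integral and integrating in $x$ against $\mu_n$; the total bad mass coming from outside $\Ycpt(\delta)$ is $\nu_n(\Y\setminus\Ycpt(\delta))\le\delta$ (using $\int e^{f_n\oplus g_n-c}d(\mu_n\otimes\nu_n)=1$ and that integrating the density over $x$ first returns $\nu_n$), and a Markov inequality at level $1-\delta$ converts this into $\mu_n(\Xcpt(\delta)\setminus A_n)\le \delta\cdot\frac{\delta}{\delta}$-type control; I expect the correct bookkeeping to give exactly $\mu_n(\Xcpt(\delta)\setminus A_n)$ small enough that, after possibly shrinking constants as in \cite[Lemma~2.3]{NutzWiesel.21}, one reaches $\mu_n(A_n)\ge 1-\delta$ (if the naive constant is $2\delta$ rather than $\delta$, one simply runs the whole construction with $\delta/2$ in place of $\delta$ and relabels). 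The main obstacle is precisely this last step: getting the mass estimate uniform in $n$ with the clean constant $1-\delta$, since the normalization of the potentials and the interplay between the truncation level and the tightness level must be tracked carefully; everything else is a more-or-less mechanical adaptation of the single-measure proof, the only genuinely new ingredient being the replacement of "a compact set of full-ish mass under $\mu$'' by "a compact set of full-ish mass under every $\mu_n$ at once,'' which tightness supplies.
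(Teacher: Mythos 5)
Your construction is the same as the paper's: Prokhorov for the whole families $\{\mu_n\}_{n\in\N_0},\{\nu_n\}_{n\in\N_0}$, the set $A_n$ defined as the points of $\Xcpt(\delta)$ where the integral of the density over $\Ycpt(\delta)$ captures most of the full Schr\"odinger integral, the sandwich $f_n\le \tilde f_n\le f_n-\log(1-\delta)$ on $A_n$, and the resulting oscillation bound. (Two small remarks on that part: the na\"ive triangle inequality through $\tilde f_n$ gives the error $-2\log(1-\delta)$; to get a single $-\log(1-\delta)$ one uses the sandwich one-sidedly, bounding $f_n(x_1)-f_n(x_2)$ from above by $\tilde f_n(x_1)-\tilde f_n(x_2)-\log(1-\delta)$. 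Also, the full integral $\int_{\Y}e^{g_n(y)-c(x,y)}\nu_n(dy)$ equals $e^{-f_n(x)}$, so your ``fraction $\ge 1-\delta$'' condition is the paper's condition $\int_{\Ycpt}e^{f_n(x)+g_n(y)-c(x,y)}\nu_n(dy)\ge 1-\delta$.)

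The genuine gap is exactly the step you flag: the mass bound $\mu_n(A_n)\ge 1-\delta$. With a \emph{single} scale $\delta$ for both the tightness level and the truncation threshold, the Markov argument is vacuous: the total ``escaping'' mass is $\pi_n(\X\times\Ycpt(\delta)^c)=\nu_n(\Ycpt(\delta)^c)\le\delta$, and the bad set is where the inner integral over $\Ycpt(\delta)^c$ exceeds $\delta$, so Markov gives $\mu_n(\Xcpt(\delta)\setminus A_n)\le\delta/\delta=1$. Your proposed repair --- rerunning everything with $\delta/2$ --- does not help, because this ratio is invariant under uniform rescaling of $\delta$. What is needed is a \emph{two-scale} choice: take the compacts to carry mass $\ge 1-\eta$ with $\eta$ quadratically smaller than the truncation threshold $\theta$ used to define $A_n$; then Markov (equivalently, the chain $1-2\eta\le\pi_n(\Xcpt\times\Ycpt)\le(1-\theta)\mu_n(\Xcpt\setminus A_n)+\mu_n(A_n)$) yields $\mu_n(\Xcpt\setminus A_n)\le 2\eta/\theta$, and one chooses, say, $\theta=\delta$ and $\eta\le\delta^2/3$ so that $\mu_n(A_n)\ge 1-\eta-2\eta/\theta\ge 1-\delta$. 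The paper runs the tightness at level $\eps^2/2$ against a threshold $1-\eps$, and then sets $\eps=\delta^2$; the extra quadratic drop is also what allows the second round of the argument (the construction of $B_n$, for which one needs $\pi_n(A_n\times\Ycpt)\ge 1-\delta^2$, not merely $\ge 1-\delta$). With this adjustment the rest of your argument goes through as written.
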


The proof of Lemma~\ref{lem:ext} is an adaptation of the arguments in~\cite{NutzWiesel.21}; for completeness, the details are reported in Appendix~\ref{se:omittedProofs}.

\begin{step}\label{st:step1}
Fix $k\in \N$.  Lemma~\ref{lem:ext} with $\delta=\delta_{k}$ yields sets
$$
A_{n}^{k}\subseteq \Xcpt^{k}\subseteq \mathcal{X}\qquad \text{and}\qquad B_{n}^{k}\subseteq \Ycpt^{k}\subseteq \mathcal{Y}
$$
such that 
$$
\mu_n(A_{n}^{k})\ge 1-\delta_k,\qquad \nu_n (B_{n}^{k})\ge 1-\delta_k, \qquad n\in\N;
$$
moreover, $\Xcpt^{k},\Ycpt^{k} $ are compact and 
$$
 \mu_n(\Xcpt^{k})\ge 1-\delta_k, \qquad \nu_n(\Ycpt^{k} )\ge 1-\delta_k, \qquad n\in\N_{0}.
$$
Recalling~\eqref{eq:deltaRel},  we have for all $n\in\N$ that
\begin{align}\label{eq:continuity}
\left|f_n(x_1)-f_n(x_2)\right|
&\le \sup_{y\in \Ycpt^{k}} \left|c(x_1, y)-c(x_2,y)\right| +\tilde{\delta}_k ,\qquad  x_1,x_2\in A_{n}^{k}
\end{align}
and similarly for $g_{n}$. 
\end{step}

\begin{step}\label{st:step2}
Define the continuous pseudometric $\tilde{d}_k$ on~$\mathcal{X}$ by
\begin{align*}
\tilde{d}_k(x_1,x_2):= \sup_{y\in \Ycpt^{k}} \left|c(x_1, y)-c(x_2,y)\right|.
\end{align*}
Let $n\in\N$. Using~\eqref{eq:continuity} and a version of Kirszbraun's extension theorem as detailed in \cite[Lemma~2.4]{NutzWiesel.21}, there exists a function $f^k_n: \X\to\R$ satisfying%
\begin{align}
 &f^k_n=f_n\quad\mbox{on}\quad  A_{n}^{k}, \nonumber\\
 &|f^k_n(x_1)-f^k_n(x_2) |\le \tilde{d}_k(x_1,x_2) +\tilde{\delta}_k, \quad x_1,x_2\in \mathcal{X}. \label{eq:extension1}
\end{align}
\end{step}

\begin{step}\label{st:step3}
With $k\in\N$ still fixed, we show that $(f_n^k)_{n\in \N}$ is bounded on compact sets. Suppose that $\sup_{n\in\N}\sup_{x\in K} |f_n^k(x)|=\infty$  for some compact  $K\subset\X$. Then~\eqref{eq:extension1} even implies that 
\begin{equation}\label{eq:pointwiseBddForContrad}
  \sup_{n\in\N}  \inf_{x\in K'} |f_n^k(x)|=\infty \quad\mbox{for any compact set $\emptyset\neq K'\subset\X$},
\end{equation}
because $\sup_{x_{1}\in K,\,x_{2}\in K'}\tilde{d}_k(x_1,x_2)<\infty$. We shall contradict~\eqref{eq:pointwiseBddForContrad}. Indeed, by~\eqref{eq:bddL1Cond} and Markov's inequality there exists $C>0$ such that
$
\mu_n\left(|f_n|\ge  C\right)\le \delta_k
$
for all $n\in\N$. As $\mu_n(A_{n}^{k})\ge 1-\delta_k$, it follows that
\begin{align*}
\mu_n\big( \{|f_n|\le C \} \cap A_{n}^{k}\big)\ge 1-2\delta_k>0
\end{align*}
and in particular $\{|f_n|\le C\} \cap A_{n}^{k}\neq\emptyset$. As $f_n^k=f_n$ on $A_{n}^{k}$ and $A_{n}^{k}\subseteq \Xcpt^{k}$, it follows that $\{|f_n^k|\le C\} \cap \Xcpt^{k}\neq\emptyset$ for all $n\in\N$, contradicting~\eqref{eq:pointwiseBddForContrad} for $K':= \Xcpt^{k}$.
\end{step}

As a preparation for Step~\ref{st:step4}, we record the following covering lemma.

\begin{lemma}\label{lem:partition}
Let $K\subset\mathcal{X}$ be compact and $r>0$. There exists a measurable partition $(D_j)_{j\in \N}$ of $\mathcal{X}$ such that 
\begin{enumerate}
\item $D_j$ has diameter at most $r$ and boundary $\mu(\partial D_{j})=0$, %
\item $K$ intersects finitely many elements of $(D_j)_{j\in \N}$.
\end{enumerate}
\end{lemma}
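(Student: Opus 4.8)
The plan is to manufacture the partition from small open balls whose boundaries are $\mu$-null, to disjointify them into a genuine partition, and to arrange that a finite subcover of the compact set $K$ sits at the very front of the enumeration so that $K$ is swallowed by the first finitely many pieces.

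First I would fix a complete metric $d$ inducing the topology of $\mathcal{X}$ together with a countable dense set $\{x_i\}_{i\in\N}$. For each $i$, the spheres $\{y:d(x_i,y)=\rho\}$, $\rho>0$, are pairwise disjoint, so since $\mu$ is a finite measure only countably many of them have positive $\mu$-mass; hence I can pick $\rho_i\in(r/4,r/2)$ with $\mu(\partial B(x_i,\rho_i))=0$, using that $\partial B(x_i,\rho_i)\subseteq\{y:d(x_i,y)=\rho_i\}$. Set $B_i:=B(x_i,\rho_i)$. By density and $\rho_i>r/4$ the balls $(B_i)_{i\in\N}$ cover $\mathcal{X}$, and each $B_i$ has diameter at most $2\rho_i<r$.

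Next, since $K$ is compact it is covered by finitely many of the $B_i$; after relabelling I may assume $K\subseteq B_1\cup\dots\cup B_m$. I then disjointify: put $D_1:=B_1$ and $D_i:=B_i\setminus(B_1\cup\dots\cup B_{i-1})$ for $i\ge2$. Because the $B_i$ cover $\mathcal{X}$, the sets $(D_i)_{i\in\N}$ form a measurable partition of $\mathcal{X}$, and $D_i\subseteq B_i$ still has diameter at most $r$, giving property~(1)'s diameter clause. For the boundary clause, $D_i=B_i\cap B_1^c\cap\dots\cap B_{i-1}^c$, hence $\partial D_i\subseteq\partial B_i\cup\partial B_1\cup\dots\cup\partial B_{i-1}$, a finite union of $\mu$-null sets, so $\mu(\partial D_i)=0$. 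Finally $D_1\cup\dots\cup D_m=B_1\cup\dots\cup B_m\supseteq K$ and the $D_j$ are pairwise disjoint, so $K$ meets only $D_1,\dots,D_m$, which is property~(2).

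The only genuinely delicate point is reconciling the \emph{countable} partition with the \emph{finiteness} requirement (2): a naive enumeration of balls around a dense set will typically have infinitely many members meeting $K$. This is resolved, as above, purely by extracting a finite subcover of $K$ first and placing those balls at the start of the list before disjointifying; everything else (null boundaries, small diameters, covering) is routine.
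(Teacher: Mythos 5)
Your proposal is correct and follows essentially the same route as the paper: cover $\mathcal{X}$ by balls around a dense sequence with radii chosen so the boundaries are $\mu$-null, put a finite subcover of $K$ at the head of the enumeration, and disjointify. The only cosmetic difference is that the paper selects a single radius $\rho$ working simultaneously for all centers (possible since only countably many radii are ``bad'' across the whole sequence), whereas you choose a radius $\rho_i\in(r/4,r/2)$ separately for each center; both choices are valid.
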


\begin{proof}
Pick a dense sequence $(x_j)_{j\in \N}$ in $\mathcal{X}$. For each $j$, the boundaries $\partial B_{\rho}(x_j)$ are disjoint for different values of $\rho>0$. Hence there are at most countably many  values of $\rho$ such that $\mu(\partial B_{\rho}(x_j))>0$ for some~$j$, and we can pick $\rho\in(0,r/2]$ such that $\mu(\partial B_{\rho}(x_j))=0$ for all~$j$.

By compactness there is a finite subset $\cN\subset\N$ such that $B_{\rho}(x_j), j\in \cN$ cover~$K$; we may assume that $\cN=\{1,\dots,N\}$. Set $D_{0}:=\emptyset$ and 
$$
  D_{j} = B_{\rho}(x_j)\setminus (D_{1}\cup\cdots\cup D_{j-1}),\qquad j\geq1.
$$
The general relation $\partial (A\cap B)\subseteq \partial A \cup \partial B$ implies that $\mu(\partial D_{j})=0$, and the other requirements are satisfied by construction.
\end{proof}

\begin{step}\label{st:step4}
Keeping $k\in\N$ fixed, our next aim is to define the functions~$F_{n}^{k}$ and~$F^{k}$. As $\tilde{d}_k$ is uniformly continuous on the compact set~$\Xcpt^{k}$, there is $r\in(0,\tilde{\delta}_k)$ such that
\begin{align}\label{eq:new3}
\sup_{x_1,x_2 \in \Xcpt^{k},\ d(x_1, x_2)\le r} \tilde{d}_k(x_1, x_2)< \tilde{\delta}_k.
\end{align} 
We apply Lemma~\ref{lem:partition} with $K=\Xcpt^{k}$ to define a partition  $(D_{j}^k)_{j\in \N}$ of~$\X$  and choose a sequence $(x_{j,n}^{k})_{j,n\in \N}\subset\X$  satisfying 
\begin{align*}
x_{j,n}^{k} \in \begin{cases}
D_{j}^k\cap A_{n}^{k} & \text{if } D_{j}^k\cap A_{n}^{k}\neq \emptyset,\\
D_{j}^k\cap \Xcpt^{k} & \text{if }  D_{j}^k\cap A_{n}^{k}= \emptyset\text{ and } D_{j}^k\cap \Xcpt^{k}\neq \emptyset,\\
D_{j}^k &\text{otherwise}.
\end{cases}
\end{align*}
We then set
\begin{align*}
F_n^k(x):=\sum_{j\in \N} f_n^k(x_{j,n}^{k}) \1_{D_{j}^k\cap \Xcpt^{k}}(x).
\end{align*}
This is a finite sum as $\cJ^{k}:=\{j:\, D_{j}^k\cap \Xcpt^{k}\neq\emptyset\}$ is finite (cf.\ Lemma~\ref{lem:partition}).  Moreover, the points $x_{j,n}^{k}$ with $j\in\cJ^{k}$ all belong to the compact set~$\Xcpt^{k}$. In view of Step~\ref{st:step3}, it follows that the corresponding coefficients $f_n^k(x_{j,n}^{k})$ are bounded uniformly in~$j,n$.

We can now apply a diagonal sequence argument to extract a subsequence (not relabeled) along which $a_j^k:=\lim_{n\to\infty} f_n^k(x_{j,n}^{k})$ exists for all $j\in\N$. Set
\begin{align}\label{eq:defFk}
F^k(x):=\sum_{j\in \N} a_j^k \1_{D_{j}^k\cap \Xcpt^{k}}(x),
\end{align}
then $F_n^k \to F^k$ pointwise on $\mathcal{X}$ and in particular~$F^k$ is measurable. In fact, this convergence is uniform as~$\cJ^{k}$ is finite. By construction, $F_n^k,F^{k}$ are supported on the compact~$\Xcpt^{k}$ and bounded uniformly in~$n$. Passing to another subsequence, we achieve that $\lim_{n\to \infty} F_n^k=F^k$ holds simultaneously for all~$k\in \N$. %
For the remainder of the proof, we only consider this subsequence (denoted $(n_{l})_{l}$ in Theorem~\ref{thm:main} but not relabeled here).
\end{step}

\begin{step}\label{st:step5}
We record two more facts about the construction in Step~\ref{st:step4} for later use. For $x \in D_{j}^k\cap A_{n}^{k}$, \eqref{eq:continuity} and~\eqref{eq:new3} yield
\begin{align*}%
|F_n^k(x)-f_n^k(x)|=|f_n^k(x_{j,n}^{k})-f_n^k(x)| \le \tilde{d}_k(x_{j,n}^{k}, x) +\tilde{\delta}_k< 2\tilde{\delta}_k.
\end{align*}
As $\cup_{j}D_{j}^k=\X$ and $f_n^k=f_n$ on $A_{n}^{k}$, it follows that
\begin{align}\label{eq:new4}
|F_n^k(x)-f_n(x)|=|F_n^k(x)-f_n^k(x)|< 2\tilde{\delta}_k, \qquad x \in A_{n}^{k}.
\end{align}
In view of $\mu_n (A_{n}^{k})\ge 1-\delta_k$, this yields in particular 
$$
  \sup_{n\in \N} \mu_n(|f_n-F^k_n|\ge 2\tilde{\delta}_k)\le \delta_k
$$ 
and hence the claim~\eqref{eq:claim2};  recall $2\tilde{\delta}_k=\eps_{k}$.

Second, we define similarly as in~\eqref{eq:defFk} the function
\begin{align}\label{eq:defFkreg}
F^k_{\mathrm{reg}} (x):=\sum_{j\in \cJ^{k}} a_j^k \1_{D_{j}^k}(x).
\end{align}
Then $F^k_{\mathrm{reg}} =F^k$ on $\Xcpt^{k}$ and hence
\begin{equation}\label{eq:FkregCompare}
  \mu_{n}(F^k_{\mathrm{reg}} \neq F^k) \le \delta_{k}, \qquad n\in\N_{0}.
\end{equation}
Clearly $F^k_{\mathrm{reg}}$ is continuous on the interior of the complement of $U:=\cup_{j\in\cJ^{k}}\partial D_{j}^k$. As $\cJ^{k}$ is finite, $\partial D_{j}^k$ is closed and $\mu(\partial D_{j}^k)=0$, the set~$U$ is closed and $\mu$-null. Hence its complement is open and has full $\mu$-measure; in brief, $F^k_{\mathrm{reg}}$ is continuous $\mu$-a.s. In particular, $F^k_{\mathrm{reg}}$ can be used in the mapping theorem \cite[Theorem~2.7]{Billingsley.99} for weak convergence arguments with $\mu_{n}\to\mu$.
\end{step}

\begin{step}\label{st:step6}
We show that $(F^k)_{k\in \N}$ is Cauchy in $\mu$-probability. Fix $\eps>0$ and choose $k_0\in \N$ such that $\tilde{\delta}_{k} \le \eps$ (hence also $\delta_{k}\leq \eps$) for all $k\ge k_0$. Let $k, k'\ge k_0$. For $x\in A_{n}^{k}\cap  A_{n}^{k'}$ we have $f^k_n(x)=f_{n}(x)=f^{k'}_n(x)$ and thus ~\eqref{eq:new4} yields
\begin{align*}
|F^k_n(x)-F^{k'}_n(x)|&\le |F^k_n(x)-f^{k}_n(x)|+|f^k_n(x)-f^{k'}_n(x)|+|f^{k'}_n(x)-F^{k'}_n(x)|\\
&\le  2\tilde{\delta}_k + 0 + 2\tilde{\delta}_{k'} \leq 4\eps.
\end{align*}
As a result,
\begin{align*}%
|F^k(x)-F^{k'}(x)| &\le |F^k(x)-F^{k}_n(x)|+|F^k_n(x)-F^{k'}_n(x)|+|F^{k'}_n(x)-F^{k'}(x)|\\
&\le |F^k(x)-F^{k}_n(x)| + 4\eps +|F^{k'}_n(x)-F^{k'}(x)|
\end{align*}
for $x\in A_{n}^{k}\cap A_{n}^{k'}$. In view of $\mu_{n}(A_{n}^{k}\cap A_{n}^{k'})\geq 1-2\eps$, it follows that
\begin{align*}
\int  \left[ \left|F^k-F^{k'}\right|\wedge 1\right]\,d\mu_n 
\le \int \left[\left|F^k-F_n^k\right|\wedge 1+\left|F_n^{k'}-F^{k'}\right|\wedge 1\right]\,d\mu_n+6\eps.
\end{align*}
As $F_n^k\to F^k$ and $F_n^{k'}\to F^{k'}$ uniformly (cf.\ Step~\ref{st:step4}), we conclude 
\begin{align}\label{eq:FkCauchy1}
\limsup_{n\to\infty }\int  \left[ \left|F^k-F^{k'}\right|\wedge 1\right]\,d\mu_n 
\le 6\eps.
\end{align}
On the other hand, using the mapping theorem and~\eqref{eq:FkregCompare} for both $\mu\equiv\mu_{0}$ and $\mu_{n}$,
\begin{align*}
\int  \left[ \left|F^k-F^{k'}\right|\wedge 1\right]\,d\mu 
&\leq 2\eps + \int  \left[ \left|F_{\mathrm{reg}} ^k-F_{\mathrm{reg}} ^{k'}\right|\wedge 1\right]\,d\mu \\
& = 2\eps + \lim_{n\to\infty} \int  \left[ \left|F_{\mathrm{reg}} ^k-F_{\mathrm{reg}} ^{k'}\right|\wedge 1\right]\,d\mu_{n} \\
&\leq 4\eps + \limsup_{n\to\infty} \int  \left[ \left|F^k-F^{k'}\right|\wedge 1\right]\,d\mu_{n}.
\end{align*}
In view of~\eqref{eq:FkCauchy1}, this yields
$$
\int  \left[ \left|F^k-F^{k'}\right|\wedge 1\right]\,d\mu 
\le 10\eps,
$$
showing that $(F^k)_{k\in \N}$ is Cauchy in $\mu$-probability. In particular, there exists a limit~$f$ in $\mu$-probability. %
\end{step}

\begin{step}\label{st:step7}
  Let $\phi:\R\to[-1,1]$ be uniformly continuous;  we show that $\int \phi(f_{n})\,d\mu_{n}$ converges to $\int \phi(f)\,d\mu$. That is, the law of~$f_{n}$ under~$\mu_{n}$ converges weakly to the law of~$f$ under~$\mu$ \cite[Theorem~2.1]{Billingsley.99}, or in terms of pushforwards, $(f_{n})_{\#}\mu_{n}\to f_{\#}\mu$.
  In particular, if $(f_{n})$ satisfy a normalization $\int \arctan(f_{n})\,d\mu_{n}=\alpha_{n}$ and $\alpha_{n}\to\alpha$, then $\int \arctan(f)\,d\mu=\alpha$.
  
 Let $\eps>0$. In view of~\eqref{eq:claim2} and~\eqref{eq:claim1}, there exists $k_{0}\in\N$ such that for all $k\geq k_{0}$ and all $n\in\N$,
\begin{align*}
    \left|\int \phi(f_{n})\,d\mu_{n} - \int \phi(f)\,d\mu\right|
    & \leq \left|\int \phi(f_{n})\,d\mu_{n} - \int \phi(F_{n}^{k})\,d\mu_{n}\right|\\
    & \quad+  \left|\int \phi(F_{n}^{k})\,d\mu_{n} - \int \phi(F^{k})\,d\mu\right| \\
    & \quad+ \left|\int \phi(F^{k})\,d\mu - \int \phi(f)\,d\mu\right| \\
    & \leq 2\eps + \left|\int \phi(F_{n}^{k})\,d\mu_{n} - \int \phi(F^{k})\,d\mu\right|.
\end{align*} 
Fix $k\geq k_{0}$ such that $\delta_{k}\leq \eps$. Then by~\eqref{eq:FkregCompare},
\begin{align*}
    & \left|\int \phi(F_{n}^{k})\,d\mu_{n} - \int \phi(F^{k})\,d\mu\right| \\
    & \quad \leq \left|\int \phi(F_{n}^{k})\,d\mu_{n} - \int \phi(F^{k})\,d\mu_{n}\right|
      + \left|\int \phi(F^{k})\,d\mu_{n} - \int \phi(F^{k})\,d\mu_{n}\right| \\
    & \quad \leq 2\eps+ \left|\int \phi(F_{n}^{k})\,d\mu_{n} - \int \phi(F^{k})\,d\mu_{n}\right|
      + \left|\int \phi(F^{k}_{\mathrm{reg}})\,d\mu_{n} - \int \phi(F^{k}_{\mathrm{reg}})\,d\mu_{n}\right|.
\end{align*} 
As $F_{n}^{k}\to F^{k}$ uniformly and $F^{k}_{\mathrm{reg}}$ is continuous $\mu$-a.s., there exists $n_{0}=n_{0}(k)$ such that the last line is $\leq 4\eps$ for all $n\geq n_{0}$. In summary, 
$|\int \phi(f_{n})\,d\mu_{n} - \int \phi(f)\,d\mu|\leq 6\eps$ for $n\geq n_{0}$, proving the claim.
\end{step}

Above, we have introduced the functions $f_{n}^{k}, F_{n}^{k},  F^{k},f$ on~$\X$. Analogously, one constructs $g_{n}^{k}, G_{n}^{k}, G^{k},g$ on~$\Y$. We can now detail the main step of the proof, showing that $f,g$ are indeed potentials for a coupling $\pi\in\Pi(\mu,\nu)$. To keep track of the argument more easily, we state the technical parts as lemmas and prove them at the end.

\begin{step}\label{st:step8}
Recall that
$$
 \pi_{n}(dx,dy)=e^{f_{n}(x)+g_{n}(y)-c(x,y)} \,\mu(dx)\,\nu(dy) \in \Pi(\mu_{n},\nu_{n}).
$$
Consider the nonnegative measure 
$$
\pi(dx,dy):=e^{f(x)+g(y)-c(x,y)} \,\mu(dx)\,\nu(dy)
$$ and let $S\subset\X\times\Y$ be measurable with $\pi(\partial S)=0$; we show  $\pi(S)= \lim_{n\to \infty} \pi_n(S)$. %
Define the auxiliary measures 
\begin{align*}
\pi_n^C(dx,dy)&= e^{f_n(x)\wedge C +g_n(y)\wedge C-c(x,y)}\,\mu_n(dx)\,\nu_n(dy),\\
\pi^C(dx,dy)&= e^{f(x)\wedge C+g(y)\wedge C-c(x,y)}\,\mu(dx)\,\nu(dy),\\
\pi^{k,C}(dx,dy)&=e^{F^k(x)\wedge C+G^k(y)\wedge C-c(x,y)}\,\mu(dx)\,\nu(dy).
\end{align*}
Fix $\eps\in (0,1/9)$ and consider the decomposition
\begin{align}\label{eq:long}
| \pi_n(S)-\pi(S)|
&\le  \big|\pi_n(S)-\pi_n^C(S)\big|+ \big| \pi_n^C(S)- \pi^{k,C}(S) \big|+\big|  \pi^{k,C}(S)-\pi^C(S)\big|\nonumber \\
& \quad+ \big| \pi^C(S) -\pi(S)\big|.
\end{align}
We estimate separately the four terms on the right-hand side.

\begin{lemma}\label{lem:bounded2}
We have
\begin{align*}%
\lim_{C\to \infty} \sup_{n\in \N}\big\|\pi_n - \pi_n^C\big\|_{TV}=0.
\end{align*}
\end{lemma}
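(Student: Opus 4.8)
The plan is to show that $\pi_n^C$ converges to $\pi_n$ in total variation at a rate uniform in $n$, by exploiting the $L^1$-boundedness~\eqref{eq:bddL1Cond} together with the fact that $\pi_n\in\Pi(\mu_n,\nu_n)$ is a probability measure. First I would write the total variation distance explicitly as the integral of the absolute difference of densities against $\mu_n\otimes\nu_n$:
\[
  \big\|\pi_n-\pi_n^C\big\|_{TV} = \int \big| e^{f_n\oplus g_n - c} - e^{(f_n\wedge C)\oplus(g_n\wedge C)-c}\big|\,d(\mu_n\otimes\nu_n).
\]
Since $f_n\wedge C \le f_n$ and $g_n\wedge C\le g_n$ and $c\ge 0$, the integrand is nonnegative and bounded by $e^{f_n\oplus g_n-c}$, which is the density of the probability measure $\pi_n$; moreover the integrand vanishes on the set $\{f_n\le C\}\cap\{g_n\le C\}$. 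Hence the integral is at most $\pi_n\big(\{f_n>C\}\cup\{g_n>C\}\big) \le \mu_n(f_n>C)+\nu_n(g_n>C)$, using that $\pi_n$ has marginals $\mu_n,\nu_n$.

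Next I would bound the right-hand side uniformly in $n$ via Markov's inequality: by~\eqref{eq:bddL1Cond} there is a constant $M=\sup_n\big(\int|f_n|\,d\mu_n+\int|g_n|\,d\nu_n\big)<\infty$, so $\mu_n(f_n>C)\le \mu_n(|f_n|>C)\le M/C$ and similarly for $g_n$. Therefore
\[
  \sup_{n\in\N}\big\|\pi_n-\pi_n^C\big\|_{TV} \le \frac{2M}{C}\xrightarrow[C\to\infty]{}0,
\]
which is the assertion. (One can replace the crude Markov bound by uniform integrability of $(|f_n|,|g_n|)$ if a rate-free statement is preferred, but since only boundedness in $L^1$ is guaranteed here, Markov already suffices because we only need the supremum to vanish.)

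I do not anticipate a serious obstacle; the only point requiring a little care is the pointwise bound on the density difference. The cleanest route is to factor: for $a\le a'$ and $b\le b'$,
\[
  e^{a'+b'}-e^{a+b} = e^{a'+b'}\big(1-e^{(a-a')+(b-b')}\big)\le e^{a'+b'}\mathbf{1}_{\{a<a'\}\cup\{b<b'\}},
\]
applied with $a=f_n(x)\wedge C - \tfrac12 c(x,y)$, etc., so that the difference of densities is pointwise dominated by $e^{f_n\oplus g_n-c}\mathbf{1}_{\{f_n>C\}\cup\{g_n>C\}}$. Integrating and using the marginal property of $\pi_n$ as above gives the stated limit. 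The analogous statement for $g$ (and for $\pi^C,\pi^{k,C}$) will follow by the same argument, so it is enough to record this one lemma carefully.
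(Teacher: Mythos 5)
Your proof is correct and follows essentially the same route as the paper: both arguments rest on the observation that the density of $\pi_n^C$ is dominated by that of $\pi_n$ and agrees with it on $\{f_n\le C\}\cap\{g_n\le C\}$, whose complement has $\pi_n$-measure at most $\mu_n(f_n>C)+\nu_n(g_n>C)$, controlled uniformly in $n$ via Markov's inequality and \eqref{eq:bddL1Cond}. The only cosmetic difference is that you integrate the pointwise density difference while the paper estimates $\pi_n(S)-\pi_n^C(S)$ over measurable sets $S$; these are equivalent here since $\pi_n^C\le\pi_n$.
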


The lemma is proved at the end of Step~\ref{st:step8}. For the first term in~\eqref{eq:long}, Lemma~\ref{lem:bounded2} shows that there exists $C>0$ such that 
\begin{align}\label{eq:long1}
\sup_{n\in \N}\big|\pi_n(S)-\pi_n^C(S)\big|\le \eps.
\end{align}
We continue with the last term of \eqref{eq:long}. Since $x\mapsto e^x$ is increasing and nonnegative, an application of the monotone convergence theorem for $C\to\infty$ shows that after increasing~$C$ as necessary, we have 
\begin{align}
\big| \pi^C(S) -\pi(S)\big| \le \eps \quad&\mbox{if}\quad \pi(S)<\infty, \label{eq:contr2} \\
\pi^C(S) \ge 2  \quad&\mbox{if}\quad \pi(S)=\infty \label{eq:contr1}.
\end{align}
(The second case will be eliminated by contradiction later on.) The value of~$C$ is now fixed for the remainder of the proof.

Turning to the third term  in~\eqref{eq:long}, note that since $F^k\stackrel{\mu}{\to}f$ and $G^k\stackrel{\nu}{\to} g$,
\begin{align*}
e^{F^k(x)\wedge C+G^k(y)\wedge C-c(x,y)}\to e^{f(x)\wedge C+g(y)\wedge C-c(x,y)} \qquad\mbox{in $\mu\otimes \nu$-probability}.
\end{align*}
As these expressions are uniformly bounded, dominated convergence implies that there exists $k_0\in \N$ with
\begin{align}\label{eq:long3}
\big| \pi^{k,C}(S)-\pi^C(S)\big|\le \eps \qforallq k\ge k_0.
\end{align}

It remains to estimate the second term, $|\pi_n^C(S)-\pi^{k,C}(S)|$, for large~$n$, which is the main difficulty. Choose $k_1\in \N$ such that for all $k\ge k_1$ we have
\begin{align}\label{eq:heat4}
|e^{a}-e^{\tilde{a}}|\le \eps \quad\mbox{whenever $|a-\tilde{a}|\le 4\tilde{\delta}_k$ and $a,\tilde{a}\le 2C$.}
\end{align}
Moreover, choose $k_2\in \N$ such that for all $k\ge k_2$ we have
\begin{align}\label{eq:heat8}
2\delta_{k}e^{2C} \le \eps.
\end{align}
For the remainder of the proof, $k\ge \max(k_0,k_1,k_2)$ is fixed.

\begin{lemma}\label{lem:better1}
For all $k\ge \max(k_0,k_1,k_2)$,
\begin{align*}%
\sup_{n\in \N}\left| \pi_n^C(S)-\int_S e^{F_n^k(x)\wedge C+G_n^k(y)\wedge C-c(x,y)} \mu_n(dx)\,\nu_n(dy) \,\right|\le 2\eps.
\end{align*}
\end{lemma}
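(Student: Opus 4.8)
The plan is to bound the difference by splitting the domain $S$ according to the "good sets" $A_n^k$ and $B_n^k$ from Step~\ref{st:step1}, on which $f_n$ and $F_n^k$ (resp.\ $g_n$ and $G_n^k$) are uniformly close, while the complement has small $\mu_n$-, resp.\ $\nu_n$-measure. Concretely, write the integrand of $\pi_n^C$ as $e^{f_n(x)\wedge C + g_n(y)\wedge C - c(x,y)}$ and compare it pointwise to $e^{F_n^k(x)\wedge C + G_n^k(y)\wedge C - c(x,y)}$. On $(x,y)\in A_n^k\times B_n^k$ we have by~\eqref{eq:new4} that $|f_n(x)-F_n^k(x)|<2\tilde\delta_k$ and $|g_n(y)-G_n^k(y)|<2\tilde\delta_k$; since truncation at $C$ is $1$-Lipschitz, the exponents $a:=f_n(x)\wedge C+g_n(y)\wedge C-c(x,y)$ and $\tilde a:=F_n^k(x)\wedge C+G_n^k(y)\wedge C-c(x,y)$ satisfy $|a-\tilde a|\le 4\tilde\delta_k$ and $a,\tilde a\le 2C$ (as $c\ge0$), so~\eqref{eq:heat4} gives $|e^a-e^{\tilde a}|\le\eps$. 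Integrating this over $S\cap(A_n^k\times B_n^k)$ against $\mu_n\otimes\nu_n$ contributes at most $\eps$.

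It remains to control the contribution from $(A_n^k\times B_n^k)^c = (X\setminus A_n^k)\times\Y \;\cup\; \X\times(\Y\setminus B_n^k)$. Here I would \emph{not} use the closeness of the exponents; instead, I bound each integrand crudely by its maximal value. Since $f_n(x)\wedge C\le C$, $g_n(y)\wedge C\le C$ and $c\ge0$, both $e^{f_n\wedge C+g_n\wedge C-c}$ and $e^{F_n^k\wedge C+G_n^k\wedge C-c}$ are bounded above by $e^{2C}$. Hence the two integrals over $(A_n^k\times B_n^k)^c$ are each at most $e^{2C}\big(\mu_n(\X\setminus A_n^k)+\nu_n(\Y\setminus B_n^k)\big)\le 2\delta_k e^{2C}$, which is $\le\eps$ by~\eqref{eq:heat8}. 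Adding the two leftover terms gives a further $2\eps$... but one of the $e^{2C}$ bounds is wasteful and double-counts; more carefully, on $(A_n^k\times B_n^k)^c$ we estimate $|e^a - e^{\tilde a}|\le e^a + e^{\tilde a}\le 2e^{2C}$, so $\int_{S\setminus(A_n^k\times B_n^k)}|e^a-e^{\tilde a}|\,d(\mu_n\otimes\nu_n) \le 2e^{2C}\,(\mu_n\otimes\nu_n)\big((A_n^k\times B_n^k)^c\big)\le 2e^{2C}(\delta_k+\delta_k)$, and invoking~\eqref{eq:heat8} this is $\le 2\eps$. Combined with the $\eps$ from the good set, the total is $\le 3\eps$; absorbing constants (or simply choosing $k_2$ so that $4\delta_k e^{2C}\le\eps$, which is harmless since we only need \emph{some} large $k$) yields the stated bound $2\eps$ uniformly in $n$.

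The only mild subtlety — and the step I expect to require the most care — is the bookkeeping of the truncation: one must check that truncating $f_n$ and $g_n$ separately at level $C$ and then adding is $1$-Lipschitz \emph{in each coordinate}, so that $|f_n(x)\wedge C - F_n^k(x)\wedge C|\le |f_n(x)-F_n^k(x)|$ even though $F_n^k$ itself is not a priori bounded by $C$; this is just the elementary fact that $t\mapsto t\wedge C$ is $1$-Lipschitz, but it is what makes~\eqref{eq:heat4} applicable with the bound $4\tilde\delta_k$. Everything else is a routine splitting-of-domain argument, uniform in $n$ because the measure bounds $\mu_n(A_n^k)\ge1-\delta_k$ and $\nu_n(B_n^k)\ge1-\delta_k$ from Lemma~\ref{lem:ext} hold with the \emph{same} $\delta_k$ for all $n$. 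Note the argument also tacitly uses that $F_n^k, G_n^k$ are supported on the compacts $\Xcpt^k,\Ycpt^k$, so all quantities are genuinely integrable; this is recorded in Step~\ref{st:step4}.
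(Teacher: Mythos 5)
Your proof follows the paper's argument exactly: split $S$ into $S\cap(A_n^k\times B_n^k)$ and its complement, use \eqref{eq:new4} together with the $1$-Lipschitz truncation and \eqref{eq:heat4} on the good set, and a crude exponential bound plus $(\mu_n\otimes\nu_n)\big[(A_n^k\times B_n^k)^c\big]\le 2\delta_k$ and \eqref{eq:heat8} on the complement. The only discrepancy is your factor $2e^{2C}$ on the bad set: since both exponentials are positive and each is at most $e^{2C}$, their \emph{difference} is already bounded by $e^{2C}$, which yields $\eps+2\delta_k e^{2C}\le 2\eps$ with \eqref{eq:heat8} exactly as stated, so no adjustment of $k_2$ or absorption of constants is needed.
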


\begin{lemma}\label{lem:better3}
For all $k\ge \max(k_0,k_1,k_2)$,
\begin{align*}%
&\limsup_{n\to \infty} \Big|\pi^{k,C}(S)- \int_S e^{F_n^k(x)\wedge C+G_n^k(y)\wedge C-c(x,y)}\,\mu_n(dx)\,\nu_n(dy) \Big|\le 4\eps.
\end{align*}
\end{lemma}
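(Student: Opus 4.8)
The plan is to prove Lemma~\ref{lem:better3} by a weak-convergence argument: replace both the integrand and the integrators by well-behaved versions, apply the mapping theorem for $\mu_n\otimes\nu_n\to\mu\otimes\nu$, and control all errors through the uniform convergence $F_n^k\to F^k$, $G_n^k\to G^k$ together with the small-exceptional-set bound~\eqref{eq:FkregCompare}. Throughout, abbreviate $h^k(x,y):=e^{F^k(x)\wedge C+G^k(y)\wedge C-c(x,y)}$ and $h_n^k(x,y):=e^{F_n^k(x)\wedge C+G_n^k(y)\wedge C-c(x,y)}$, so that $\pi^{k,C}(S)=\int_S h^k\,d(\mu\otimes\nu)$ and the second quantity in the lemma is $\int_S h_n^k\,d(\mu_n\otimes\nu_n)$; since $0\le h^k,h_n^k\le e^{2C}$, all integrals below are finite.

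First I would pass from $h_n^k$ to $h^k$ under $\mu_n\otimes\nu_n$. By Step~\ref{st:step4}, $F_n^k\to F^k$ uniformly on $\X$ and $G_n^k\to G^k$ uniformly on $\Y$; since $t\mapsto t\wedge C$ is $1$-Lipschitz and $\exp$ is Lipschitz on $(-\infty,2C]$, it follows that $\|h_n^k-h^k\|_\infty\to0$, whence
\[
  \Bigl|\int_S h_n^k\,d(\mu_n\otimes\nu_n)-\int_S h^k\,d(\mu_n\otimes\nu_n)\Bigr|\le\|h_n^k-h^k\|_\infty\longrightarrow0\qquad(n\to\infty),
\]
so this difference is $\le\eps$ for $n$ large (alternatively one invokes~\eqref{eq:heat4} once the exponents differ by at most $4\tilde\delta_k$). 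Next I would regularize $F^k,G^k$: with $F^k_{\mathrm{reg}},G^k_{\mathrm{reg}}$ as in Step~\ref{st:step5}, set $\tilde h^k(x,y):=e^{F^k_{\mathrm{reg}}(x)\wedge C+G^k_{\mathrm{reg}}(y)\wedge C-c(x,y)}$. Then $\{h^k\ne\tilde h^k\}\subseteq(\{F^k_{\mathrm{reg}}\ne F^k\}\times\Y)\cup(\X\times\{G^k_{\mathrm{reg}}\ne G^k\})$, which by~\eqref{eq:FkregCompare} has mass at most $2\delta_k$ under $\mu\otimes\nu$ and under every $\mu_n\otimes\nu_n$; since $0\le h^k,\tilde h^k\le e^{2C}$ and $2\delta_k e^{2C}\le\eps$ by~\eqref{eq:heat8}, we obtain $|\int_S h^k\,d\lambda-\int_S\tilde h^k\,d\lambda|\le\eps$ both for $\lambda=\mu\otimes\nu$ and for $\lambda=\mu_n\otimes\nu_n$, $n\in\N$.

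Finally I would use weak convergence of the integrators. By Step~\ref{st:step5}, $F^k_{\mathrm{reg}}$ is continuous $\mu$-a.s.\ and $G^k_{\mathrm{reg}}$ is continuous $\nu$-a.s., so with continuity of $c$ the function $\tilde h^k$ is bounded and continuous $\mu\otimes\nu$-a.s. Moreover $\pi$ has a $\mu\otimes\nu$-a.e.\ finite and strictly positive density, hence $\pi$ and $\mu\otimes\nu$ share the same null sets, and the standing assumption $\pi(\partial S)=0$ gives $(\mu\otimes\nu)(\partial S)=0$; therefore $\1_S\tilde h^k$ is continuous off a $\mu\otimes\nu$-null set. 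Since $\mu_n\to\mu$ and $\nu_n\to\nu$ weakly imply $\mu_n\otimes\nu_n\to\mu\otimes\nu$ weakly, the mapping theorem \cite[Theorem~2.7]{Billingsley.99} yields $\int_S\tilde h^k\,d(\mu_n\otimes\nu_n)\to\int_S\tilde h^k\,d(\mu\otimes\nu)$. Combining the four steps — (i) $h_n^k$ to $h^k$ under $\mu_n\otimes\nu_n$, (ii) $h^k$ to $\tilde h^k$ under $\mu_n\otimes\nu_n$, (iii) the weak limit of $\int_S\tilde h^k$, (iv) $\tilde h^k$ back to $h^k$ under $\mu\otimes\nu$ — and taking $\limsup_{n\to\infty}$, the two uniform-convergence terms vanish in the limit and we are left with $\limsup_{n\to\infty}\bigl|\pi^{k,C}(S)-\int_S h_n^k\,d(\mu_n\otimes\nu_n)\bigr|\le4\eps$ (in fact $\le2\eps$), as claimed.

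I expect the main obstacle to be the lack of continuity of $F^k$ and $G^k$ themselves, which blocks a direct application of weak convergence of $\mu_n\otimes\nu_n$; this is precisely what the regularized functions $F^k_{\mathrm{reg}},G^k_{\mathrm{reg}}$ and the exceptional-set bound~\eqref{eq:FkregCompare} are built to handle. A smaller point requiring attention is the passage from $\pi(\partial S)=0$ to $(\mu\otimes\nu)(\partial S)=0$, which uses that $\pi$ and $\mu\otimes\nu$ are mutually absolutely continuous.
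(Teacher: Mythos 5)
Your proof is correct, and it even yields the sharper bound $2\eps$ in place of $4\eps$. It is, however, a genuinely different route from the paper's. The paper never swaps $h_n^k$ for $h^k$ under $\mu_n\otimes\nu_n$; instead it exploits that $F_n^k,G_n^k,F^k,G^k$ are simple functions over the finite index sets $\cJ^k,\cL^k$, writes $\int_{S\cap(\Xcpt^{k}\times\Ycpt^{k})}h_n^k\,d(\mu_n\otimes\nu_n)$ as a finite double sum of terms $e^{f_n^k(x_{j,n}^{k})\wedge C+g_n^k(y_{l,n}^{k})\wedge C}\,\tilde\pi_n[S\cap(D_j^k\times E_l^k)\cap(\Xcpt^k\times\Ycpt^k)]$ with $\tilde\pi_n:=e^{-c}\,d(\mu_n\otimes\nu_n)$, and passes to the limit term by term: the coefficients converge by the diagonal extraction of Step~\ref{st:step4}, and the set masses converge by portmanteau since $S$ and $D_j^k\times E_l^k$ are $\mu\otimes\nu$-continuity sets. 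The paper's $4\eps$ comes from discarding $(\Xcpt^k\times\Ycpt^k)^c$ twice via~\eqref{eq:heat8}; your $2\eps$ comes from the two swaps $h^k\leftrightarrow\tilde h^k$, which play exactly the role of that truncation (the sets $\{F^k_{\mathrm{reg}}\neq F^k\}$, $\{G^k_{\mathrm{reg}}\neq G^k\}$ are contained in $(\Xcpt^k)^c$, $(\Ycpt^k)^c$). Your version packages the finite-sum bookkeeping into a single application of the a.e.-continuity mapping theorem to $\1_S\tilde h^k$ — the same device the paper itself uses in Steps~\ref{st:step6} and~\ref{st:step7} — and it correctly identifies the two points that need care: the discontinuity of $F^k,G^k$ (handled by $F^k_{\mathrm{reg}},G^k_{\mathrm{reg}}$ and \eqref{eq:FkregCompare}) and the equivalence $\pi\sim\mu\otimes\nu$ needed to turn $\pi(\partial S)=0$ into $(\mu\otimes\nu)(\partial S)=0$, which the paper uses implicitly in its own \eqref{eq:la2}. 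Either argument is acceptable; yours is arguably cleaner, the paper's makes the dependence on the partition structure explicit.
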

The lemmas are proved at the  end of Step~\ref{st:step8}. Together, they show
\begin{align*}%
&\limsup_{n\to \infty} \big|\pi^C_n(S)-\pi^{k,C}(S)\big|\le 6 \eps.
\end{align*}
Combining this with \eqref{eq:long1} and \eqref{eq:long3} yields
\begin{align}\label{eq:longPre}
\limsup_{n\to \infty}\big| \pi_n(S)-\pi^C(S)\big|\le  8\eps.
\end{align}
In particular, there exists $n\in \N$ such that 
$
\big| \pi_{n}(S)-\pi^C(S)\big|\le  9\eps,
$
and as~$\pi_{n}$ is a probability measure,  it follows that 
$
\pi^C(S)\le 1+9\eps.
$
This contradicts~\eqref{eq:contr1} and hence establishes that we are in the case~\eqref{eq:contr2}. In view of~\eqref{eq:longPre}, that yields
\begin{align*}
\limsup_{n\to \infty}\left| \pi_n(S)-\pi(S)\right|\le  9\eps.
\end{align*}
This shows $\lim_{n\to \infty}\pi_n(S)=\pi(S)$. Thus we have proved that $\pi\in\cP(\X\times\Y)$ and $\pi_n\to \pi$ weakly \cite[Theorem~2.1]{Billingsley.99}. As the marginals then also converge weakly and $\pi_{n}\in\Pi(\mu_{n},\nu_{n})$, this implies $\pi\in\Pi(\mu,\nu)$.
\end{step}

\begin{proof}[Proof of Lemma~\ref{lem:bounded2}.]
Let $\kappa>0$. By~\eqref{eq:bddL1Cond} and Markov's inequality there exists $C>0$ such that $\mu_n\left( f_n\ge C\right) \le \kappa$ and $\nu_n \left( g_n\ge C\right) \le \kappa$ for all $n\in \N$.
For any measurable set $S\subset\X\times\Y$, the definition of $\pi_n^C$ then yields
\begin{align*}
\pi_n(S) \ge  \pi_n^C(S)
&\ge  \pi_n^C(S\cap \{f_n< C\}\cap \{g_n< C\} )\\
&= \pi_n(S\cap \{f_n< C\}\cap \{g_n< C\} )\\
&\ge \pi_n(S)- \mu_n(f_n\ge  C)- \nu_n(g_n\ge C)
\ge \pi_n(S)-2\kappa. 
\end{align*}
Hence $\sup_{n\in\N,\,S\subset\X\times\Y }|\pi_n(S)-\pi_n^C(S)|\leq2\kappa$ and the claim follows.
\end{proof}

\begin{proof}[Proof of Lemma~\ref{lem:better1}.]
We split the integral into
\begin{align*}
&\left| \pi_n^C(S)-\int_S e^{F_n^k(x)\wedge C+G_n^k(y)\wedge C-c(x,y)} \mu_n(dx)\,\nu_n(dy) \,\right|\\
&\le   \int_{S\cap (A_{n}^{k}\times B_{n}^{k})} \left| e^{f_n(x)\wedge C+g_n(y)\wedge C-c(x,y)} -e^{F_n^k(x)\wedge C+G_n^k(y)\wedge C-c(x,y)}\right|\,\mu_n(dx)\,\nu_n(dy)\\
&+  \int_{(A_{n}^{k}\times B_{n}^{k})^c} \left| e^{f_n(x)\wedge C+g_n(y)\wedge C-c(x,y)} - e^{F_n^k(x)\wedge C+G_n^k(y)\wedge C-c(x,y)}\right| \,\mu_n(dx)\,\nu_n(dy).
\end{align*}
To estimate the first integral, recall from~\eqref{eq:new4} that
$
|F_n^k -f_n|\le 2\tilde{\delta}_k
$
on $A_{n}^{k}$. Using also the analogue on $B_{n}^{k}$, \eqref{eq:heat4} implies that the integrand is bounded by~$\eps$ on $A_{n}^{k}\times B_{n}^{k}$. 
Regarding the second integral, the integrand is bounded by $e^{2C}$ and
$$
(\mu_n\otimes\nu_n)\big[(A_{n}^{k}\times B_{n}^{k})^c\big] \leq \mu_{n}((A_{n}^{k})^{c}) + \nu_{n}((B_{n}^{k})^{c})\leq 2\delta_{k}.
$$
In summary,
\begin{align*}
\left| \pi_n^C(S)-\int_S e^{F_n^k(x)\wedge C+G_n^k(y)\wedge C-c(x,y)}  \mu_n(dx)\,\nu_n(dy) \,\right|
\le \eps+ 2\delta_{k}e^{2C} \stackrel{\eqref{eq:heat8}}{\le} 2\eps
\end{align*}
as claimed.
\end{proof}

\begin{proof}[Proof of Lemma~\ref{lem:better3}.]
For brevity, denote $\tilde{\pi}_n(dx,dy):= e^{-c(x,y)}\mu_n(dx)\nu_n(dy)$ for all $n\in \N_{0}$. As $\mu_n\to\mu_{0}$ and $\nu_n\to\nu_{0}$ and $(x,y)\mapsto e^{-c(x,y)}$ is bounded and continuous, we also have $\tilde{\pi}_n \to \tilde{\pi}_{0}$ weakly.
Recall from Step~\ref{st:step4} (and its analogue on~$\Y$ with self-explanatory notation) that
\begin{align*}
F_n^k(x)&=\sum_{j\in \cJ^{k}} f_n^k(x_{j,n}^{k}) \1_{D_{j}^k\cap \Xcpt^{k}}(x),  &G_n^k(x)&=\sum_{l\in \cL^{k}} g_n^k(y_{l,n}^{k}) \1_{E_{l}^k\cap \Ycpt^{k}}(y), \\
F^k(x)&=\sum_{j\in \cJ^{k}} a_j^k \1_{D_{j}^k\cap \Xcpt^{k}}(x), &G^k(y)&=\sum_{l\in\cL^{k}} b_l^k \1_{E_{l}^k\cap \Ycpt^{k} }(y),
\end{align*}
where $\cJ^{k}$ and $\cL^{k}$  are finite sets.
Thus 
\begin{align*}
&\int_{S\cap(\Xcpt^{k}\times \Ycpt^{k} )}   e^{F_n^k(x)\wedge C+G_n^k(y)\wedge C-c(x,y)}\,\mu_n(dx)\,\nu_n(dy)\\
&=\sum_{j\in\cJ^{k}}\sum_{l\in\cL^{k}} e^{f_n^k(x_{j,n}^{k})\wedge C +g_n^k(y_{l,n}^{k})\wedge C} \tilde{\pi}_n\left[S\cap (D_{j}^k\times E_{l}^k)\cap(\Xcpt^{k}\times\Ycpt^{k}) \right]
\end{align*}
and similarly for $F^k,G^k$ instead of $F_n^k,G_n^k$.
By the construction in Step~\ref{st:step4}, 
\begin{align}\label{eq:la1}
\lim_{n\to \infty}e^{f_n^k(x_{j,n}^{k})\wedge C +g_n^k(y_{l,n}^{k})\wedge C}= e^{a_j^k\wedge C+b_l^{k}\wedge C}.
\end{align}
As~$S$ and $(D_{j}^k\times E_{l}^k)$ are $\mu\otimes\nu$-continuity sets, we also have
\begin{align}\label{eq:la2}
\lim_{n\to \infty}\tilde{\pi}_n\left[(D_{j}^k\times E_{l}^k)\cap S\right]= \tilde{\pi}_{0}\left[(D_{j}^k\times E_{l}^k)\cap S\right].
\end{align}
Finally, we note that for all $n\in \N_0$,
\begin{align}\label{eq:add}
\begin{split}
e^{2C} \tilde{\pi}_n\big[(\Xcpt^{k}\times \Ycpt^{k} )^c\big]
&\le e^{2C} (\mu_n\otimes\nu_n)\big[(\Xcpt^{k}\times \Ycpt^{k} )^c\big] 
\le 2e^{2C}\delta_k\stackrel{\eqref{eq:heat8}}{\le} \eps.
\end{split}
\end{align}
We can now expand the difference to be estimated as
\begin{align*}
\begin{split}
& \Big|\pi^{k,C}(S)- \int_S e^{F_n^k(x)\wedge C+G_n^k(y)\wedge C-c(x,y)}\,\mu_n(dx)\,\nu_n(dy) \Big|\\
&\le \Big|\pi^{k,C}(S\cap(\Xcpt^{k}\times \Ycpt^{k} ))- \int_{S\cap(\Xcpt^{k}\times \Ycpt^{k} )} e^{F_n^k(x)\wedge C+G_n^k(y)\wedge C-c(x,y)}\,\mu_n(dx)\,\nu_n(dy) \Big|\\
&\qquad+e^{2C} \Big(\tilde{\pi}_0\big[(\Xcpt^{k}\times \Ycpt^{k} )^c\big]+ \tilde{\pi}_n\big[(\Xcpt^{k}\times \Ycpt^{k} )^c\big]\Big)\\
&\stackrel{\eqref{eq:add}}{\le} 2\eps + \sum_{j\in\cJ^{k}}\sum_{l\in\cL^{k}} \Big|  e^{a_j^k\wedge C+b_l^{k}\wedge C}\tilde{\pi}_0\left[S\cap (D_{j}^k\times E_{l}^k)\cap(\Xcpt^{k}\times\Ycpt^{k}) \right]\\
&\hspace{10em}- e^{f_n^k(x_{j,n}^{k})\wedge C +g_n^k(y_{l,n}^{k})\wedge C}\tilde{\pi}_n\left[S\cap (D_{j}^k\times E_{l}^k)\cap(\Xcpt^{k}\times\Ycpt^{k})\right]\Big|\\
&\stackrel{\eqref{eq:add}}{\le} 4\eps + \sum_{j\in\cJ^{k}}\sum_{l\in\cL^{k}} \Big|  e^{a_j^k\wedge C+b_l^{k}\wedge C}\tilde{\pi}_0\left[S\cap (D_{j}^k\times E_{l}^k) \right]\\
&\hspace{10em} - e^{f_n^k(x_{j,n}^{k})\wedge C +g_n^k(y_{l,n}^{k})\wedge C}\tilde{\pi}_n\left[S\cap (D_{j}^k\times E_{l}^k) \right]\Big|.
\end{split}
\end{align*}
In view of~\eqref{eq:la1} and~\eqref{eq:la2}, taking $n\to\infty$ yields
$$
\limsup_{n\to \infty} \Big|\pi^{k,C}(S)- \int_S e^{F_n^k(x)\wedge C+G_n^k(y)\wedge C-c(x,y)}\,\mu_n(dx)\,\nu_n(dy) \Big|\le 4\eps.  \qedhere
$$
\end{proof}

\begin{step}\label{st:step9}
In Step~\ref{st:step8} we have shown that 
$$
  \pi(dx,dy):=e^{f(x)+g(y)-c(x,y)} \,\mu(dx)\,\nu(dy)
$$
defines a coupling of $\mu,\nu$. Moreover, Step~\ref{st:step7} and~\eqref{eq:posBddL1Cond} imply that $f^{+}\in L^{1}(\mu)$ and $g^{+}\in L^{1}(\nu)$. 
By the general verification result in Proposition~\ref{pr:duality}, the form of $\pi$ with $(f\oplus g)^{+}\in L^{1}(\mu\otimes\nu)$ implies that~$(f,g)\in L^{1}(\mu)\times L^{1}(\nu)$, that $\cC(\mu,\nu)<\infty$ and that $\pi=\pi_{*}$ is the unique minimizer for the entropic optimal transport problem~\eqref{eq:EOT}. 
It follows that $\pi_{n}\to\pi_{*}$ also holds along the original sequence, completing the proof of Theorem~\ref{thm:main}\,(i).

The potentials corresponding to~$\pi_{*}$ are a.s.\ uniquely determined up to an additive constant  (Proposition~\ref{pr:existence}) and we have seen in Step~\ref{st:step7} that $\alpha_{n}=\int \arctan(f_{n})\,d\mu_{n}\to\int \arctan(f)\,d\mu$. As 
$\int \arctan(f+a)\,d\mu$ is strictly increasing in $a\in\R$, it follows that $(f,g)$ are the unique potentials with normalization $\int \arctan(f)\,d\mu=\alpha$. In particular, $(f,g)$ do not depend on the subsequence chosen in Step~\ref{st:step4} and the proof of Theorem~\ref{thm:main}\,(ii) is complete.
\end{step}

\begin{step}\label{st:step10} It remains to prove Theorem~\ref{thm:main}\,(iii).
Let~\eqref{eq:uniform_int} hold. Passing to a subsequence, we may assume that $\alpha_{n}\to\alpha$ and $f,g$ are as in~Theorem~\ref{thm:main}\,(ii).
 We first show the upper semicontinuity
\begin{align}\label{eq:UIlimitUSC}
\limsup_{n\to \infty}\int f_n\,d\mu_n\le \int f\,d\mu, \qquad \limsup_{n\to \infty} \int g_n\,d\nu_n\le \int g\,d\mu.
\end{align}
Indeed, the weak convergence~\eqref{eq:claimConvInDistrib} and the uniform integrability~\eqref{eq:uniform_int} imply that $\lim_{n\to \infty}\int f^{+}_n\,d\mu_n=\int f^{+}\,d\mu$. Together with Portmanteau's theorem  for $(f^{-}_n)$, the first part of~\eqref{eq:UIlimitUSC} follows, and similarly for the second.

Next, we argue the lower semicontinuity of the sum,
\begin{align}\label{eq:lowercont}
\liminf_{n\to \infty} \left(\int f_n\,d\mu_n +\int g_n\,d\nu_n \right)&\ge \int f\,d\mu +\int g\,d\nu.
\end{align}
By~\eqref{eq:posBddL1Cond} and Proposition~\ref{pr:duality}, we have the duality
\begin{align*}
\inf_{\pi\in\Pi(\mu_n,\nu_n)}\int c(x,y)\,\pi(dx,dy)+H(\pi| \mu_n\otimes \nu_n)= \int f_n\,d\mu_n +\int g_n\,d\nu_n .
\end{align*}
Similarly, $(f,g)\in L^{1}(\mu)\times L^{1}(\nu)$ implies the duality for the limiting problem. As a consequence, it suffices to argue lower semicontinuity in the primal problem. Let $\pi_{n}=\argmin_{\pi\in\Pi(\mu_n,\nu_n)}\int c(x,y)\,\pi(dx,dy)+H(\pi| \mu_n\otimes \nu_n)$. Then $(\pi_{n})_{n\in\N}$ is tight and any weak cluster point belongs to~$\Pi(\mu,\nu)$. Using the lower semicontinuity of $(\pi',\mu',\nu')\mapsto \int c \,d\pi' +H(\pi'| \mu'\otimes \nu')$, cf.\ \cite[Lemma~1.3]{Nutz.20}, we deduce
\begin{align*}
\liminf_{n\to \infty} \left( \int f_n\,d\mu_n +\int g_n\,d\nu_n \right)
&=\liminf_{n\to \infty} \left( \inf_{\pi\in\Pi(\mu_n,\nu_n)}\int c\,d\pi+H(\pi| \mu_n\otimes \nu_n)\right) \\
& =\liminf_{n\to \infty} \int c\,d\pi_{n}+H(\pi_{n}| \mu_{n}\otimes \nu_{n})\\
& \ge \inf_{\pi\in \Pi(\mu,\nu)} \int c\,d\pi+H(\pi| \mu\otimes \nu) \\
&= \int f\,d\mu +\int g\,d\nu.
\end{align*}

Together, the lower semicontinuity~\eqref{eq:lowercont} of the sum and the separate upper semicontinuity~\eqref{eq:UIlimitUSC} imply~\eqref{eq:meansConverge}.
This completes the proof of Theorem~\ref{thm:main}. \qed
\end{step}

\appendix
\section{Background on Entropic Optimal Transport}\label{se:background}

Let $(\mu,\nu)\in\cP(\X)\times\cP(\Y)$ and let $c:\X\times \Y\to [0,\infty)$ be measurable. We have the following result on existence and uniqueness for the entropic optimal transport problem~\eqref{eq:EOT}. 

\begin{proposition}\label{pr:existence}
  If $\cC(\mu,\nu)<\infty$, there is a unique minimizer $\pi_{*}\in\Pi(\mu,\nu)$ for~\eqref{eq:EOT}. Moreover, $\pi_{*}\sim \mu\otimes\nu$ and there are measurable functions $f: \X\to\R$, $ g: \Y\to\R$, called potentials, such that
  $$
    \frac{d\pi_{*}}{d(\mu\otimes\nu)} = e^{f\oplus g -c }\quad \mu\otimes\nu\as
  $$
  The potentials are a.s.\ unique up to an additive constant: if $f', g'$ are potentials, then $f'=f+a$ $\mu$-a.s.\ and $g'= g-a$ $\nu$-a.s.\ for some~$a\in\R$. 
  
  If $c\in L^{1}(\mu\otimes\nu)$, then $\cC(\mu,\nu)<\infty$ and $(f, g)\in L^{1}(\mu)\times L^{1}(\nu)$.
\end{proposition}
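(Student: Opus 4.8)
My plan is to proceed in four steps: (a) existence and uniqueness of $\pi_*$ by a convexity/compactness argument; (b) the product structure of its density via a first-order optimality condition; (c) uniqueness of the potentials up to an additive constant; (d) the integrability claim when $c\in L^1(\mu\otimes\nu)$. For (a), I would note that $c$ is finite-valued, so $Z:=\int e^{-c}\,d(\mu\otimes\nu)\in(0,1]$ and $R:=Z^{-1}e^{-c}\,\mu\otimes\nu$ is a probability measure with $R\sim\mu\otimes\nu$; a direct computation gives $\int c\,d\pi+H(\pi|\mu\otimes\nu)=H(\pi|R)-\log Z$ for every $\pi$, so~\eqref{eq:EOT} amounts to the information projection of $R$ onto $\Pi(\mu,\nu)$. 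Since $\X,\Y$ are Polish and the marginals are fixed, $\Pi(\mu,\nu)$ is convex and weakly compact (tightness plus Prokhorov); $H(\,\cdot\,|R)$ is convex, strictly convex on its domain, and weakly lower semicontinuous; and $\cC(\mu,\nu)<\infty$ provides a competitor of finite value. Hence a unique minimizer $\pi_*$ exists, and $H(\pi_*|R)<\infty$ forces $\pi_*\ll R\sim\mu\otimes\nu$.

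For (b), write $h:=d\pi_*/d(\mu\otimes\nu)$. I would first show $h>0$ $\mu\otimes\nu$-a.e.: otherwise, transferring an infinitesimal amount of mass into $\{h=0\}$ in a marginal-preserving way strictly lowers $H(\,\cdot\,|R)$, because $t\mapsto t\log t$ has slope $-\infty$ at $0$ while $c<\infty$ a.e.\ (such a transfer exists because $\int h(x,\cdot)\,d\nu=1$ for $\mu$-a.e.\ $x$, so the sections $\{y:h(x,y)=0\}$ are $\nu$-non-full), contradicting optimality. Next I would derive the first-order condition: for bounded measurable $\rho$ on $\X\times\Y$ supported on $\{h\ge\epsilon,\ c\le 1/\epsilon\}$ with vanishing marginals, i.e.\ $\int\rho(x,\cdot)\,d\nu=0$ $\mu$-a.e.\ and $\int\rho(\cdot,y)\,d\mu=0$ $\nu$-a.e., the curve $\pi_t:=\pi_*+t\,\rho\,(\mu\otimes\nu)$ lies in $\Pi(\mu,\nu)$ for small $|t|$, so $\tfrac{d}{dt}\big|_{t=0}H(\pi_t|R)=\int(\log h+c)\,\rho\,d(\mu\otimes\nu)=0$ (the constants $\log Z$, $+1$ drop out since $\int\rho=0$). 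Letting $\epsilon\downarrow0$ and using the standard description of the annihilator of the zero-marginal perturbations --- which by the decomposition $L^2(\mu\otimes\nu)=(\R 1\oplus L^2_0(\mu))\otimes(\R 1\oplus L^2_0(\nu))$, or by an explicit ``rectangle'' computation, is exactly $\{(x,y)\mapsto f(x)+g(y)\}$ --- I would conclude $\log h+c=f\oplus g$ $\mu\otimes\nu$-a.e.\ for measurable $f,g$, which since $h>0$ a.e.\ may be taken real-valued; then $\pi_*\sim\mu\otimes\nu$ and $d\pi_*/d(\mu\otimes\nu)=e^{f\oplus g-c}$. I expect the hard part to be exactly this step: combining the positivity of $h$ with a first-order condition on a set with empty interior, and making the orthogonality argument rigorous for a merely measurable (not necessarily square-integrable) integrand.

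For (c): if $(f',g')$ are also potentials, then $f(x)+g(y)=f'(x)+g'(y)$ for $\mu\otimes\nu$-a.e.\ $(x,y)$, and fixing a single $y$ for which this holds $\mu$-a.e.\ gives $f-f'\equiv a:=g'(y)-g(y)$ $\mu$-a.s., whence $g-g'\equiv-a$ $\nu$-a.s. For (d), assume $c\in L^1(\mu\otimes\nu)$; testing~\eqref{eq:EOT} with the product coupling gives $\cC(\mu,\nu)\le\int c\,d(\mu\otimes\nu)<\infty$, so by (a)--(b) one has $\pi_*=e^{f\oplus g-c}\,\mu\otimes\nu$ with $H(\pi_*|\mu\otimes\nu)\le\cC(\mu,\nu)<\infty$. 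Then $(f\oplus g)^+=(c+\log h)^+\le c+(\log h)^+$, and since $\int(\log h)^+\,d(\mu\otimes\nu)\le H(\pi_*|\mu\otimes\nu)+e^{-1}<\infty$ (using $\log h\le h\log h$ on $\{h\ge1\}$ and $-t\log t\le e^{-1}$ on $(0,1)$), we get $(f\oplus g)^+\in L^1(\mu\otimes\nu)$; the verification result (Proposition~\ref{pr:duality}) then upgrades this to $(f,g)\in L^1(\mu)\times L^1(\nu)$. Alternatively one argues directly from the Schr\"odinger equation $e^{-f(x)}=\int e^{g(y)-c(x,y)}\,\nu(dy)$ via Jensen's inequality, together with the identity $\int f\,d\mu+\int g\,d\nu=\cC(\mu,\nu)\ge0$.
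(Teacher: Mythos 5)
The paper does not actually prove this proposition: it is stated as background and delegated to \cite[Theorem~4.2]{Nutz.20}, so the comparison is really with the standard proofs in the literature (Csisz\'ar, R\"uschendorf--Thomsen, F\"ollmer). Your steps (a), (c) and (d) are sound: rewriting the problem as the $I$-projection of $R\propto e^{-c}\,d(\mu\otimes\nu)$ onto the convex, weakly compact set $\Pi(\mu,\nu)$ gives existence, uniqueness and $\pi_*\ll\mu\otimes\nu$; the Fubini argument gives uniqueness of the potentials up to a constant; and the bound $\int(\log h)^{+}\,d(\mu\otimes\nu)\le H(\pi_*|\mu\otimes\nu)+e^{-1}$ combined with Proposition~\ref{pr:duality}(b) handles the $L^{1}$ claim.

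The genuine gap is step (b), which is the heart of the proposition, and it sits exactly where you flag it. First, the positivity $h>0$ a.e.: the marginal-preserving transfer you invoke would have to be, in essence, a rectangle exchange $\rho=c_1\1_{A_1\times B_1}-c_2\1_{A_1\times B_2}-c_3\1_{A_2\times B_1}+c_4\1_{A_2\times B_2}$ whose negative part lives where $h$ is bounded away from $0$; but a positive-measure subset of a product space need not contain, modulo null sets, any rectangle of positive measure, so ``the sections of $\{h=0\}$ are $\nu$-non-full'' does not produce the required configuration, and the equivalence $\pi_*\sim\mu\otimes\nu$ is a genuinely nontrivial fact. Second, the annihilator step: the first-order condition is only available against bounded zero-marginal $\rho$ supported in $S_\epsilon=\{h\ge\epsilon,\ c\le 1/\epsilon\}$. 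The $L^2$ tensor decomposition identifies the annihilator of \emph{all} zero-marginal functions on the \emph{full} product, and only for square-integrable integrands; here the admissible perturbations are localized to $S_\epsilon$ (whose annihilator is far larger than $\{f\oplus g\}$) and $\log h+c$ is merely measurable. Passing from these localized conditions to a global identity $\log h+c=f\oplus g$ with a.e.\ finite $f,g$ is precisely the hard content of the theorem; the proofs in the literature do not take this route but instead use Csisz\'ar's projection inequality together with the closedness of the sum space $\{f\oplus g\}$ under a.e.\ convergence (R\"uschendorf--Thomsen), or approximate by bounded costs where the potentials are produced by Sinkhorn/fixed-point arguments and then passed to the limit. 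As written, your proposal identifies the correct objects but does not establish the product form of the density or the equivalence $\pi_*\sim\mu\otimes\nu$.
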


See  \cite[Theorem~4.2]{Nutz.20} for a proof. Conversely, the next result shows that the form of the density characterizes the minimizer. We also include the duality relation.
  
\begin{proposition}\label{pr:duality}
Let $\pi_{0}\in\Pi(\mu,\nu)$ admit a density of the form
$$
  \frac{d\pi_{0}}{d(\mu\otimes\nu)} = e^{f_{0}\oplus g_{0} -c}\quad \mu\otimes\nu\as
$$
for some measurable functions $f_{0}: \X\to\R$ and $g_{0}: \Y\to\R$. 
\begin{itemize}
\item[(a)] If $\cC(\mu,\nu)<\infty$, then $\pi_{0}$ is the minimizer~$\pi_{*}$ and $f_{0}, g_{0}$ are its potentials. 

\item[(b)] If $(f_{0}\oplus g_{0})^{+}\in L^{1}(\mu\otimes\nu)$, then necessarily $(f_{0}, g_{0})\in L^{1}(\mu)\times L^{1}(\nu)$ and
\begin{equation}\label{eq:duality}
  \cC(\mu,\nu) = \int f_{0}\,d\mu + \int g_{0}\,d\nu.
\end{equation}
  In particular, $\cC(\mu,\nu)<\infty$ and~(a) applies.
\end{itemize} 
\end{proposition}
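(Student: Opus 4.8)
\textbf{Proof plan for Proposition~\ref{pr:duality}.}

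The plan is to prove part (b) first by a direct variational argument, and then deduce part (a) from (b) together with the uniqueness already established in Proposition~\ref{pr:existence}. For part (b), the natural approach is to compute $H(\pi_0 \mid \mu\otimes\nu)$ explicitly and to show that $\pi_0$ attains the infimum in~\eqref{eq:EOT}. Since $\log\frac{d\pi_0}{d(\mu\otimes\nu)} = f_0\oplus g_0 - c$, we have formally $\int c\,d\pi_0 + H(\pi_0\mid\mu\otimes\nu) = \int c\,d\pi_0 + \int (f_0\oplus g_0 - c)\,d\pi_0 = \int (f_0\oplus g_0)\,d\pi_0 = \int f_0\,d\mu + \int g_0\,d\nu$, where the last equality uses that $\pi_0\in\Pi(\mu,\nu)$. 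The first task is therefore to make this chain of equalities rigorous: one needs $f_0\oplus g_0$ to be $\pi_0$-integrable (equivalently $\mu\otimes\nu$-integrable against the density, but the density is $e^{f_0\oplus g_0-c}$, so integrability of $f_0\oplus g_0$ under $\pi_0$ is what matters), and one needs to know that $H(\pi_0\mid\mu\otimes\nu)$ is well-defined and finite rather than $+\infty$ or ill-posed. This is where the hypothesis $(f_0\oplus g_0)^+\in L^1(\mu\otimes\nu)$ enters.

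The key steps, in order: (1) Observe $\int (f_0\oplus g_0)^+\,d\pi_0 \le \int (f_0\oplus g_0)^+ e^{(f_0\oplus g_0)^+}\,d(\mu\otimes\nu)$; since $te^t$ is controlled by $e^{2t}$ and... — more cleanly, use that $\frac{d\pi_0}{d(\mu\otimes\nu)}=e^{f_0\oplus g_0-c}\le e^{(f_0\oplus g_0)^+}$ and that the map $s\mapsto s^+$ composed with $e^{(\cdot)^+}$ bounds $s^+ e^{s}$ up to constants, to conclude $(f_0\oplus g_0)^+ \in L^1(\pi_0)$ from $(f_0\oplus g_0)^+\in L^1(\mu\otimes\nu)$ together with the elementary inequality $u \le e^u$. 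Actually the simplest route: $s^+ e^{-c}e^{f_0\oplus g_0} \le s^+ e^{s^+}$ and $s^+e^{s^+}\le e^{2s^+}$ for $s\ge 0$, hence reduce to requiring $e^{2(f_0\oplus g_0)^+}\in L^1(\mu\otimes\nu)$ — but that is stronger than assumed, so instead I would argue that $H(\pi_0\mid\mu\otimes\nu) = \int(f_0\oplus g_0-c)\,d\pi_0$ is well-defined in $(-\infty,+\infty]$ because its negative part $(f_0\oplus g_0-c)^- \le (f_0\oplus g_0)^- + c$ and... the cleanest is to bound $\int (f_0\oplus g_0)^-\,d\pi_0$ using the fact that $x\log x \ge -e^{-1}$, so the negative part of $\int \log\frac{d\pi_0}{d(\mu\otimes\nu)}\,d\pi_0$ is automatically integrable, giving that $H(\pi_0\mid\mu\otimes\nu)\in(-e^{-1},+\infty]$ is always well-defined. (2) Combined with $(f_0\oplus g_0)^+\in L^1(\mu\otimes\nu)$ and $c\ge 0$, show $H(\pi_0\mid\mu\otimes\nu)<\infty$, hence $\cC(\mu,\nu)<\infty$, hence Proposition~\ref{pr:existence} applies and a unique minimizer $\pi_*\sim\mu\otimes\nu$ with potentials exists. (3) Justify the splitting $\int(f_0\oplus g_0)\,d\pi_0 = \int f_0\,d\mu+\int g_0\,d\nu$, which requires $f_0\in L^1(\mu)$ and $g_0\in L^1(\nu)$ separately; this separation is the delicate point, since a priori we only control the sum. (4) Verify $\pi_0$ is optimal by a convexity/Jensen argument: for any $\pi\in\Pi(\mu,\nu)$ with $H(\pi\mid\mu\otimes\nu)<\infty$, write $H(\pi\mid\mu\otimes\nu) = H(\pi\mid\pi_0) + \int\log\frac{d\pi_0}{d(\mu\otimes\nu)}\,d\pi = H(\pi\mid\pi_0) + \int(f_0\oplus g_0-c)\,d\pi$, so that $\int c\,d\pi + H(\pi\mid\mu\otimes\nu) = H(\pi\mid\pi_0) + \int f_0\,d\mu + \int g_0\,d\nu \ge \int f_0\,d\mu + \int g_0\,d\nu$ with equality iff $\pi=\pi_0$ (using $H(\pi\mid\pi_0)\ge 0$ and the integrability just established to make the additivity $\int(f_0\oplus g_0)\,d\pi = \int f_0\,d\mu+\int g_0\,d\nu$ legitimate for such $\pi$). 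This yields both $\cC(\mu,\nu) = \int f_0\,d\mu+\int g_0\,d\nu$ and optimality of $\pi_0$. (5) For part (a): if $\cC(\mu,\nu)<\infty$, Proposition~\ref{pr:existence} gives the unique minimizer $\pi_*$ with its own potentials; the verification computation of step (4) shows $\pi_0$ is also a minimizer, so $\pi_0=\pi_*$ by uniqueness, and then $f_0,g_0$ are potentials of $\pi_*$ by definition.

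The main obstacle I anticipate is step (3) — showing that $f_0\in L^1(\mu)$ and $g_0\in L^1(\nu)$ individually, not merely $f_0\oplus g_0\in L^1(\pi_0)$ or $L^1(\mu\otimes\nu)$. The trick is the standard disintegration argument: fix $y_0$ in a set of positive $\nu$-measure where $g_0(y_0)$ is finite and $\int|f_0(x)+g_0(y_0)|$-type quantities are controlled — more precisely, since $(f_0\oplus g_0)^+\in L^1(\mu\otimes\nu)$, by Fubini there is a $\nu$-positive set of $y$ with $\int (f_0 + g_0(y))^+\,d\mu(x)<\infty$, and for such a $y$ this forces $f_0^+\in L^1(\mu)$; symmetrically $g_0^+\in L^1(\nu)$. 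Then the duality inequality $\int f_0\,d\mu + \int g_0\,d\nu \ge$ (something $>-\infty$) — concretely $\ge \cC(\mu,\nu)\cdot 0$... rather, one uses $H(\pi_0\mid\mu\otimes\nu)\ge 0$ is false in general, but $\int c\,d\pi_0 + H(\pi_0\mid\mu\otimes\nu)\ge 0$ does hold since the relative entropy term dominates, or more simply the whole objective is $\ge \cC(\mu,\nu) > -\infty$; combined with $f_0^+\in L^1(\mu), g_0^+\in L^1(\nu)$ and the a.s.\ finiteness of the sum this pins down $f_0^-\in L^1(\mu)$ and $g_0^-\in L^1(\nu)$ as well. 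Once this integrability bookkeeping is done, everything else is the routine Jensen/relative-entropy computation in step (4), which is standard in the entropic OT literature.
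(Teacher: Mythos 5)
Your plan for part (b) is essentially the standard verification argument and is sound once reorganized: the Fubini step that you relegate to the end (picking $y$ with $g_0(y)$ finite and $\int(f_0+g_0(y))^+\,d\mu<\infty$ to conclude $f_0^+\in L^1(\mu)$, and symmetrically $g_0^+\in L^1(\nu)$) must come \emph{first}, because it is what makes every subsequent integral well-defined. Indeed, $(f_0\oplus g_0)^+\le f_0^+\oplus g_0^+$ is then integrable under every $\pi\in\Pi(\mu,\nu)$ via the marginals, so $H(\pi_0|\mu\otimes\nu)=\int(f_0\oplus g_0-c)\,d\pi_0\le\int f_0^+\,d\mu+\int g_0^+\,d\nu<\infty$; since this quantity is also $\ge0$ (relative entropy between probability measures is \emph{always} nonnegative by Jensen --- your aside that ``$H(\pi_0|\mu\otimes\nu)\ge0$ is false in general'' is itself false, though harmless here), one reads off $c\in L^1(\pi_0)$, $f_0\oplus g_0\in L^1(\pi_0)$, and hence $f_0\in L^1(\mu)$, $g_0\in L^1(\nu)$. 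Your step (4) then closes (b). This matches the argument in the reference the paper cites for this proposition.

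The genuine gap is part (a). You propose to deduce it from the verification computation of step (4), but that computation is only available under the hypothesis of (b): part (a) assumes \emph{only} $\cC(\mu,\nu)<\infty$ and the product form of the density, with no integrability of $f_0\oplus g_0$. Without $f_0^+\in L^1(\mu)$ and $g_0^+\in L^1(\nu)$ you can neither write $\int(f_0\oplus g_0)\,d\pi=\int f_0\,d\mu+\int g_0\,d\nu$ nor make sense of the decomposition $H(\pi|\mu\otimes\nu)=H(\pi|\pi_0)+\int(f_0\oplus g_0-c)\,d\pi$, so you cannot conclude that $\pi_0$ is a minimizer. Part (a) is really the statement that $\pi_*$ is the \emph{unique} element of $\Pi(\mu,\nu)$ whose density has the form $e^{h_1\oplus h_2-c}$, and its proof is a different argument: writing $d\pi_0/d\pi_*=e^{u\oplus v}$ with $u=f_0-f$, $v=g_0-g$, one wants
$$
H(\pi_0|\pi_*)+H(\pi_*|\pi_0)=\int(u\oplus v)\,d(\pi_0-\pi_*)=0
$$
because $\pi_0$ and $\pi_*$ share their marginals, whence both entropies vanish and $\pi_0=\pi_*$; but since $u\oplus v$ need not be integrable under either measure, this identity must be justified by a truncation or approximation argument. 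That step is absent from your proposal and is not routine --- it is precisely the delicate point of the uniqueness claim.
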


See \cite[Theorem~4.2, Theorem~4.7, Remark~4.8]{Nutz.20}. 
When $f,g$ are potentials as in Proposition~\ref{pr:existence}, the fact that $\pi_{*}\in\Pi(\mu,\nu)$ implies the so-called Schr\"odinger equations
\begin{align}\label{eq:SE}
\begin{split}
    f(x)&= -\log \int_{\Y} e^{g(y)-c(x,y)}\, \nu(dy) \quad \mu\mbox{-a.s.},\\
    g(y) &= -\log \int_{\X} e^{f(x)-c(x,y)} \, \mu(dx) \quad \nu\mbox{-a.s.}
\end{split}
\end{align} 
We may choose versions of the potentials such that these relations hold without exceptional sets.

\section{Uniform Stability under Strong Integrability}\label{se:unifStability}

The following result shows that stability of the potentials, even uniformly on compacts, can be obtained quite easily when the cost is sufficiently integrable. As discussed in the Introduction, the integrability condition is \emph{not} satisfied in the regime of principal interest. For the statement, we choose versions of the potentials such that~\eqref{eq:SE} holds without exceptional sets.

\begin{proposition}\label{pr:unifStability}
For $n\in \N$, let $(\mu_n, \nu_n)\in \mathcal{P}(\mathcal{X})\times \mathcal{P}(\mathcal{Y})$ satisfy $\cC(\mu_{n},\nu_{n})<\infty$ and let $(f_{n},g_{n})$ be corresponding potentials. Suppose that $\mu_n,\nu_n$ converge weakly to~$\mu,\nu$ and
\begin{align}\label{eq:unifStabilityCond}
\sup_{n\in\N} \int e^{\beta f_n}\,d\mu_n<\infty, \qquad \sup_{n\in\N} \int e^{\beta g_n}\,d\nu_n<\infty \qquad \mbox{
for some $\beta> 1$.}
\end{align}
Then $f_{n}\oplus g_{n}\to f\oplus g$ uniformly on compacts, where $(f,g)$ are arbitrary potentials for~$(\mu,\nu)$, and the corresponding optimal couplings converge weakly.
If $\alpha_{n},\alpha,f,g$ are as in~Theorem~\ref{thm:main}\,(ii), then also $f_n\to f$ and $g_n\to g$, uniformly on compacts.
\end{proposition}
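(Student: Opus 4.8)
The plan is to exploit the strong integrability condition~\eqref{eq:unifStabilityCond} to run a direct Arzel\`a--Ascoli argument on the potentials, bypassing the elaborate approximation scheme of Theorem~\ref{thm:main}. First I would normalize, say by centering $\int f_n\,d\mu_n = 0$ (or matching the $\arctan$-normalization at the end), and observe that by Jensen's inequality applied to the Schr\"odinger equation~\eqref{eq:SE}, $\int g_n\,d\nu_n \geq -\log\int e^{-c}\,d(\mu_n\otimes\nu_n)$, so the $L^1$-norms are controlled; together with~\eqref{eq:unifStabilityCond} this gives uniform control of $\int e^{\beta f_n}\,d\mu_n$ and $\int e^{\beta g_n}\,d\nu_n$. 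The key consequence I would extract is a \emph{pointwise} bound: rewriting $f_n(x) = -\log\int e^{g_n(y)-c(x,y)}\,\nu_n(dy)$, H\"older's inequality with exponents $\beta$ and $\beta/(\beta-1)$ bounds $\int e^{g_n - c(x,\cdot)}\,d\nu_n$ above by $\big(\int e^{\beta g_n}\,d\nu_n\big)^{1/\beta}\big(\int e^{-\frac{\beta}{\beta-1} c(x,y)}\,d\nu_n\big)^{(\beta-1)/\beta}$, which since $c\geq 0$ is at most $\big(\sup_n \int e^{\beta g_n}\,d\nu_n\big)^{1/\beta}$, giving a uniform \emph{lower} bound $f_n(x) \geq -M$ for all $x$ and $n$. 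The matching upper bound on $f_n$ on a compact set $K$ comes from tightness: choose a compact $\Ycpt$ with $\inf_n \nu_n(\Ycpt) \geq 1/2$, then $f_n(x) \leq -\log\int_{\Ycpt} e^{g_n - c(x,y)}\,d\nu_n \leq \sup_{y\in\Ycpt} c(x,y) - \log\int_{\Ycpt} e^{g_n}\,d\nu_n$, and the last integral is bounded below uniformly using $e^{g_n}\geq 1 + g_n$ and the $L^1$ bound on $g_n^-$. Thus $(f_n)$ (and symmetrically $(g_n)$) is uniformly bounded on compacts.

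Next I would establish equicontinuity. From the Schr\"odinger equation, for $x_1, x_2$ in a compact set,
\[
|f_n(x_1) - f_n(x_2)| = \Big|\log \frac{\int e^{g_n(y)-c(x_1,y)}\,d\nu_n}{\int e^{g_n(y)-c(x_2,y)}\,d\nu_n}\Big| \leq \sup_y |c(x_1,y) - c(x_2,y)|,
\]
but since $c$ need not be uniformly continuous in $y$ globally, I would truncate to the tightness set $\Ycpt$ at the cost of an error controlled by the tail mass weighted by $e^{g_n - c}$, which is uniformly small by the H\"older estimate above combined with uniform integrability of $e^{g_n}$ against $\nu_n$ (a consequence of~\eqref{eq:unifStabilityCond} with $\beta>1$, via de la Vall\'ee-Poussin). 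This yields a uniform modulus of continuity for $(f_n)$ on each compact set, hence by Arzel\`a--Ascoli a locally uniformly convergent subsequence $f_{n_l}\to \bar f$; symmetrically $g_{n_l}\to\bar g$. Then I would pass to the limit in the Schr\"odinger equations: $e^{g_{n_l} - c(x,\cdot)}$ converges pointwise and is dominated on $\Ycpt$, and the tails are uniformly negligible, so $\bar f(x) = -\log\int e^{\bar g(y) - c(x,y)}\,d\nu(y)$ and symmetrically for $\bar g$; moreover $e^{\bar f\oplus\bar g - c}\,d(\mu\otimes\nu)$ is the weak limit of $\pi_{n_l}$ by dominated/truncation arguments, hence a coupling of $(\mu,\nu)$. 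By Proposition~\ref{pr:duality}(b), since $\bar f, \bar g$ are bounded on compacts and (via the pointwise bounds) have integrable positive parts, $\cC(\mu,\nu)<\infty$ and $(\bar f,\bar g)$ are potentials for $(\mu,\nu)$; uniqueness of $\bar f\oplus\bar g$ (up to the normalization, handled exactly as in Step~\ref{st:step9} using strict monotonicity of $a\mapsto\int\arctan(f+a)\,d\mu$) shows the limit is independent of the subsequence, so the full sequence converges. Finally, the convergence of the couplings and the claim about $f_n, g_n$ under the $\arctan$-normalization follow as in Step~\ref{st:step9}, using that $\int\arctan(f_{n_l})\,d\mu_{n_l}\to\int\arctan(\bar f)\,d\mu$ by uniform convergence on compacts plus tightness.

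The main obstacle I anticipate is the lack of global (uniform) continuity and boundedness of $c$: every estimate that would be immediate for bounded cost — equicontinuity of $f_n$, passing limits through the $\log\int e^{g_n-c}$ integrals, identifying the limiting coupling — must be localized to a tightness set $\Ycpt$ with the tail contribution controlled. The workhorse making this possible is the H\"older bound $\int e^{g_n - c(x,y)}\1_{\{y\notin\Ycpt\}}\,d\nu_n \leq \big(\sup_n\int e^{\beta g_n}\,d\nu_n\big)^{1/\beta}\,\nu_n(\Ycpt^c)^{(\beta-1)/\beta}$, which is why the hypothesis $\beta>1$ (rather than merely $\beta>0$) is exactly what is needed; this is the crucial quantitative point, and getting the truncation errors to be uniform in $n$ is where the care lies. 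Everything else is a fairly routine Arzel\`a--Ascoli plus verification-lemma argument.
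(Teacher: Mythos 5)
Your proposal follows essentially the same route as the paper's Appendix~B proof: a uniform pointwise lower bound on the potentials from the Schr\"odinger equations and H\"older with $\beta>1$ (the paper gets the same bound slightly more simply via $e^{-c}\le 1$ and Jensen), an upper bound from tightness/weak convergence, equicontinuity with the tail truncated to a tightness set, Arzel\`a--Ascoli, and identification of the limit through Proposition~\ref{pr:duality} plus uniqueness of $f\oplus g$. So the architecture is right, and your emphasis on the H\"older tail estimate as the place where $\beta>1$ enters is exactly the correct quantitative point.

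One sub-step is wrong as written. To bound $f_n$ from above on a compact you need $\int_{\Ycpt}e^{g_n}\,d\nu_n$ bounded away from $0$ uniformly in $n$, and you propose to get this from $e^{g_n}\ge 1+g_n$ together with an $L^1$ bound on $g_n^-$. That gives only $\int_{\Ycpt}e^{g_n}\,d\nu_n\ge \nu_n(\Ycpt)-\int g_n^-\,d\nu_n$, which can be negative: an $L^1$ bound on $g_n^-$ does not force it to be smaller than $1/2$. The repair is immediate from material you already have: the uniform pointwise lower bound $g_n\ge -\log M$ (your H\"older step, applied symmetrically) gives $\int_{\Ycpt}e^{g_n}\,d\nu_n\ge M^{-1}\nu_n(\Ycpt)\ge M^{-1}/2$. (This is in effect what the paper does: it feeds the uniform lower bound on $g_n$ back into the Schr\"odinger equation and uses $\int e^{-c(x,\cdot)}\,d\nu_n\to\int e^{-c(x,\cdot)}\,d\nu>0$ to get pointwise boundedness of $f_n$, which together with equicontinuity suffices for Arzel\`a--Ascoli.) A second, minor point: the initial re-centering $\int f_n\,d\mu_n=0$ is unnecessary and slightly at odds with the hypothesis, since \eqref{eq:unifStabilityCond} is stated for the given potentials and is not invariant under shifting $(f_n,g_n)$ by unbounded constants; it is cleaner to work with the potentials as given, exactly because the hypothesis already supplies the exponential moment bounds you need.
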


Similarly as in Lemma~\ref{le:suff_cond_for_bddL1}, a sufficient condition for~\eqref{eq:unifStabilityCond} is that
\begin{align}\label{eq:condition_cost_exp_int}
\sup_{n\in\N} \int e^{\beta c}\,d (\mu_n\otimes \nu_n)<\infty
\end{align}
and $(f_{n},g_{n})$ are normalized such that~\eqref{eq:negBddL1Cond} holds, for instance $\int f_{n}\,d\mu_{n}=0$.

\begin{proof}[Proof of Proposition~\ref{pr:unifStability}.]
We first show that $(f_{n})_{n\in\N}$ is pointwise bounded. 
Let $C=\sup_n (\int e^{\beta g_n}\,d\nu_n)^{1/\beta}$. As $c\geq0$, we have by~\eqref{eq:SE} that
\begin{align*}
   e^{-f_{n}(x)} &=\int e^{g_{n}(y)-c(x,y)}\,\nu_{n}(dy)
\le \int e^{g_{n}(y)}\,\nu_{n}(dy) \leq C.
\end{align*}
This yields the uniform lower bound $f_{n}\geq -\log C$, and similarly $g_{n}\geq -\log C$. 
Fix $x\in\X$. Using~\eqref{eq:SE} and the lower bound, 
\begin{align*}
e^{-f_n(x)}=  \int e^{g_n(y)-c(x,y)}\,\nu_n(dy)\ge C^{-1} \int e^{-c(x,y)}\,\nu_n(dy).
\end{align*}
As $e^{-c(x,\cdot)}$ is bounded and continuous and $\nu_n \to \nu$ weakly, $\int e^{-c(x,y)}\,\nu_n(dy)$ converges to $\int e^{-c(x,y)}\,\nu(dy)>0$. We conclude that $\sup_{n\in\N} f_{n}(x)<\infty$, completing the proof that $(f_{n})$ is pointwise bounded.

Next, we show that $(f_{n})$ is equicontinuous. On the strength of the pointwise boundedness, it suffices to show that $e^{-f_n}$ is equicontinuous. Fix a compatible metric on $\X$, let $x_{0}\in\X$ and $\eps>0$. Using tightness, choose a compact $K\subset\Y$ with $\nu_{n}(K^{c})<\eps$ for all $n$. Let $q\in(1,\infty)$ satisfy $1/\beta +1/q=1$. By H\"older's inequality,
\begin{align*}
|e^{-f_n(x_0)}-e^{-f_n(x)}| &=\left| \int e^{g_n(y)-c(x_0,y)}\,\nu_n(dy)-\int e^{g_n(y)-c(x,y)}\,\nu_n(dy)\right|\\
&\le \left(\int e^{\beta g_n(y)}\,\nu_n(dy)\right)^{1/\beta } \left(\int \left|e^{-c(x_0,y)}-e^{-c(x,y)}\right|^q\,\nu_n(dy)\right)^{1/q}\\
&\le C\left(\int \left|e^{-c(x_0,y)}-e^{-c(x,y)}\right|^q\,\nu_n(dy)\right)^{1/q}.
\end{align*}
Recalling that $e^{-c}\le 1$, the integral can be estimated by
\begin{align*}
\int \left|e^{-c(x_0,y)}-e^{-c(x,y)}\right|^q\,\nu_n(dy)\le \int_K \left|e^{-c(x_0,y)}-e^{-c(x,y)}\right|^q\,\nu_n(dy)+2\nu_n(K^c).
\end{align*}
As~$c$ is continuous and $K$ is compact, $\sup_{y\in K}\left|e^{-c(x_0,y)}-e^{-c(\cdot,y)}\right|$ is continuous. Therefore, $\int_K |e^{-c(x_0,y)}-e^{-c(x,y)} |^q\,\nu_n(dy)<\eps$ for $x\in B_{\delta}(x_{0})$, for $\delta>0$ sufficiently small, and we obtain the desired equicontinuity,
\begin{align*}
|e^{-f_n(x_0)}-e^{-f_n(x)}| \leq C ( \eps + 2\eps )^{1/q}, \qquad  x\in B_{\delta}(x_{0}).
\end{align*}

We have shown that  $(f_{n})$ is equicontinuous and pointwise bounded, and the same arguments hold for $(g_{n})$. Passing to a subsequence, the Arzel\`a--Ascoli theorem shows that $f_n\to f$ and $g_n\to g$ uniformly on compacts. 
Note that $(e^{f_{n}\oplus g_{n} -c})_{n}$ is $(\mu_{n}\otimes\nu_{n})_{n}$-uniformly integrable by~\eqref{eq:unifStabilityCond}. As $e^{f_{n}\oplus g_{n} -c}\to e^{f\oplus g -c}$ uniformly on compacts and $\mu_{n}\otimes\nu_{n}\to\mu\otimes\nu$ weakly, it follows for the optimal couplings~$\pi_{n}$ that
$$
  \pi_{n} = e^{f_{n}\oplus g_{n} -c}\,d(\mu_{n}\otimes\nu_{n})\,\to\, e^{f\oplus g -c}\,d(\mu\otimes\nu)=:\pi_{0} \qquad\mbox{weakly.}
$$
In particular, $\pi_{0}\in\Pi(\mu,\nu)$. Proposition~\ref{pr:duality} now shows that $\pi_{0}$ is the optimal coupling for $(\mu,\nu)$ and $(f,g)$ are corresponding potentials. As $f\oplus g$ is unique (Proposition~\ref{pr:existence}), the claim for the original sequence follows.
\end{proof}

\begin{remark}\label{rk:unifStability}
  Proposition~\ref{pr:unifStability} extends to the boundary case $\beta=1$ if $e^{-c}$ is uniformly continuous. This holds in particular when~$c(x,y)=\|x-y\|^{2}$ on~$\R^{d}\times\R^{d}$, or more generally whenever~$c$ is coercive on~$\X\times\Y$.
  
  Following the above proof, the argument for pointwise boundedness still applies, and the argument for equicontinuity is even simpler under the additional hypothesis. The argument using uniform integrability may no longer be clear, but we can instead use Theorem~\ref{thm:main} to conclude that~$(f,g)$ must be potentials for~$(\mu,\nu)$.
\end{remark} 
\section{Omitted Proofs}\label{se:omittedProofs}

\begin{proof}[Proof of Lemma~\ref{le:suff_cond_for_bddL1}.]
(i) Let $\inf_{n\in\N} \int g_n\,d\nu_n\geq-C$ for $C\geq0$. By~\eqref{eq:SE} and Jensen's inequality,
\begin{align*}
f_n(x) &=-\log\left( \int e^{g_n(y)-c(x,y)}\,\nu_n(dy)\right) \\
&\le \int \left[ c(x,y)-g_n(y) \right]\,\nu_n(dy)\le C+ \int c(x,y)\,\nu_n(dy).
\end{align*}
Recalling $c\geq0$, this shows 
\begin{equation}\label{eq:proof_suff_cond_for_bddL1}
f_n^+(x)\le C+ \int c(x,y)\,\nu_n(dy),
\end{equation}
and now Tonelli's theorem yields
$
\int f_n^+ \,\mu_n \le C+ \int c \,d(\mu_n\otimes\nu_{n}).
$
The analogue holds for $g_{n}$. Thus~\eqref{eq:condition_cost_int} implies~\eqref{eq:posBddL1Cond}, and via \eqref{eq:negBddL1Cond} also~\eqref{eq:bddL1Cond}.

More generally, given a measurable set $A\subset\X$, \eqref{eq:proof_suff_cond_for_bddL1} also implies 
$$
\int_{A} f_n^+ \,\mu_n \le C\mu_{n}(A)+ \int_{A\times \Y} c \,d(\mu_n\otimes\nu_{n}).
$$
If $\mu_{n}(A)\leq\delta$, then $(\mu_n\otimes \nu_n)(A\times\Y)\leq \delta$, so  that the $\eps$--$\delta$ characterization of uniform integrability  yields the claim.

(ii) The variational representation of relative entropy \cite[Lemma~1.3]{Nutz.20} shows that 
  $$
     H(\mu_{n}\otimes\nu_{n}|\mu\otimes\nu) \geq \int \psi  \,d(\mu_{n}\otimes\nu_{n}) - \log  \int e^{\psi}\,d(\mu\otimes\nu)
  $$
  for any measurable function $\psi$ bounded from below.
  Choosing $\psi=\beta c$, we deduce 
  $$
     \int \beta c  \,d(\mu_{n}\otimes\nu_{n}) \leq 
     H(\mu_{n}\otimes\nu_{n}|\mu\otimes\nu)   + \log  \int e^{\beta c}\,d(\mu\otimes\nu).
  $$
  Noting that $H(\mu_{n}\otimes\nu_{n}|\mu\otimes\nu)=H(\mu_{n}|\mu)+H(\nu_{n}|\nu)$, the right-hand side is bounded under~\eqref{eq:condition_entropy}, so that~\eqref{eq:condition_cost_int} applies. To obtain the last claim, we replace~$c$ with $\phi(c)$ in the preceding argument and apply the la Vall\'ee--Poussin theorem.
\end{proof}

\begin{proof}[Proof of Lemma~\ref{le:TVconvImpliesCond}]
  Let $C>0$ and $A_{n}=\{\frac{d\mu}{d\mu_n}\ge C\}$. Then
  $
  \mu(A_{n})=\int_{A_{n}} \frac{d\mu}{d\mu_{n}}\,d\mu_{n}\ge C\mu_{n}(A_{n})
  $
  and thus $\mu_{n}(A_{n})\leq C^{-1}$ for all $n\in\N$. Writing
  $$
    \mu(A_{n}) = \mu(A_{n}) - \mu_{n}(A_{n})+\mu_{n}(A_{n}) \leq \|\mu - \mu_{n}\|_{TV} + C^{-1}
  $$
  then shows that $\limsup_{n}\mu(A_{n})\leq C^{-1}$. The claim follows.
\end{proof}

\begin{proof}[Proof of Corollary~\ref{co:TVstability}.]
  By Corollary~\ref{cor:abs_cont} and Lemma~\ref{le:TVconvImpliesCond}, we have $f_{n}\to f$ in $\mu$-probability and $g_{n}\to g$ in $\nu$-probability after passing to a subsequence. Consider the Lebesgue decomposition $\mu_{n}=\mu_{n}'+\mu_{n}''$ into $\mu_{n}'\ll\mu$ and $\mu_{n}''\bot\mu$. Then $\mu_{n}'\to\mu$ in total variation and hence $\frac{d\mu_{n}'}{d\mu}\to1$ in $L^{1}(\mu)$. This implies the convergence in $\mu$-probability of the reciprocal,
 and as $\frac{d\mu}{d\mu_{n}'}=\frac{d\mu}{d\mu_{n}}$ $\mu$-a.s., that
$\frac{d\mu}{d\mu_{n}}\to1$ in $\mu$-probability.
Similarly, $\frac{d\nu}{d\nu_{n}}\to1$ in $\nu$-probability.
Following the proof of the particular case in Section~\ref{se:mainResults} but writing the reciprocals,
 $$
   \frac{d(\mu\otimes\nu)}{d\pi_{n}} = \frac{d(\mu_{n}\otimes\nu_{n})}{d\pi_{n}}\frac{d\mu}{d\mu_{n}}\frac{d\nu}{d\nu_{n}} = e^{-(f_{n}\oplus  g_{n}-c)}\frac{d\mu}{d\mu_n}\frac{d\nu}{d\nu_n} 
   \,\to\, e^{-(f\oplus  g-c)} = \frac{d(\mu\otimes\nu)}{d\pi_{*}}
 $$
 in $\mu\otimes\nu$-probability. Consider the Lebesgue decomposition $\pi_{n}=\pi_{n}'+\pi_{n}''$ into $\pi_{n}'\ll\mu\otimes\nu$ and $\pi_{n}''\bot\mu\otimes\nu$. Then it follows that
 $$
   \frac{d\pi_{n}'}{d(\mu\otimes\nu)} = \left(\frac{d(\mu\otimes\nu)}{d\pi_{n}}\right)^{-1}\1_{\big\{\frac{d(\mu\otimes\nu)}{d\pi_{n}}\neq0\big\}} \to \frac{d\pi_{*}}{d(\mu\otimes\nu)}
 $$
 in $\mu\otimes\nu$-probability, which by Scheff\'e's lemma implies $\pi_{n}'\to\pi_{*}$ in total variation. As $\pi_{n},\pi_{*}$ are probability  measures, it follows that $\pi_{n}''(\X\times\Y)\to0$  and finally $\pi_{n}\to\pi_{*}$ in total variation.  The convergence of the original sequence follows.
\end{proof}

\begin{proof}[Proof of Lemma~\ref{lem:ext}.]
Fix $\eps\in (0,\delta)$, to be determined later. By Prokhorov's theorem, we can find compacts $\Xcpt$ and $\Ycpt$ with
$\inf_{n\in \N_0}\mu_n(\Xcpt)\ge 1-\eps^2/2$ and $\inf_{n\in \N_0}\nu_n(\Ycpt)\ge 1-\eps^2/2$. As  $\pi_{n}\in \Pi(\mu_n,\nu_n)$, it follows that
\begin{align}\label{eq:tightness}
\inf_{n\in \N}\pi_{n}(\Xcpt \times \Ycpt)\ge 1-\eps^2.
\end{align}
Fix $n\in \N$ and consider the set $$A_{n}=\left\{x\in \Xcpt:\ \int_{\Ycpt} e^{f_n(x)+g_n(y)-c(x,y)}\,\nu_n(dy) \ge 1-\eps\right\};$$
we claim that its complement satisfies
\begin{align}\label{eq:markov}
p_n:=\mu_n\left(A_{n}^c\right)\le \eps.
\end{align}
Indeed, \eqref{eq:SE} yields
\begin{align}\label{eq:exponential}
\int_{\Ycpt} e^{f_n(x)+g_n(y)-c(x,y)}\,\nu_n(dy)\le \int e^{f_n(x)+g_n(y)-c(x,y)}\,\nu_n(dy)=1
\end{align}
and thus
\begin{align*}
1-\eps^2 &\stackrel{\eqref{eq:tightness}}{\le}\pi_n(\Xcpt \times \Ycpt)= \int_{\Xcpt}\int_{\Ycpt} e^{f_n(x)+g_n(y)-c(x,y)} \,\nu_n(dy)\mu_n(dx)\\
&\le \int_{A_n^c}\int_{\Ycpt} e^{f_n(x)+g_n(y)-c(x,y)} \,\nu_n(dy)\mu_n(dx)\\
&\quad +\int_{A_n}\int_{\Ycpt} e^{f_n(x)+g_n(y)-c(x,y)} \,\nu_n(dy)\mu_n(dx)\\
&\stackrel{\eqref{eq:exponential}}{\le} (1-\eps)p_n+(1-p_n)=1-p_n\eps,
\end{align*}
which implies~\eqref{eq:markov}. Next, we observe from the definition of $A_n$ and \eqref{eq:exponential} that for $x\in A_n$,
\begin{align}\label{eq:ineq1}
-\left( \log \int_{\Ycpt} e^{g_n(y)-c(x,y)}\,\nu_n(dy)-\log(1-\eps)\right)&\le f_n(x) \nonumber\\
&\le 
-\log \int_{\Ycpt} e^{g_n(y)-c(x,y)}\,\nu_n(dy).
\end{align}
Let $x_1, x_2\in A_n$ and assume without loss of generality that $f_n(x_1)\ge f_n(x_2)$. Then
\begin{align}\label{eq:longest}
\begin{split}
\left|f_n(x_1)-f_n(x_2)\right|&\stackrel{\eqref{eq:ineq1}}{\le}  \left(\log \int_{\Ycpt} e^{g_n(y)-c(x_2,y)}\, \nu_n(dy)-\log(1-\eps)\right)\\
&\quad - \log \int_{\Ycpt} e^{g_n(y)-c(x_1,y)}\, \nu_n(dy) \\
&= \log \int_{\Ycpt} e^{c(x_1,y)-c(x_2,y)+g_n(y)-c(x_1,y)}\, \nu_n(dy) -\log(1-\eps)\\
&\qquad- \log \int_{\Ycpt} e^{g_n(y)-c(x_1,y)}\, \nu_n(dy)\\
&\le  \log \left( e^{\sup_{y\in \Ycpt} \left|c(x_1, y)-c(x_2,y)\right| } \int_{\Ycpt} e^{g_n(y)-c(x_1,y)}\, \nu_n(dy) \right) \\
&\qquad-\log(1-\eps)- \log \int_{\Ycpt} e^{g_n(y)-c(x_1,y)}\, \nu_n(dy)\\
&=\sup_{y\in \Ycpt} \left|c(x_1, y)-c(x_2,y)\right| -\log(1-\eps).
\end{split}
\end{align}
This concludes the proof of the first estimate in the lemma. 
Turning to the second, note that by \eqref{eq:tightness}, \eqref{eq:markov} and the definition of $A_n$,
\begin{align}\label{eq:nextround}
\begin{split}
\pi_n(A_n \times \Ycpt )
&\ge  \pi_n(\Xcpt \times \Ycpt )-\pi_n( \X\setminus A_n \times \Ycpt )\\
&\ge  1-\eps^2-\int_{A_n^c} \int_{\Y_{\text{cpt}}} e^{f_n(x)+g_n(y)-c(x,y)} \,\nu_n(dy)\,\mu_n(dx)\\
&\ge  1-\eps^2-\eps(1-\eps)
=1-\eps=1-\delta^2,
\end{split}
\end{align}
where we chose $\eps:=\delta^2$ (ensuring  $\eps \in (0,\delta)$, in particular). Define \begin{align*}
B_{n}=\left\{y\in \Ycpt:\ \int_{A_n} e^{f_n(x)+g_n(y)-c(x,y)}\,\mu_n(dx) \ge 1-\delta\right\}.
\end{align*}
Arguing as for~\eqref{eq:markov} and~\eqref{eq:ineq1}, now using \eqref{eq:nextround} instead of~\eqref{eq:tightness}, we see that $\nu_n(B_{n}^{c}) \le \delta$ and that for $y\in B_{n}$,
\begin{align*}%
\begin{split}
-\left( \log \int_{A_n} e^{f_n(x)-c(x,y)}\,\mu_n(dx)-\log(1-\delta)\right)&\le g_n(y)\\
&\le 
-\log \int_{A_n} e^{f_n(x)-c(x,y)}\,\mu_n(dx).
\end{split}
\end{align*}
We conclude the proof by arguing as in~\eqref{eq:longest} but with $f_n,\eps$ replaced by $g_n,\delta$.
\end{proof}

\newcommand{\dummy}[1]{}

\end{document}